\newcommand{\dn}{\mathord{\downarrow}\hspace{0.05em}}
\newtheorem{theorem}{Theorem}[section]
\newtheorem{lemma}[theorem]{Lemma}
\newtheorem{claim}[theorem]{Claim}
\newtheorem{thm1.6}[theorem]{Theorem \ref{thm.1.7}}
\newtheorem{thm1.7}[theorem]{Theorem \ref{thm.1.8}}
\theoremstyle{definition}
\newtheorem{definition}[theorem]{Definition}
\newtheorem{example}[theorem]{Example}
\theoremstyle{remark}
\newtheorem{remark}[theorem]{Remark}
\numberwithin{equation}{section}
\newcommand{\Fraisse}{Fra\"{\i}ss\'{e}}
\begin{document}

\title{Big Ramsey degrees in universal inverse limit structures}

\author{Natasha Dobrinen}
\address{Department of Mathematics, University of Denver, Denver 80208, USA}
\email{natasha.dobrinen@du.edu}
\thanks{Dobrinen was supported by  National Science Foundation Grants DMS-1600781 and  DMS-1901753.
Wang was supported by  the National Natural Science Foundation of
China (Grant nos. 11871320 and 11531009), Innovation Capability Support Program of Shaanxi (Program no. 2020KJXX-066)  and  the
Fundamental Research Funds for the Central Universities (Grant no. GK202102011).}

\author{Kaiyun Wang}
\address{School of Mathematics and Statistics, Shaanxi Normal University, Xi'an 710119, P.R. China}
\email{wangkaiyun@snnu.edu.cn}

\subjclass[2010]{Primary 05C15, 05D10; Secondary 05C69, 05C05, 03E02}

\date{}

\keywords{Universal inverse limit structure;\ Fra\"{\i}ss\'{e} class;\ Big Ramsey degree;\ Tree;\ Topological Ramsey
space}
\thanks{}
\begin{abstract}
We build a collection of  topological Ramsey spaces of trees giving rise to universal inverse limit structures,
extending Zheng's work for the profinite graph to the setting of
Fra\"{\i}ss\'{e} classes of finite ordered binary relational structures with the Ramsey property.
This work is
based on the Halpern-L\"{a}uchli  theorem, but different from the Milliken space of strong
subtrees.
Based on these topological Ramsey spaces and the work of Huber-Geschke-Kojman on
inverse limits of finite ordered graphs, we prove that for each such Fra\"{\i}ss\'{e} class,
its universal inverse limit structure  has finite big Ramsey degrees under  finite Baire-measurable colorings.
For such \Fraisse\ classes  satisfying free amalgamation  as well as
 finite ordered tournaments and finite partial orders with a linear extension,  we characterize  the exact big Ramsey degrees.
\end{abstract}

\maketitle

\vspace*{0.2cm}

\section{Introduction}

\vspace*{0.4cm}

Structural Ramsey theory originated at the beginning of the  1970's in a series of
papers (see \cite{JN95}). Given structures $\pmb{A}$ and $\pmb{B}$, let $\big(\mathop{}_{\pmb{A}}^{\pmb{B}}\big)$ denote the set of
 all copies of $\pmb{A}$ in  $\pmb{B}$.
 We write  $\pmb{C}\longrightarrow (\pmb{B})^{\pmb{A}}_{l,m}$ to denote the following property:
 For every finite coloring $c:  \big(\mathop{}_{\pmb{A}}^{\pmb{C}}\big)
\longrightarrow l$, there is $\pmb{B}^\prime\in \big(\mathop{}_{\pmb{B}}^{\pmb{C}}\big)$ such that $c$  takes no  more than $m$  colors on
$\big(\mathop{}_{\pmb{A}}^{\pmb{B^\prime}}\big)$.
Let $\mathcal{K}$ be  a class
 of structures.  The (small) Ramsey degree of $\pmb{A}$ in $\mathcal{K}$ is the  smallest positive integer $m$, if it exists,  such
that for every $\pmb{B}\in \mathcal{K}$ and every positive integer $l\geq 2$ there exists $\pmb{C}\in \mathcal{K}$ such that
$\pmb{C}\longrightarrow (\pmb{B})^{\pmb{A}}_{l,m}$.
The class $\mathcal{K}$ is said to be a {\em Ramsey class} if the Ramsey degree of every $A\in \mathcal{K}$ is
1. Ramsey classes are the main topic of interest of  structural Ramsey theory.
Many Ramsey classes are known.
Examples relevant for our
presentation include the classes of finite ordered graphs, finite ordered $k$-clique free graphs with
$k\geq 3$, finite ordered oriented graphs,  finite ordered tournaments, and finite partial orders with a linear extension.

Given an infinite structure $\pmb{S}$ and a finite substructure $\pmb{A}$,  the {\em big Ramsey degree}
of $\pmb{A}$ in $\pmb{S}$ is the  smallest positive integer $m$, if it exists, such that $\pmb{S}\longrightarrow (\pmb{S})^{\pmb{A}}_{l,m}$ for every $l\geq 2$.
Research on big Ramsey degrees has gained recent momentum
due to  the  seminal paper of  Kechris, Pestov, and Todor\v{c}evi\'{c} in \cite{AK05}, and the    results
 by Zucker in \cite{AZ19}
connecting big Ramsey degrees for countable structures with topological dynamics, answering a question in \cite{AK05}.

The history of big Ramsey degrees for countably infinite  structures has its beginnings in an example of Sierpi\'{n}ski, who constructed a 2-coloring of pairs of rationals such that every subset forming a dense linear order retains both colors.
Later, Galvin proved that for every finite coloring of pairs of rationals, there is a subset forming a dense linear order on which the coloring takes no more than two colors, thus proving that the big Ramsey degree for pairs of rationals is exactly two.
This line of work has developed over the decades, notably with Laver proving upper bounds for all finite sets of rationals, and culminating in  Devlin's calculations of the exact big Ramsey degrees for finite sets of rationals in
\cite{DD79}.

The area of big Ramsey degrees on countably infinite structures  has seen considerable growth
in the
 past two decades,
beginning notably with
 Sauer's proof in \cite{NS06} that every finite graph has finite big Ramsey degree in the Rado graph, which is  the
Fra\"{\i}ss\'{e} limit of the class of all the finite graphs, and the immediately following result of
Laflamme, Sauer and Vuksanovic  in \cite{CL06} characterizing  the exact big Ramsey degrees  of the Rado graph.
Other recent work on big Ramsey degrees of countable structures include
ultrametric spaces (Nguyen Van Th\'{e}, \cite{LN08}),
the dense local order
(Laflamme, Nguyen Van Th\'{e} and Sauer, \cite{CL10}),
the ultrahomogeneous $k$-clique free graphs
(Dobrinen,  \cite{ND17, ND19}),
and, very recently,   the following:
\cite{Balkoproofs}, \cite{Balko7},
\cite{CDP}, \cite{Hubicka20},
\cite{MasulovicFBRD18},
\cite{MasulovicRDBVS20},
\cite{ZuckerForb}.
For more background in this area, we refer the reader to the excellent Habilitation of Nguyen Van Th\'{e}
\cite{NVT}
and a more recent expository paper of the first author \cite{DobIfCoLog}.

Results on big Ramsey degrees for uncountable structures are  even more sparse than for countable structures.
Ramsey theorems for perfect sets  mark a beginning of this line of inquiry, and most of these theorems have at their core either the Milliken theorem (\cite{KR81}),
or the Halpern-L\"{a}uchli theorem (\cite {JH66}) on which Milliken's theorem is based.
  For example, Blass proved in \cite{AB81} the following partition theorem for perfect sets of $\mathbb{R}$, which was  conjectured by Galvin (see \cite{FG68}), who proved it for $n\leq 3$.

\begin{theorem}[Blass \cite{AB81}]\label{thm.Blass} For every perfect subset $P$ of $\mathbb{R}$ and every finite continuous
coloring of $[P]
^n$, there is a perfect set $Q\subseteq P$ such that $[Q]^n$ has at most $(n- 1)!$
colors.
\end{theorem}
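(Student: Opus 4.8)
The plan is to pass from the perfect set $P$ to the complete binary tree $2^{<\omega}$, to reduce the continuous coloring to a \emph{finite} coloring of finite subtree configurations, and then to apply Milliken's theorem (\cite{KR81}), which is built on the Halpern--L\"{a}uchli theorem (\cite{JH66}). First I would fix an order-preserving continuous injection $f\colon 2^{\omega}\to P$ whose image is a Cantor set inside $P$, chosen so that the lexicographic order on $2^{\omega}$ is carried to the order that $P$ inherits from $\mathbb{R}$; such an $f$ exists by the standard nested-interval construction using that $P$ is perfect. A point of $f[2^{\omega}]$ is the image of a branch of $2^{<\omega}$, so an element of $[\,f[2^{\omega}]\,]^{n}$ is the $f$-image of an $n$-element set of branches. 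It therefore suffices to find an infinite strong subtree $T\subseteq 2^{<\omega}$, whose branches push forward under $f$ to the desired perfect set $Q\subseteq P$, on which the pulled-back coloring $c\circ f$ of $n$-tuples of branches takes few values.

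Second, I would exploit continuity to discretize the problem. Since $c$ is continuous and the branches of any $n$-tuple eventually separate, for each tuple $x_{1}<_{\mathrm{lex}}\cdots<_{\mathrm{lex}}x_{n}$ there is a finite level $N$ such that $c\circ f$ is constant on all tuples of branches agreeing with the $x_{i}$ up to level $N$, and this value depends only on the finite meet-closure $D$ of the nodes $x_{i}\restriction N$, that is, on the finite subtree the branches generate. This recodes $c\circ f$ as a finite coloring of the similarity types of meet-closures of $n$-element sets of branches in $2^{<\omega}$, of which there are only finitely many once $n$ is fixed.

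Third, I would apply Milliken's theorem to this finite coloring, one similarity type at a time, extracting an infinite strong subtree $T\subseteq 2^{<\omega}$ on which the color of an $n$-tuple of branches depends only on the similarity type of its meet-closure. Passing through $f$ yields $Q\subseteq P$ perfect, and on $[Q]^{n}$ the coloring is constant on each type. Finally I would count the surviving types: listing the branches in increasing order $b_{1}<\cdots<b_{n}$ and recording, for each consecutive pair, the level $h_{i}$ of the meet $b_{i}\wedge b_{i+1}$, one may thin so that the $h_{i}$ are pairwise distinct; the entire meet-closure is then recovered from the Cartesian tree on the sequence $(h_{1},\dots,h_{n-1})$, so the type is determined exactly by the relative order of $h_{1},\dots,h_{n-1}$. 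There are precisely $(n-1)!$ such orders, giving at most $(n-1)!$ colors on $[Q]^{n}$.

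The main obstacle I anticipate lies at the interface of the third and fourth steps: reconciling the comparatively rich family of Milliken similarity types---which a priori also remember passing numbers and intermediate stem data---with the coarser invariant furnished by continuity, so as to collapse the count down to exactly the $(n-1)!$ order-types rather than a larger Milliken bound. Making this collapse rigorous, in particular ensuring that the thinning which renders the $h_{i}$ distinct is compatible with the strong-subtree structure produced by Milliken's theorem, is where I expect the real work to be.
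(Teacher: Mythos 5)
The paper does not prove Theorem \ref{thm.Blass} at all: it quotes it from Blass \cite{AB81} and points to Todor\v{c}evi\'{c}'s simpler proof via the Milliken space (Corollary 6.47 of \cite{ST10}), which is also the template for the paper's own generalization in Sections 4--6. Your outline is that Milliken route, so it is the ``right'' approach, but as written it has two genuine gaps. The first is the discretization in your second step. Continuity only gives, for each tuple separately, a level $N$ past which the color is decided; there is no uniform $N$, since the space of $n$-element subsets of a Cantor set is not compact (tuples with very deep meets need very deep deciding levels). Hence $c\circ f$ does \emph{not} become ``a finite coloring of the similarity types of meet-closures''; indeed, if the color depended only on the similarity type, no Ramsey theorem would be needed at all. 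What the recoding actually produces is a partially defined coloring of concrete configurations of unbounded height, to which the fixed-height (finite) Milliken theorem does not apply. The proof needs the infinite form of Milliken's theorem --- which is what \cite{KR81} actually proves: the infinite strong subtrees form a topological Ramsey space, so continuous (indeed Baire- or Souslin-measurable) colorings can be stabilized. Concretely, for each type $\tau$ one maps an infinite strong subtree continuously to a canonical $n$-tuple of its branches realizing $\tau$, pulls $c\circ f$ back along this map, and applies the abstract Ellentuck theorem; this pullback device is exactly what the paper abstracts as the envelope map $c_1$ of Lemma \ref{lem.5.3} and the theorem that follows it.

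The second gap is the one you yourself flagged as ``the real work,'' and it cannot be fixed by thinning within the class of strong subtrees. In a strong subtree of $2^{<\omega}$, \emph{every} node at a splitting level splits, so for $n\geq 4$ there are always $n$-tuples of branches whose consecutive meets lie at the same level (splittings at distinct nodes of a common level), and passing numbers at meet levels genuinely vary; no further strong subtree removes these tuples. Consequently, if $Q$ is the set of branches of the Milliken tree, $[Q]^n$ carries strictly more than $(n-1)!$ persistent types and the theorem's bound fails for that $Q$. The missing idea is to leave the class of strong subtrees at the last step: inside the stabilized strong subtree choose a \emph{skew} perfect subtree --- at most one splitting node per level --- and normalize passing numbers, e.g.\ every non-splitting node at a splitting level is extended by $0$. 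Distinct meets of branches of a skew tree are distinct splitting nodes, hence lie at distinct levels, and all passing data is constant; so the similarity type of any $n$-tuple from the resulting perfect set is exactly the order type of $(h_1,\dots,h_{n-1})$, of which there are $(n-1)!$, each monochromatic because these tuples form a subfamily of the stabilized ones. This skewing-plus-normalization is precisely why the trees in the paper's spaces $\mathcal{F}_\infty(\tau)$ are required to be skew; with it your outline closes, and without it the count cannot collapse from the full Milliken type count to $(n-1)!$.
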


In the proof of  this theorem, Blass defined patterns for finite subsets
of a perfect tree $T$ such that for every finite continuous coloring of finite subsets of the
nodes in $T$, one can make all subsets with a fixed pattern monochromatic by going to
a perfect subtree. Todor\v{c}evi\'{c} (see \cite{ST10}, Corollary 6.47) provided a simpler proof of Blass'
theorem using the Milliken space, as the perfect trees in Blass' argument can be
replaced by strong subtrees.

Given a set $X$, a subset $Y$ is called an $n$-subset of $X$ if $Y$ is a subset of $X$ of size $n$. Let
$[X]^n=\{Y\subseteq X : |Y|=n\}$ be the set of all $n$-subsets of $X$.  For a graph $G$, let $V(G)$
denote its vertex set and $E(G)\subseteq [V(G)]^2$ denote its edge relation, that is, $E(G)$ is an irreflexive and symmetric binary relation.  An inverse limit of finite
ordered graphs is called {\em universal} if every inverse limit of finite
ordered graphs order-embeds
continuously into it. Geschke (see \cite{SG13}) proved the existence of a universal inverse limit graph. Moreover, Huber-Geschke-Kojman (see \cite{SH19}) gave the definition of  a universal   inverse limit graph with no mention of an inverse system.

\begin{definition}[Huber-Geschke-Kojman \cite{SH19}]
 A {\em universal inverse limit} of finite
ordered graphs
 is a triple $G=\langle V, E, <\rangle$, such
that the following conditions hold.
\begin{enumerate}
\item
 $V$ is a compact subset of $\mathbb{R}\backslash\mathbb{Q}$, $E\subseteq [V]
^2$, and $<$ is the
restriction of the standard order on $\mathbb{R}$ to $V$.
\item
 (Modular profiniteness)  For every pair of distinct vertices $u, v\in  V$, there is a partition of $V$ to finitely
many closed intervals such that
\begin{enumerate}
\item[(a)]
 $u, v$ belong to different intervals from the partition;

\item[(b)]
 for every interval $I$ in the partition, for all $x\in V\backslash I$ and for all $y, z\in I$,
$(x, y)\in E$ if and only if $(x, z)\in E$.
\end{enumerate}
\item[(3)]
(Universality) Every nonempty open interval of $V$ contains induced copies of all finite ordered
 graphs.
 \end{enumerate}
\end{definition}

Based on  the way Blass proved Theorem \ref{thm.Blass} by partitioning finite subsets into patterns
in \cite{AB81}, Huber, Geschke and Kojman proved in \cite{SH19} the following  partition theorem for universal inverse limits of finite ordered graphs  by partitioning the
isomorphism class of finite ordered graph $H$ into $T(H)$ many  strong isomorphism
classes,  called types.
This theorem tells us that the universal inverse limit graphs have
finite big Ramsey degrees under  finite Baire-measurable colorings.

\begin{theorem}[Huber-Geschke-Kojman \cite{SH19}]\label{thm.HGK} For every finite ordered graph $H$ there is
$T(H)< \omega$ such that for every  universal inverse limit  graph $G$, and
for every finite Baire-measurable coloring of the set
$\big(\mathop{}_{H}^{G}\big)$ of all copies of $H$ in $G$, there is
a closed copy $G^\prime$ of $G$ in $G$ such that the set $\big(\mathop{}_{H}^{G^\prime}\big)$ of all copies of $H$ in $G^\prime$ has at most $T(H)$
colours.
\end{theorem}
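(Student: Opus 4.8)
The plan is to follow Blass's strategy behind Theorem~\ref{thm.Blass}: represent $G$ by a perfect tree, partition the copies of $H$ into finitely many \emph{types} that record the tree-geometry and edge-data of a copy, and then apply a Milliken-style strong-subtree theorem to make each type monochromatic. First I would fix a tree representation valid for an arbitrary universal inverse limit graph $G$. Using modular profiniteness, each vertex of $V$ is identified with a branch through a finitely branching perfect tree $T$, where the lexicographic order on branches reproduces the order $<$ inherited from $\mathbb{R}$, and where for any two branches the value of $E$ is decided at the level at which the partition furnished by modular profiniteness separates them; concretely, the edge relation between two vertices is a function of the finite initial data up to their splitting level. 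This is precisely the inverse-limit coding underlying the Huber--Geschke--Kojman definition, and a strong subtree $S\subseteq T$, in Milliken's sense, carries via its branches a closed substructure that is again a universal inverse limit graph, hence a closed copy $G'$ of $G$.

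Next I would define types. Given a copy of $H$ with vertices enumerated $x_1<\cdots<x_n$ (where $n=|V(H)|$) following the order of $H$, I would form the finite subtree of $T$ generated by the corresponding branches; the \emph{type} of the copy records the combinatorial shape of this subtree together with the lexicographic order of its branches, the relative heights of its at most $n-1$ branching nodes, and, for each pair $\{x_i,x_j\}$, the edge value read off from the splitting data, together with the labelling coming from the order of $H$. Two copies then have the same type precisely when one is carried to the other by a level- and order-preserving embedding of the generated subtrees. Since a set of $n$ branches generates a subtree with boundedly many branching nodes and boundedly many admissible shapes and edge-labellings, there are only finitely many types, and I would set $T(H)$ to be this number.

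I would then run the Ramsey argument. Given a finite Baire-measurable coloring $c$ of $\big(\mathop{}_{H}^{G}\big)$, fix an enumeration $\tau_1,\dots,\tau_{T(H)}$ of the types. For a single type $\tau_k$, the copies of $H$ of type $\tau_k$ are parametrized by embedded subtrees of one fixed shape, and the Baire-measurable version of Milliken's theorem for strong subtrees (the strong-subtree recasting, due to Todor\v{c}evi\'c, of Blass's perfect-tree partition method used for Theorem~\ref{thm.Blass}) yields a strong subtree on which $c$ is constant on all copies of type $\tau_k$. Iterating over $k=1,\dots,T(H)$ and using that a strong subtree of a strong subtree is again a strong subtree, so the successive applications compose, produces a single strong subtree $S^\ast\subseteq T$ on which $c$ is constant on each type. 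The branches of $S^\ast$ then give the desired closed copy $G'$, and since every copy of $H$ in $G'$ has one of the $T(H)$ types while $c$ takes a single value on each type, the set $\big(\mathop{}_{H}^{G'}\big)$ receives at most $T(H)$ colours.

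The main obstacle will be the interface between the two notions of substructure: one must verify both that the branches of a strong subtree really do form a closed copy of the \emph{universal} inverse limit graph, so that the reduced structure remains universal rather than merely a substructure, and that the types are genuinely invariant under strong-subtree embeddings, so that monochromaticity on a type is preserved when passing to $S^\ast$. The other delicate point is establishing the Baire-measurable strong-subtree partition result in exactly the form needed here; this is where Blass's pattern method, recast through the Milliken space, does the real work, since the bare combinatorial Halpern--L\"auchli theorem colours a single level, whereas we must control all of $\big(\mathop{}_{H}^{G'}\big)$ simultaneously.
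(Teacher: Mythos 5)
Your central step fails. The claim that ``the Baire-measurable version of Milliken's theorem for strong subtrees \dots\ yields a strong subtree on which $c$ is constant on all copies of type $\tau_k$'' is not true, and the obstruction is exactly the reason the paper stresses that its spaces are \emph{based on the Halpern--L\"auchli theorem, but different from the Milliken space of strong subtrees}. A strong subtree of $T_{\max}$ must, between consecutive selected levels, keep an extension of \emph{every} immediate successor of every kept node; hence at each splitting node $u$ of the subtree the full ordered graph $G_u\cong \mathrm{R}_{|u|}$ survives, and a continuous coloring may distinguish copies of $H$ according to \emph{which} vertices of $G_u$ they pass through. Concretely, let $H$ be a single edge and color a copy $\{x<y\}$ by the parity of $x(|x\cap y|)$, i.e.\ by the index of the immediate successor of the meet through which the smaller branch passes. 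This coloring is continuous (it depends only on initial segments just past the meet), all edge-copies with a meet form a single type, yet on \emph{every} strong subtree both colors occur on that type: for large $|u|$ the graph $G_u\cong \mathrm{R}_{|u|}$ contains edges whose lesser endpoint has even index and edges whose lesser endpoint has odd index, and all immediate successors of $u$ extend to branches of the strong subtree. So no strong subtree makes types monochromatic, and your value of $T(H)$ is exceeded on every $G'$ your argument can produce. (Your other interface worry is actually the unproblematic one: meets of branches of a strong subtree are nodes of that subtree, so its branches do carry a closed universal inverse limit graph.)

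What the correct argument needs, and what the paper (following Zheng and Huber--Geschke--Kojman) does, is to pass to subtrees that keep only a \emph{selected} induced substructure $\pmb{F^S_u}$ of the immediate successors at each splitting node --- the $\pmb{F_{\max}}$-trees, organized into the skew-tree spaces $\mathcal{F}_\infty(\tau)$ --- and the selection is made possible by the finite Ramsey theorem for ordered graphs (Ne\v{s}et\v{r}il--R\"{o}dl / Abramson--Harrington), not by Milliken: inside a sufficiently large $\pmb{F^S_{t(s)}}$ one finds a copy of a prescribed structure $\pmb{A_{j+l}}$ all of whose copies of $\pmb{F_j}$ receive the same color (Step 1 of the proof of Lemma \ref{lem.4.4}). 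Only after this vertical stabilization does a Halpern--L\"auchli argument (Theorem \ref{thm.HL}) handle the coloring across the product of the finitely many subtrees above a fixed level (Step 2), yielding the topological Ramsey space axiom {\bf (A4)}; a separate fusion argument (Lemma 5.7, from Lemma 3.8 of \cite{SH19}) reduces Baire-measurable colorings to continuous ones. Your outline gets the type decomposition, the iteration over the finitely many types, and the need for a Baire-to-continuous reduction right, but the engine you propose cannot do the work: Milliken's theorem can never thin out the successor structure at splitting nodes, which is precisely where the coloring must be tamed, and which is why the Ramsey property of the class of finite ordered graphs is an essential ingredient rather than a convenience.
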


The following notation will be used throughout.
The set of natural numbers, $\{0,1,2,3,\dots\}$, will be denoted by $\omega$.
Let $\omega^{<\omega}$ be the set of all finite sequences of natural numbers. Let $\subseteq$ denote
the initial segment relation. For an element $s\in  \omega^{<\omega}$, let $|s|$ denote the length of $s$. We call a downward closed subset $T$ of $\omega^{<\omega}$ a tree, ordered by $\subseteq$. Every element
$t$ of a tree $T$ is called a node. Given a tree $T$, let $[T]$ be the set of all infinite branches of
$T$, i.e.,
$[T]=\{x\in \omega^{\omega} : (\forall \ n< \omega) \ x\upharpoonright n\in  T\}$, where $x\upharpoonright n$ is its initial
segment of length $n$.  $T^\prime$ is called  a subtree of $T$ if $T^\prime\subseteq  T$ and $T^\prime$ is a tree. For a tree $T$ and $t\in T$,
$s$ is called an immediate successor of
$t$ if $s$ is a minimal element of $T$ above $t$. The set of immediate successors of
$t$ in $T$ is denoted by $\mbox{succ}_T(t)$. Let $T_t$ be the set of all nodes in $T$ comparable to $t$, i.e.,  $T_t =\{s\in  T : t\subseteq s \vee s\subseteq
t\}$. For $n\in \omega$, we let $T(n)=\{t\in T : |t|=n\}$.

In order to state the results of Huber-Geschke-Kojman and of Zheng, we need to introduce the following notation and structures.
Let $\mathrm{R}$ denote the Rado graph, i.e., the unique (up to isomorphism) countable universal  homogeneous
graph. We assume that the set of vertices of $\mathrm{R}$ is just the set $\omega$
of natural numbers.  For $n\in \omega$, let $\mathrm{R}_n$ be
the induced   subgraph of $\mathrm{R}$ on $\{0,\dots,n\}$.

\begin{definition}[\cite{SH19}] Let $T_{\max}\subseteq \omega^{<\omega}$ be the nonempty tree such that for each $t\in  T_{\max}$,
$$\mbox{succ}_{T_{\max}}(t)=\{t^\frown\langle0\rangle, t^\frown\langle1\rangle, \dots,  t^\frown\langle|t|\rangle\}.$$
For $t\in T_{\max}$, we define $G_t$ to be the ordered  graph on the vertex set $\mbox{succ}_{T_{\max}}(t)$ with
lexicographical ordering, such that $G_t$ is isomorphic to  $\mathrm{R}_{|t|}$.
\end{definition}

Note that $[T_{\max}]$ is a subset of $\omega^{\omega}$.
Given  $x, y\in  [T_{\max}]$ with $x\neq y$, let $x\cap y\in \omega^{<\omega}$ be the
common initial segment of $x$ and $y$, i.e. $x\cap y= x\upharpoonright \min\{n : x(n)\neq y(n)\}$. The tree $T_{\max}$ and the ordered graphs $G_t \ (t\in T_{\max})$ induce an ordered  graph $G_{\max}$ on the
vertex set $[T_{\max}]$, ordered lexicographically, with the edge relation defined as follows.
For $x, y\in [T_{\max}], (x, y)\in  E(G_{\max})$ if and only if
$(x\upharpoonright (|x\cap y|+1), y\upharpoonright (|x\cap y|+1))\in E(G_{x\cap y})$.  Suppose that $T$ is  a  subtree of $T_{\max}$ and $t\in T$. Let
$G^T_t$ denote the induced  subgraph of $G_t$ on the vertex set $\mbox{succ}_{T}(t)$. We define $G(T)$ to
be the induced   subgraph of $G_{\max}$ on $[T]$.
A subtree $T$ of $T_{\max}$ is called a {\em $G_{\max}$-tree} if for every finite ordered  graph $H$  and every $t\in T$, there is $s\in T$
with $t\subseteq s$ such that $H$ embeds into $G^T_s$.
In particular, $T_{\max}$ is a $G_{\max}$-tree.

Let $(\mathcal{R}, \leq, r)$ be a triple satisfying the  following: $\mathcal{R}$ is a nonempty set, $\leq$ is a quasi-ordering on $\mathcal{R}$, and $r : \mathcal{R}\times \omega\longrightarrow \mathcal{AR}$ is a
mapping giving us the sequence ($r_n(\cdot)=r(\cdot,n)$) of approximation mappings, where
$$\mathcal{AR}=\{r_n(A):A\in\mathcal{R}\mathrm{\ and\ } n\in\omega\}.
$$
For $a\in \mathcal{AR}$ and $A\in  \mathcal{R}$,
$$[a, A]=\{B\in  \mathcal{R} : (B\leq A)\wedge (\exists n)(r_n(A)=a)\}.$$
The topology on $\mathcal{R}$ is given by the basic open sets $[a, A]$. This topology is called
the {\em Ellentuck topology} on $\mathcal{R}$. Given the Ellentuck topology
on $\mathcal{R}$, the notions of nowhere dense, and hence of meager are defined in the natural
way. Thus, we may say that a subset $\mathcal{X}$ of $\mathcal{R}$ has the {\em property of Baire} iff $\mathcal{X}=\mathcal{O}\triangle \mathcal{M}$
for some Ellentuck open set $\mathcal{O}\subseteq \mathcal{R}$ and Ellentuck meager set $\mathcal{M }\subseteq \mathcal{R}$.

\begin{definition}[\cite{ST10}]  A subset $\mathcal{X}$ of $\mathcal{R}$ is {\em Ramsey} if for every $\emptyset\neq [a, A]$, there is a
$B\in [a, A]$ such that $[a, B]\subseteq \mathcal{X}$ or $[a, B]\cap \mathcal{X}=\emptyset$. $\mathcal{X}\subseteq \mathcal{R}$ is {\em Ramsey null} if for
every $\emptyset\neq [a, A]$, there is a
$B\in [a, A]$ such that $[a, B]\cap \mathcal{X}=\emptyset$.

A triple $(\mathcal{R},\leq,r)$ is a {\em topological Ramsey space} if every property of Baire subset
of $\mathcal{R}$ is Ramsey and if every meager subset of $\mathcal{R}$ is Ramsey null.
\end{definition}

In \cite{YY18}, Zheng constructed a collection of topological Ramsey spaces of trees. For  each type $\tau$ of finite ordered graphs, the space $(\mathcal{G}_\infty(\tau), \leq, r)$ consists of $G_{\max}$-trees of a
particular shape.  The new spaces $\mathcal{G}_\infty(\tau)$ not only depend  on  the fact that the class of finite ordered graphs is the Ramsey class, but also, similarly to the Milliken space, are based
on the Halpern-L\"{a}uchli  theorem.
Moreover, she presented an application of the topological Ramsey spaces $\mathcal{G}_\infty(\tau)$ to inverse limit graph theory.
Similarly to how Todor\v{c}evi\'{c}  proved
 Blass' Theorem \ref{thm.Blass}, Zheng used  the new spaces $\mathcal{G}_\infty(\tau)$ to prove
the following Theorem \ref{thm.Zheng3.1} (Theorem 3.1 in \cite{SH19}),
 which is a key step to show the above Theorem
 \ref{thm.HGK}
  in \cite{SH19}.

\begin{theorem}[Theorem 3.1 in \cite{SH19}]\label{thm.Zheng3.1}
Let $T$ be an arbitrary $G_{\max}$-tree. For every type $\tau$ of a
finite induced subgraph of $G_{\max}$, and for every
continuous coloring $c :
\big(\mathop{}_{ \ \ \tau}^{G(T)}\big)\longrightarrow 2$,
there is a $G_{\max}$-subtree $S$ of $T$ such that c is constant on $\big(\mathop{}_{ \ \ \tau}^{G(S)}\big)$.
\end{theorem}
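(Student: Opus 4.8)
The plan is to deduce the theorem from the fact, established by Zheng, that $(\mathcal{G}_\infty(\tau), \leq, r)$ is a topological Ramsey space, by transporting the coloring of copies of $\tau$ into a coloring of finite approximations to which the Ramsey property of Baire sets applies. Since $T$ is an arbitrary $G_{\max}$-tree while the space $\mathcal{G}_\infty(\tau)$ consists of $G_{\max}$-trees of a prescribed shape, the first step is to locate inside $T$ a tree $A$ belonging to $\mathcal{G}_\infty(\tau)$. Here I would use that $T$ is a $G_{\max}$-tree, so that above every node, and cofinally, every finite ordered graph realizing the type $\tau$ embeds into some $G^T_s$; pruning $T$ level by level then produces $A\subseteq T$ with $A\in\mathcal{G}_\infty(\tau)$. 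It suffices afterwards to find $S\leq A$ in $\mathcal{G}_\infty(\tau)$ on which $c$ is constant, for then $S\subseteq A\subseteq T$ is automatically a $G_{\max}$-subtree of $T$.

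Next I would set up the dictionary between copies of $\tau$ in $G(A)$ and finite approximations in $\mathcal{AR}$. The purpose of the special shape of trees in $\mathcal{G}_\infty(\tau)$ is precisely that each copy of $\tau$ occurring in $G(A)$ is decided by a finite piece of $A$: the finitely many splitting nodes witnessing the meet structure of the copy, together with the edge data recorded by the graphs $G^A_t$ at those nodes. I would make this precise by sending each copy of $\tau$ to the least approximation $r_n(A)$ containing all of its splitting nodes, thereby identifying $\big(\mathop{}_{\ \ \tau}^{G(A)}\big)$ with a distinguished subfamily of $\mathcal{AR}$. Transporting $c$ along this identification yields a $2$-coloring $\hat{c}$ of the corresponding elements of $\mathcal{AR}$.

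The third step is to check that the induced set is Baire-measurable for the Ellentuck topology and then to invoke the Ramsey property. Because $c$ is continuous in the product (branch) topology, the color of a copy of $\tau$ depends only on sufficiently long initial segments of its branches; combined with the correspondence above, this forces $\hat{c}^{-1}(0)\subseteq\mathcal{R}$ to have the property of Baire. Since $(\mathcal{G}_\infty(\tau), \leq, r)$ is a topological Ramsey space, every Baire set is Ramsey, so from the basic open set determined by $A$ we obtain $S\leq A$ in $\mathcal{G}_\infty(\tau)$ on which $\hat{c}$ is constant across all relevant approximations below $S$. Translating back through the dictionary, $c$ is constant on $\big(\mathop{}_{\ \ \tau}^{G(S)}\big)$; and as $S\in\mathcal{G}_\infty(\tau)$ it is a $G_{\max}$-tree, giving the conclusion.

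The main obstacle I anticipate lies in the second and third steps: matching copies of $\tau$ with approximations in a way that transports $c$ to a well-defined $\hat{c}$ and is compatible with the Ellentuck topology, so that continuity of $c$ genuinely yields a Baire-measurable $\hat{c}$. This is exactly where the careful design of the shape of the trees in $\mathcal{G}_\infty(\tau)$ does the real work, being engineered so that the Halpern-L\"{a}uchli-based pigeonhole governing the space is aligned with the appearance of $\tau$-copies. Once that alignment is secured, the passage from the abstract Ramsey property of the space to the concrete monochromatic subtree $S$ is formal.
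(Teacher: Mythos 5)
Your overall architecture matches the paper's (and Zheng's): shrink $T$ to a member $A$ of the space, transport the coloring into the space, invoke the topological Ramsey space theorem, translate back. However, there is a genuine gap at exactly the point you flag as "the main obstacle," and it is not repaired by "the careful design of the shape of the trees" alone. A copy of $\tau$ in $G(A)$ is a finite set of \emph{infinite} branches; your dictionary sends it to the least approximation containing its splitting nodes, which amounts to sending $H$ to $H^{\vee}$. But many distinct copies of $\tau$ share the same $H^{\vee}$ (extend the nodes of $H^{\vee}$ to branches of $A$ in different ways), and a continuous coloring can assign these copies different colors: continuity only says that the color of each copy is determined by \emph{some} finite initial segments of its branches, whose length depends on the copy and is in general strictly greater than the splitting levels. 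So $c$ does not factor through your correspondence on $A$, nor on any member of the space, and the induced $\hat{c}$ is simply not well-defined. (A smaller infelicity: you waver between coloring finite approximations in $\mathcal{AR}$ and coloring elements of $\mathcal{R}$; the paper works with elements of the space via the envelopes $\mathcal{C}_{H}$ and the map $c_1$ of its Lemma 5.3.)

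What is missing is precisely the paper's Lemma 5.5 (the analogue, for these spaces, of the Huber--Geschke--Kojman reduction): using continuity of $c$ together with compactness of $[T_{t_1}]\times\cdots\times[T_{t_l}]$, one first extracts a function $f:\omega\longrightarrow\omega$ such that the color of any copy $H$ with $\triangle(H)+1=n$ depends only on $H\upharpoonright f(n)$; one then constructs (by fusion) a $G_{\max}$-subtree $S$ of $T$ having no splitting node whose length lies in any interval $(n, f(n)]$ for $n$ a splitting level of $S$. Inside such an $S$, every node at level $n$ has a unique extension in $S$ to level $f(n)$, so $H\upharpoonright f(n)$ --- and hence the color of $H$ --- is determined by $H^{\vee}$. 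Only after this step does $c$ factor through $H\mapsto H^{\vee}$, making the transported coloring $\bar{c}=c\circ c_1$ on $[\emptyset,S]$ well-defined and continuous, and only then can the abstract Ramsey property of the space be applied as you describe. With that lemma inserted before your second step, the remainder of your plan goes through and coincides with the paper's proof.
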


In this paper, we extend Zheng's  methods to build  a collection of  topological Ramsey spaces of trees in the setting of
Fra\"{\i}ss\'{e} classes of finite ordered structures with finitely many binary relations satisfying the Ramsey property.
 Based on these topological Ramsey spaces and the work of Huber-Geschke-Kojman on
   inverse limits of  finite ordered graphs, we prove the following theorem.
Here, $\pmb{F_{\max}}$ is a universal limit structure encoded
in a particular way  on
 the set of   infinite branches of a  certain  finitely branching tree $T_{\max}$
(see Definitions \ref{defn.T_max} and \ref{defn.F_max}).

\begin{theorem}\label{thm.1.7}
 Let $\mathcal{K}$ be a Fra\"{\i}ss\'{e} class, in a finite   signature,
 of finite ordered binary relational structures with the Ramsey property. For every $\pmb{H}\in \mathcal{K}$,  there is a finite number $T(\pmb{H}, \pmb{F_{\max}})$ such that for every universal  inverse limit structure
$\pmb{G}$, for every finite Baire-measurable coloring of the set $\big(\mathop{}_{\pmb{H}}^{\pmb{G}}\big)$
of all copies of $\pmb{H}$ in $\pmb{G}$, there is a closed
copy $\pmb{G^\prime}$ of $\pmb{G}$ contained in $\pmb{G}$ such that  the set $\big(\mathop{}_{\pmb{H}}^{\pmb{G^\prime}}\big)$
of all copies of $\pmb{H}$ in $\pmb{G^\prime}$ has no more than $T(\pmb{H}, \pmb{F_{\max}})$ colors.
\end{theorem}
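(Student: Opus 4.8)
The plan is to follow the route by which Huber-Geschke-Kojman derived Theorem \ref{thm.HGK} from the single-type pigeonhole Theorem \ref{thm.Zheng3.1}, replacing the class of finite ordered graphs by the \Fraisse\ class $\mathcal{K}$ and the spaces $\mathcal{G}_\infty(\tau)$ by the topological Ramsey spaces of trees built for $\mathcal{K}$ in the earlier sections. Write $\pmb{F_{\max}}=\pmb{F}(T_{\max})$ for the canonical universal structure coded on $[T_{\max}]$ (Definitions \ref{defn.T_max} and \ref{defn.F_max}). First I would reduce the statement, which quantifies over all universal inverse limit structures $\pmb{G}$, to this one canonical structure: since $\pmb{G}$ is universal, the inverse limit $\pmb{F_{\max}}$ order-embeds continuously into $\pmb{G}$ with closed image, and pulling the given finite Baire-measurable coloring of $\big(\mathop{}_{\pmb{H}}^{\pmb{G}}\big)$ back along this embedding yields a finite Baire-measurable coloring of $\big(\mathop{}_{\pmb{H}}^{\pmb{F_{\max}}}\big)$. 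Conversely, any closed copy of $\pmb{F_{\max}}$ found inside $\pmb{G}$ is itself universal, so $\pmb{G}$ re-embeds into it as the desired closed copy $\pmb{G^\prime}\subseteq\pmb{G}$. Hence it suffices to colour copies of $\pmb{H}$ inside $\pmb{F_{\max}}$, i.e.\ to work with $\pmb{F_{\max}}$-trees (the analogue for $\mathcal{K}$ of a $G_{\max}$-tree).

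Next I would attach to each $\pmb{H}\in\mathcal{K}$ a finite set of \emph{types}, the analogue of Blass's patterns and of the quantity $T(H)$ in Theorem \ref{thm.HGK}. A type records not merely the isomorphism class of a copy of $\pmb{H}$ but the full combinatorial shape of how its vertices sit as branches of $T_{\max}$: the finite tree generated by the pairwise meets $x\cap y$, the relative lexicographic order of the branches, and the values of the coding structures at the meet nodes that determine which relations of $\mathcal{K}$ hold among the branches. Because $T_{\max}$ is finitely branching and $\pmb{H}$ is finite, there are only finitely many types, and I would set $T(\pmb{H},\pmb{F_{\max}})$ equal to their number. Every copy of $\pmb{H}$ in $\pmb{F}(T)$, for any $\pmb{F_{\max}}$-subtree $T$, realizes exactly one type, so $\big(\mathop{}_{\pmb{H}}^{\pmb{F}(T)}\big)$ is partitioned into the finitely many type-classes $\big(\mathop{}_{\ \ \tau}^{\pmb{F}(T)}\big)$.

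The heart of the argument, and the step I expect to be the main obstacle, is the single-type pigeonhole. For a fixed type $\tau$ the copies of type $\tau$ are indexed by a topological Ramsey space $(\mathcal{F}_\infty(\tau),\leq,r)$ of trees of a prescribed shape, constructed exactly as Zheng constructed $\mathcal{G}_\infty(\tau)$ but running the Halpern-L\"{a}uchli-style level-by-level pigeonhole with the Ramsey property of $\mathcal{K}$ in place of that of finite ordered graphs, so as to homogenize the local $\mathcal{K}$-structure at each splitting node. Granting that these triples satisfy the topological Ramsey space axioms, every subset of $\mathcal{F}_\infty(\tau)$ with the property of Baire is Ramsey; applying this to each colour class of the coloring in turn and iterating produces an $\pmb{F_{\max}}$-subtree on which the coloring is constant on $\big(\mathop{}_{\ \ \tau}^{\pmb{F}(\cdot)}\big)$. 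It is precisely this step that upgrades the conclusion from the continuous colorings of Theorem \ref{thm.Zheng3.1} to arbitrary finite Baire-measurable colorings, and it is the reason for building the spaces $\mathcal{F}_\infty(\tau)$ rather than arguing directly. The difficulty is entirely in verifying the Ramsey space axioms through a Halpern-L\"{a}uchli argument adapted to colour the level structure, where the hypothesis that $\mathcal{K}$ has the Ramsey property is exactly what is needed.

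Finally I would iterate the single-type pigeonhole over the finitely many types. Enumerating the types as $\tau_1,\dots,\tau_k$ with $k=T(\pmb{H},\pmb{F_{\max}})$ and starting from $T_{\max}$, each successive application returns an $\pmb{F_{\max}}$-subtree on which the coloring is constant on the current type-class while preserving constancy on the earlier type-classes, since copies of type $\tau_i$ inside a further subtree remain copies of type $\tau_i$. After $k$ steps this yields a single $\pmb{F_{\max}}$-subtree $S$ on which the coloring is constant on each of the $k$ type-classes, so the coloring takes at most $T(\pmb{H},\pmb{F_{\max}})$ colours on $\big(\mathop{}_{\pmb{H}}^{\pmb{F}(S)}\big)$. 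Then $\pmb{F}(S)$ is a closed copy of $\pmb{F_{\max}}$ inside $\pmb{G}$, and by the reduction of the first step it contains a closed copy $\pmb{G^\prime}$ of $\pmb{G}$; every copy of $\pmb{H}$ in $\pmb{G^\prime}$ lies in $\big(\mathop{}_{\pmb{H}}^{\pmb{F}(S)}\big)$, so the coloring takes no more than $T(\pmb{H},\pmb{F_{\max}})$ colours on $\big(\mathop{}_{\pmb{H}}^{\pmb{G^\prime}}\big)$, as required.
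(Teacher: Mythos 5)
Your overall architecture (reduce to $\pmb{F_{\max}}$, partition copies of $\pmb{H}$ into finitely many types, homogenize one type at a time via the spaces $\mathcal{F}_\infty(\tau)$, then iterate) is the paper's architecture, and your first and last steps match the proof of Theorem \ref{thm.5.9}. But there is a genuine gap at what you yourself identify as the heart of the argument: you claim that the abstract topological-Ramsey-space theorem (``every subset of $\mathcal{F}_\infty(\tau)$ with the property of Baire is Ramsey'') is precisely what upgrades the result from continuous to Baire-measurable colorings. This conflates two different notions of Baire measurability living on two different spaces. The hypothesis of the theorem is Baire measurability with respect to the metric topology on the space $\big(\mathop{}_{\pmb{H}}^{\pmb{G}}\big)$ of copies of $\pmb{H}$ (Definition 5.4), whereas the Ramsey-space machinery consumes sets with the property of Baire in the \emph{Ellentuck} topology on $\mathcal{F}_\infty(\tau)$. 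There is no direct transfer between the two: already in the classical Ellentuck space one can build (under AC) a metrically meager, hence metrically Baire, set that is not Ramsey, so pulling a metrically-Baire coloring back to the Ramsey space gives no guarantee of the Ellentuck--Baire property. Worse, before any transfer can even be attempted the coloring must induce a well-defined coloring of $\mathcal{F}_\infty(\tau)$; since the correspondence between members $U\in\mathcal{F}_\infty(\tau)$ and copies of $\pmb{H}$ is many-to-many, this requires first knowing that $c$ depends only on the finite data $\pmb{H}^\vee$, which is not automatic.

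The paper fills this gap with two lemmas that your proposal omits and that cannot be absorbed into the verification of the Ramsey-space axioms. First, Lemma 5.7 (an extension of Lemma 3.8 of Huber--Geschke--Kojman) is a separate fusion argument: given a metrically Baire-measurable coloring, one writes the color classes as open sets modulo countably many closed nowhere dense sets and builds a fusion sequence of $\pmb{F_{\max}}$-subtrees whose limit avoids all of them, obtaining a subtree on which the coloring is \emph{continuous}. Second, Lemma 5.5 uses compactness of $[T_{t_1}]\times\cdots\times[T_{t_l}]$ to show that a continuous coloring, after passing to a further $\pmb{F_{\max}}$-subtree, depends only on $\pmb{H}^\vee$; only then does the envelope map $c_1$ of Lemma \ref{lem.5.3} turn $c$ into a continuous, hence Ellentuck--Baire, coloring of $[\emptyset,T]$, to which Theorem \ref{thm.FclasstRs} applies (Theorem 5.6). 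So the correct division of labor is the reverse of the one you describe: the spaces $\mathcal{F}_\infty(\tau)$ prove the analogue of Theorem \ref{thm.Zheng3.1} for \emph{continuous} colorings, while the Baire-measurable upgrade is carried out beforehand by the fusion argument of Lemma 5.7. Your proof becomes correct once these two reductions are inserted in front of your single-type pigeonhole step.
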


This means that  for each such Fra\"{\i}ss\'{e} class,
its universal inverse limit structures  have finite big Ramsey degrees under  finite Baire-measurable colorings.
For the following classes, we characterize the big Ramsey degrees in terms of  types.

\begin{theorem}\label{thm.1.8}
Let $\mathcal{K}$ be a \Fraisse\ class in a finite binary relational signature  such that one of the following hold:
\begin{enumerate}
\item
$\mathcal{K}$ is an ordered expansion of a free amalgamation class;
\item
$\mathcal{K}$ is the class of finite ordered tournaments;
\item
$\mathcal{K}$ is the class of finite partial orders with a linear extension.
\end{enumerate}
Let  $\pmb{G}$ be
 a  universal inverse limit structure  for  $\mathcal{K}$ contained in
 $\pmb{F_{\max}}$.
Then for each $\pmb{H}\in \mathcal{K}$, each type representing $\pmb{H}$  in $\pmb{G}$ persists in each closed subcopy  of $\pmb{G}$.
It follows that
 the  big Ramsey degree $T(\pmb{H}, \pmb{F_{\max}})$
for finite
Baire-measurable
colorings of
$\big(\mathop{}_{ \ \ \pmb{H}}^{\pmb{F_{\max}}}\big)$
is exactly the number of types in $T_{\max}$ representing a copy of $\pmb{H}$.
\end{theorem}




\section{Ordered binary relational Fra\"{\i}ss\'{e} classes and $\pmb{F_{\max}}$-trees}


Let us first review some basic facts of the Fra\"{\i}ss\'{e} theory  for finite ordered binary relational  structures which are
necessary to this paper.
More general background on Fra\"{\i}ss\'{e} theory can be found in \cite{AK05}.

We shall call $L=\{<, R_0,\dots,R_{k-1}\}$ an {\em ordered binary relational signature} if it consists
of the order relation symbol $<$ and finitely many  binary relation symbols $R_{\ell}$, $\ell<k$ for some $k<\omega$.
 A structure for $L$ is of the form $\pmb{A}=\langle A, <^{\pmb{A}}, R_0^{\pmb{A}},\dots,R_{k-1}^{\pmb{A}}\rangle$, where $A\neq \emptyset$  is the universe of $\pmb{A},\ <^{\pmb{A}}$ is a linear
ordering of $A$,  and  each  $R_{\ell}^{\pmb{A}}\subseteq A\times A$.
 An embedding between structures $\pmb{A}, \pmb{B}$ for $L$ is an injection
$\pi : A\longrightarrow B$ such that for any two $a, a^\prime\in A,\ a<^{\pmb{A}} a^\prime\Longleftrightarrow
\pi(a)<^{\pmb{B}} \pi(a^\prime)$ and
for each $\ell<k$,
$(a_1, a_2)\in R_\ell^{\pmb{A}}\Longleftrightarrow (\pi(a_1),\pi(a_2))\in R_\ell^{\pmb{B}}$.
If $\pi$ is the identity, we say
that $\pmb{A}$ is a substructure of $\pmb{B}$. An isomorphism is an onto embedding. We
write $\pmb{A}\leq \pmb{B}$ if $\pmb{A}$ can be embedded in $\pmb{B}$ and $\pmb{A}\cong \pmb{B}$ if $\pmb{A}$  is isomorphic to $\pmb{B}$.

A class $\mathcal{K}$ of finite structures is called {\em hereditary} if $\pmb{A}\leq \pmb{B}\in  \mathcal{K}$ implies $\pmb{A}\in  \mathcal{K}$.
 It satisfies the {\em joint embedding property} if for any $\pmb{A},\ \pmb{B}\in  \mathcal{K}$, there is
$\pmb{C}\in  \mathcal{K}$ with $\pmb{A}\leq \pmb{C}$ and $\pmb{B}\leq \pmb{C}$.  We say that $\mathcal{K}$ satisfies the {\em amalgamation property}
if for any embeddings $f : \pmb{A}\longrightarrow \pmb{B},\ g : \pmb{A}\longrightarrow \pmb{C}$ with $\pmb{A},\ \pmb{B},\ \pmb{C}\in  \mathcal{K}$, there is
$\pmb{D}\in  \mathcal{K}$ and embeddings $r : \pmb{B}\longrightarrow \pmb{D}$ and $s : \pmb{C}\longrightarrow \pmb{D}$, such that $r\circ f= s\circ g$.
A class  of finite structures  $\mathcal{K}$ is called a {\em Fra\"{\i}ss\'{e} class}   if it is hereditary, satisfies joint embedding and
amalgamation, contains only countably many structures, up to isomorphism,
and contains structures of arbitrarily large (finite) cardinality.
A Fra\"{\i}ss\'{e} class satisfies the {\em free amalgamation property}  (or {\em has free amalgamation})
if $\pmb{D}$, $r$, and $s$ in the amalgamation property can be chosen
so that
$r[B]\cap s[C]=r\circ f[A]=s\circ g[A]$, and
$\pmb{D}$ has no additional relations on its universe other than those inherited from $\pmb{B}$ and $\pmb{C}$.

Let $\pmb{A}$ be a structure for $L$. For each $X\subseteq A$, there is a smallest substructure containing $X$, called the substructure generated by $X$. A substructure is called finitely generated if it is generated by a finite set. A structure is locally finite if all its finitely generated substructures are finite. The age of $\pmb{A}$, Age$(\pmb{A})$  is the class of all finitely  generated structures in $L$ which can be embedded in $\pmb{A}$.
We call $\pmb{A}$ {\em  ultrahomogeneous} if every
isomorphism between finitely generated  substructures of $\pmb{A}$  can be extended to an automorphism
of $\pmb{A}$.
A locally finite, countably infinite, ultrahomogeneous
structure  is called a {\em Fra\"{\i}ss\'{e} structure}.

There is a canonical one-to-one correspondence between
Fra\"{\i}ss\'{e} classes of  finite   structures and Fra\"{\i}ss\'{e} structures,
discovered by Fra\"{\i}ss\'{e}.
If $\pmb{A}$ is a Fra\"{\i}ss\'{e} structure, then Age$(\pmb{A})$ is a
Fra\"{\i}ss\'{e} class of finite   structures. Conversely, if $\mathcal{K}$ is a Fra\"{\i}ss\'{e} class of  relational structures, then there is a unique Fra\"{\i}ss\'{e}
structure, called  the Fra\"{\i}ss\'{e} limit of $\mathcal{K}$, denoted by Flim$(\mathcal{K})$, whose age is exactly
$\mathcal{K}$.

\begin{definition}
Let  $\mathcal{K}$ be a Fra\"{\i}ss\'{e} class  of finite ordered binary relational structures. We say that $\mathcal{K}$ satisfies the {\em Ramsey property} if $\mathcal{K}$ is a Ramsey class.
That is, for each
$\pmb{A},\pmb{B}\in\mathcal{K}$
such that $\pmb{A}\le \pmb{B}$ and
for every positive integer $l\geq 2$, there exists $\pmb{C}\in \mathcal{K}$ such that
$\pmb{C}\longrightarrow (\pmb{B})^{\pmb{A}}_{l}$.
\end{definition}

Given an ordered   binary relational signature
$L=\{<,R_0,\dots, R_{k-1}\}$,
let $L^-$  denote $\{R_0,\dots, R_{k-1}\}$.
An $L^-$-structure  $\pmb{A}$ is called {\em irreducible} if for any two elements $x,y \in A$, there is some relation $R\in L^-$ such that either $R^{\pmb{A}}(x,y)$
or  $R^{\pmb{A}}(y,x)$
 holds.
Given a set $\mathcal{F}$ of  finite $L^-$-structures, let Forb$(\mathcal{F})$ denote the class of finite $L^-$-structures  $\pmb{A}$
such that no member of $\mathcal{F}$ embeds into $\pmb{A}$.
It is well-known that a  \Fraisse\ class  in signature  $L^-$ has free amalgamation if and only if it is of the form Forb$(\mathcal{F})$ for  some set $\mathcal{F}$ of finite irreducible $L^-$-structures.
It follows from results of
 Ne\v{s}et\v{r}il and R\"{o}dl in \cite{JV77,JV83} that all \Fraisse\ classes in signature $L$ for which the $L^-$-reduct has free amalgamation
has the Ramsey property.

For $k\geq 3$, a graph $G$ is called  {\em $k$-clique free} if for any $k$ vertices in $G$, there is at least
one pair with no edge between them; in other words, no $k$-clique embeds into $G$ as
an induced subgraph.
 An oriented graph $G=\langle V(G), E(G)\rangle$ is  a relational structure, where $V(G)$
denotes its vertex set and $E(G)\subseteq V(G)\times V(G)$ denotes its  directed edge relation, that is,  $E(G)\subseteq V(G)\times V(G)$ is an irreflexive binary relation  such that for all $x, y\in V(G)$,
$(x, y)\in E(G)$ implies  $ (y, x)\notin E(G)$. A tournament $G$ is an
oriented graph  such that for all $x\neq y$,  either $(x, y)\in E(G)$ or $(y, x)\in E(G)$.
A partial order with a linear extension is a structure  $\pmb{P}=\langle P,<^{\pmb{P}},R^{\pmb{P}}\rangle$ where $R^{\pmb{P}}$ is a partial  ordering on  $P$,
$<^{\pmb{P}}$ is a linear ordering on $P$,  and whenever $x\ne y$ and $R^{\pmb{P}}(x,y)$ holds, then also $x<^{\pmb{P}}y$ holds.

\begin{example}
Let $\mathcal{OG}$, $\mathcal{OG}_k$,\ $\mathcal{OOG}$,  $\mathcal{OT}$, and $\mathcal{OPO}$ denote  the  Fra\"{\i}ss\'{e} classes of all finite  ordered graphs,  finite  ordered $k$-clique free graphs  ($k\geq 3$), finite  ordered oriented graphs,  finite  ordered tournaments, and finite partial orders with a linear extension, respectively.
Each of these classes
has the Ramsey property.

The Ramsey property for
 $\mathcal{OG}$,  $\mathcal{OG}_k$, $\mathcal{OOG}$, and  $\mathcal{OT}$,
   are special cases of a  theorem of
   Ne\v{s}et\v{r}il-R\"{o}dl (\cite{JV77,JV83});
   the Ramsey property for
   that $\mathcal{OG}$ and $\mathcal{OT}$ follow from independent work of Abramson and Harrington in \cite{AH78}.
 The Ramsey property for  $\mathcal{OPO}$ was announced by  Ne\v{s}et\v{r}il and R\"{o}dl in \cite{NR84}, and the first proof was published
 by Paoli, Trotter, and Walker
  in \cite{PTW85}.
\end{example}

Let $\mathcal{K}$ be a Fra\"{\i}ss\'{e} class of  finite ordered 	binary relational structures  with the Ramsey property.
We may assume the universe of Flim$(\mathcal{K})$
 is  $\omega$,   so that the universe is well-ordered.
For $n\in \omega$, let Flim$(\mathcal{K})_n$ be the initial segment of   Flim$(\mathcal{K})$ on $\{0,\dots,n\}$.

\begin{definition}\label{defn.T_max}
Let $\mathcal{K}$ be a Fra\"{\i}ss\'{e} class of finite ordered binary relational structures with the Ramsey property, and let $T_{\max}\subseteq \omega^{<\omega}$ be the nonempty tree such that for each $t\in  T_{\max}$,
$$
\mbox{succ}_{T_{\max}}(t)=\{t^\frown\langle0\rangle, t^\frown\langle1\rangle, \dots,  t^\frown\langle|t|\rangle\}.
$$
For $t\in T_{\max}$, we define $\pmb{F_t}\in \mathcal{K}$  to have universe
 $F_t:=\mbox{succ}_{T_{\max}}(t)$, ordered by the
lexicographical ordering, such that $\pmb{F_t}$ is isomorphic to  Flim$(\mathcal{K})_{|t|}$.
\end{definition}

\begin{definition}\label{defn.F_max}
Let  $\mathcal{K}$ be a     Fra\"{\i}ss\'{e} class  of   finite ordered binary relational structures  with the Ramsey property.
The tree $T_{\max}$ and $\pmb{F_t}\in \mathcal{K}  \ (t\in T_{\max})$ induce a structure $\pmb{F_{\max}}$  on the universe $F_{\max}:=[T_{\max}]$, ordered lexicographically, with the binary relations $R^{\pmb{F_{\max}}}_\ell$, $\ell<k$
(where $k$ is the cardinality of the signature $L$),
 as follows:
$$
\forall \ x, y\in [T_{\max}],\  (x, y)\in
R_\ell^{\pmb{F_{\max}}}\Longleftrightarrow
(x\upharpoonright (|x\cap y|+1), y\upharpoonright (|x\cap y|+1))\in R_\ell^{\pmb{F_{x\cap y}}}.
$$
\end{definition}

\begin{lemma}
Let $\mathcal{K}$ be a Fra\"{\i}ss\'{e} class of finite ordered binary relational structures with the Ramsey property.
If $\pmb{F}\in \mathcal{K}$  and $t\in T_{\max}$, then there is $s\in T_{\max}$
with $t\subseteq s$ such that $\pmb{F}$ embeds into $\pmb{F_s}$.
\end{lemma}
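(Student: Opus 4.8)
The plan is to reduce the lemma to two standard facts: the universality of the Fra\"{\i}ss\'{e} limit $\mathrm{Flim}(\mathcal{K})$, and the observation that, by Definition \ref{defn.T_max}, the structures $\pmb{F_s}$ realize arbitrarily long initial segments of $\mathrm{Flim}(\mathcal{K})$ as $s$ ranges over the extensions of $t$ in $T_{\max}$. The whole content is in unwinding these definitions.

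First I would invoke that $\mathrm{Age}(\mathrm{Flim}(\mathcal{K}))=\mathcal{K}$, so that $\pmb{F}$, being a member of $\mathcal{K}$, admits an embedding $\pi:\pmb{F}\hookrightarrow\mathrm{Flim}(\mathcal{K})$. The image $\pi[F]$ is a finite subset of the universe $\omega$ of $\mathrm{Flim}(\mathcal{K})$; letting $n$ be its largest element in the natural well-order on $\omega$, we have $\pi[F]\subseteq\{0,\dots,n\}$. Since an embedding into a structure restricts to an embedding into any induced substructure containing its image, $\pi$ already witnesses $\pmb{F}\hookrightarrow\mathrm{Flim}(\mathcal{K})_n$.

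Next I would use the shape of $T_{\max}$: every node of length $m$ has $m+1\ge 1$ immediate successors, so $t$ can be extended to some $s\in T_{\max}$ with $t\subseteq s$ and $|s|\ge n$ (taking $s=t$ if $|t|\ge n$ already, and otherwise repeatedly passing to an immediate successor until the length reaches $n$). Because $\{0,\dots,n\}\subseteq\{0,\dots,|s|\}$ and both $\mathrm{Flim}(\mathcal{K})_n$ and $\mathrm{Flim}(\mathcal{K})_{|s|}$ are the induced substructures of $\mathrm{Flim}(\mathcal{K})$ on these nested initial segments, the former is a substructure of the latter. By Definition \ref{defn.T_max}, $\pmb{F_s}\cong\mathrm{Flim}(\mathcal{K})_{|s|}$, so composing the chain $\pmb{F}\hookrightarrow\mathrm{Flim}(\mathcal{K})_n\hookrightarrow\mathrm{Flim}(\mathcal{K})_{|s|}\cong\pmb{F_s}$ yields the desired embedding of $\pmb{F}$ into $\pmb{F_s}$.

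I do not expect a genuine obstacle. The one point requiring care is that the relevant initial segment $\{0,\dots,n\}$ is taken with respect to the natural well-order on the universe $\omega$ rather than the structure's own order $<^{\mathrm{Flim}(\mathcal{K})}$; but since $\pi[F]$ is merely a finite set of naturals it lies in some such initial segment regardless of how $<^{\mathrm{Flim}(\mathcal{K})}$ orders it, and passing to the induced substructure preserves the order and all the relations $R_\ell$ that $\pi$ already respects. This lemma is, in effect, the statement that $T_{\max}$ plays the role for $\pmb{F_{\max}}$ that a $G_{\max}$-tree plays for $G_{\max}$ in the graph setting.
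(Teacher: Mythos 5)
Your proof is correct and takes essentially the same route as the paper's, which also reduces the lemma to the fact that $\mathrm{Age}(\mathrm{Flim}(\mathcal{K}))=\mathcal{K}$ together with $\pmb{F_s}\cong \mathrm{Flim}(\mathcal{K})_{|s|}$. You merely spell out the step the paper leaves implicit (the finite image lands in some initial segment $\{0,\dots,n\}$, and $t$ can be extended to $s$ with $|s|\ge n$), which is a harmless and accurate elaboration.
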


\begin{proof}
Since  $\pmb{F_t}$ is isomorphic to  Flim$(\mathcal{K})_{|t|}$,
 it suffices
to prove that each $\pmb{F}\in \mathcal{K}$ embeds into   Flim$(\mathcal{K})$ on universe $\omega$.
Now the age of Flim$(\mathcal{K})$ is exactly $\mathcal{K}$. So each $\pmb{F}\in \mathcal{K}$ embeds into   Flim$(\mathcal{K})$ on  universe $\omega$.
\end{proof}

\begin{definition}\label{defn.2.6}
(1) Suppose that $T$ is  a  subtree of $T_{\max}$ and $t\in T$. Let
$\pmb{F^T_t}$ denote the induced substructure of $\pmb{F_t}$ on the universe $F^T_t:=\mbox{succ}_{T}(t)$. We define $\pmb{F(T)}$ to
be the induced  substructure of $\pmb{F_{\max}}$ on the universe $F(T):=[T]$.

(2) Let $\mathcal{K}$ be a Fra\"{\i}ss\'{e} class of finite ordered binary relational structures    with the Ramsey property. A subtree $T$ of $T_{\max}$ is called an
{\em $\pmb{F_{\max}}$-tree}
 if for every  $\pmb{F}\in \mathcal{K}$  and every $t\in T$, there is $s\in T$
with $t\subseteq s$ such that $\pmb{F}$ embeds into $\pmb{F^T_s}$.
In particular, $T_{\max}$ is an $\pmb{F_{\max}}$-tree.
\end{definition}

\begin{definition}
Let $\mathcal{K}$ be a Fra\"{\i}ss\'{e} class of finite ordered binary relational structures with the Ramsey property. A sequence $(T_j)_{j\in \omega}$ is a {\em fusion sequence} with witness $(m_j)_{j\in \omega}$ if the
following hold:
\begin{enumerate}
\item
 $(m_j)_{j\in \omega}$ is a strictly increasing sequence of natural numbers.
\item
 For all $j, l\in  \omega$, if $j< l$, then $T_l$
is an $\pmb{F_{\max}}$-subtree of $T_j$ such that
$T_j(m_j)=T_l(m_j)$.
\item
 For every $\pmb{F}\in \mathcal{K}$ , every $j\in \omega$, and every $t\in T_j(m_j)$,
there is $l> j$ such that $t$ has an extension $s$ in $T_l$ such that $|s|< m_l$ and
$\pmb{F}$ embeds into $\pmb{F^{T_l}_s}$.
\end{enumerate}
\end{definition}

One can   check that if $(T_j)_{j\in \omega}$ is a fusion sequence witnessed by
$(m_j)_{j\in \omega}$, then the fusion $\bigcap_{j\in \omega}
T_j=
\bigcup_{j\in \omega}
(T_j\cap \omega^{\leq m_j})$ is an $\pmb{F_{\max}}$-tree.


\section{Types}


\begin{definition}
Let $T$  be an $\pmb{F_{\max}}$-tree  and $\pmb{F}$ a finite induced substructure of $\pmb{F(T)}$.
We define $\triangle(\pmb{F})$ and $\pmb{F}^{\vee}$ as follows:
$$\triangle(\pmb{F})=\max\{|x\cap y| : x, y\in F\wedge x\neq y\},$$
$$\pmb{F}^{\vee}=\{x\upharpoonright (\triangle(\pmb{F})+1) : x\in F\}.$$
\end{definition}

\begin{example}
Let $\mathcal{K}=\mathcal{OG}$ and $\pmb{H}\in \mathcal{K}$ as in Figure 1, where $H=\{x, y, z\}$,  $x=0000\dots$,
$y=0100\dots$, and $z=0111\dots$.  Then $\triangle(\pmb{H})=2$ and $\pmb{H}^{\vee}=\{u, v, w\}$, where $u=000, v=010$ and $w=011$.

\begin{center}
\centering
\includegraphics[totalheight=2.2in]{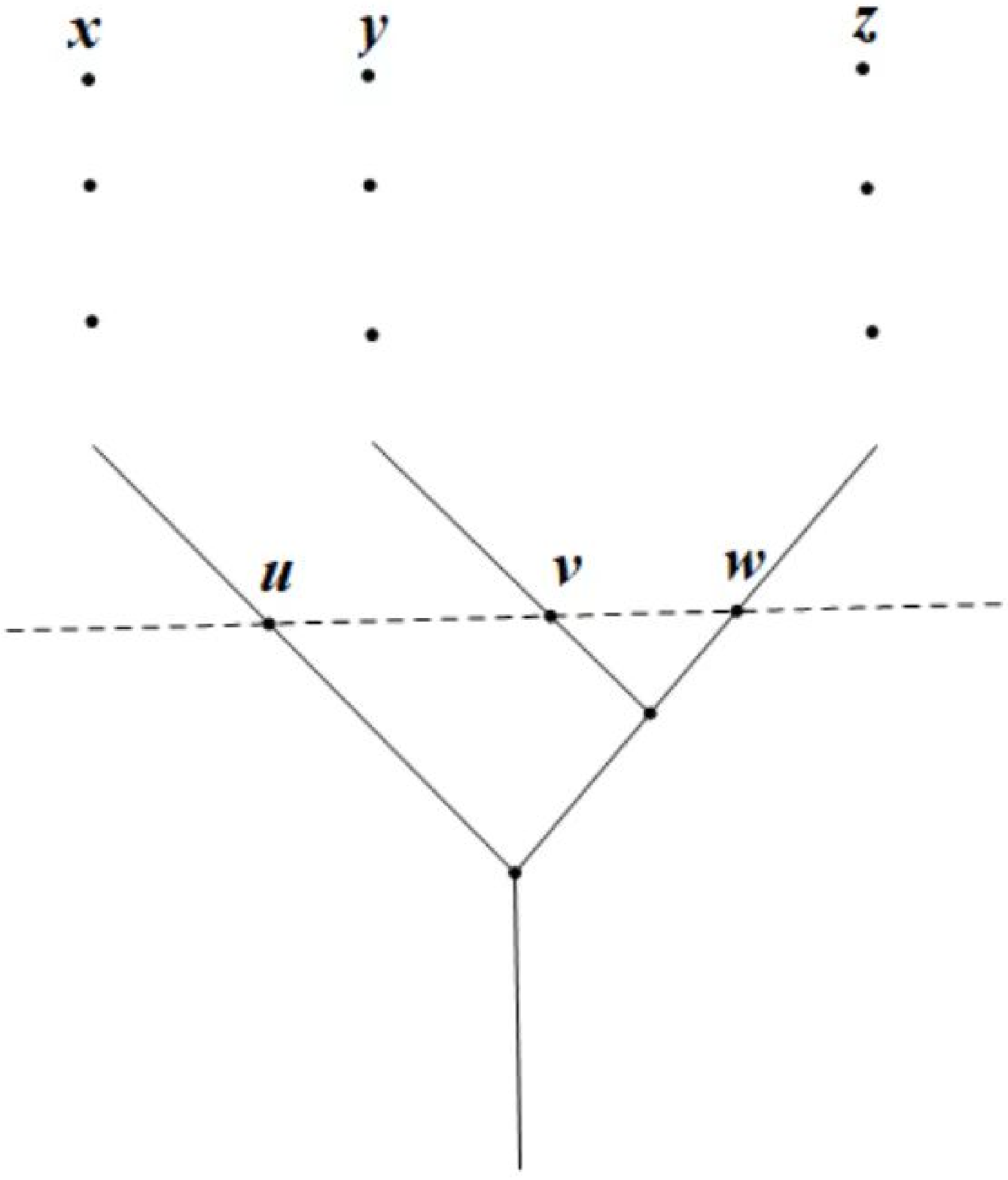}\\Figure 1:\ $\triangle(\pmb{H})$ and $\pmb{H}^{\vee}$
\end{center}
\end{example}

\begin{definition}
Let $\pmb{F}$ and $\pmb{F^{\prime}}$ be finite induced substructure of $\pmb{F_{\max}}$. We
say  $\pmb{F}$ and $\pmb{F^{\prime}}$    are {\em strongly isomorphic} if there exists an isomorphism $\varphi :  \pmb{F}\longrightarrow \pmb{F^{\prime}}$
such that $\forall \ \{x_0, y_0\}, \{x_1, y_1\}\in [F]^2$,
$$|x_0\cap y_0|\leq |x_1\cap y_1|\Longleftrightarrow |\varphi(x_0)\cap \varphi(y_0)|\leq |\varphi(x_1)\cap \varphi(y_1)|.$$
\end{definition}

Clearly, strong isomorphism is an equivalence relation. By a {\em type} we mean a strong
isomorphism equivalence class. In particular,
there are only finitely many types inside an isomorphism class.

Suppose that $\pmb{F}$ and $\pmb{H}$ are structures. Let $\big(\mathop{}_{\pmb{H}}^{\pmb{F}}\big)$
be the set of all
induced substructures $\pmb{H^{\prime}}$ of $\pmb{F}$ isomorphic to $\pmb{H}$. If  $\pmb{F}$ is a  induced  substructure of $\pmb{F_{\max}}$  and $\tau$ is a type,
we let $\big(\mathop{}_{\tau}^{\pmb{F}}\big)$
be the set of all induced  substructures of $\pmb{F}$ of type  $\tau$.

\begin{example}

Let $\mathcal{K}=\mathcal{OG}$ and $\pmb{H}\in \mathcal{K}$ be as in Figure 2, where $H=\{v_0, v_1, v_2\}$.

\begin{center}
\centering
\includegraphics[totalheight=0.4in]{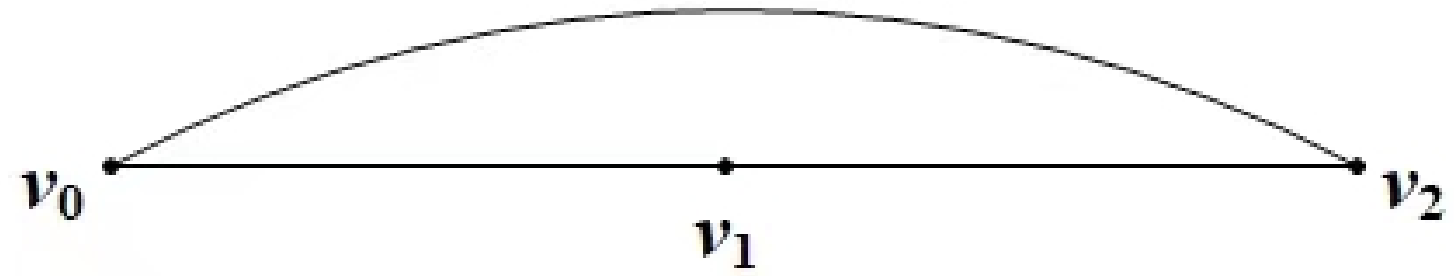}\\Figure 2
\end{center}
Then there are   3 types for $\pmb{H}$ as in Figure 3.
\begin{center}
\centering
\includegraphics[totalheight=2.0in]{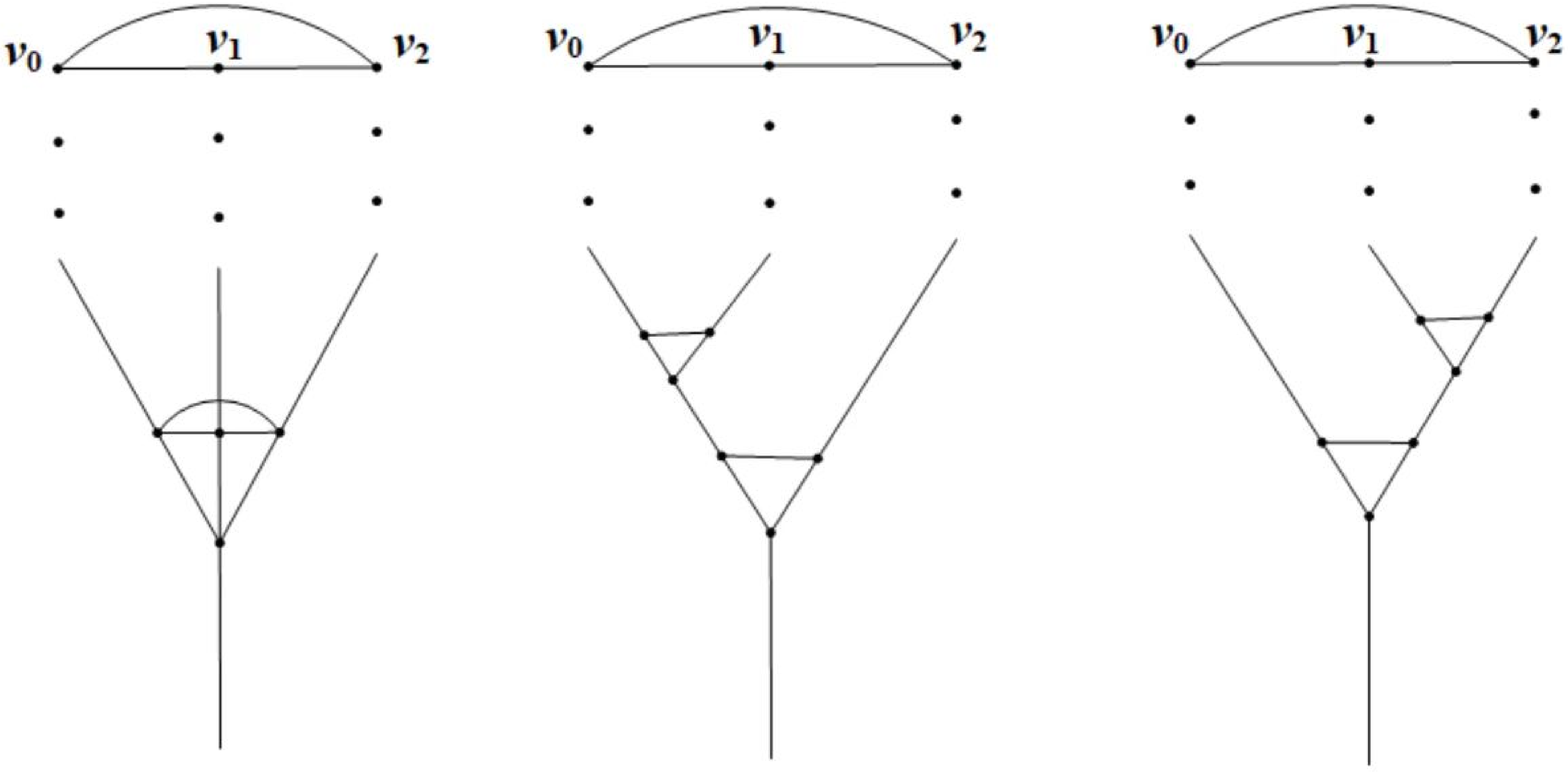}\\Figure 3:\ 3 types for $\pmb{H}$
\end{center}

\noindent Let $\pmb{H}\in \mathcal{K}$ be as in Figure 4. Then there are   2 types for $\pmb{H}$  as in Figure 5.
\begin{center}
\centering
\includegraphics[totalheight=0.4in]{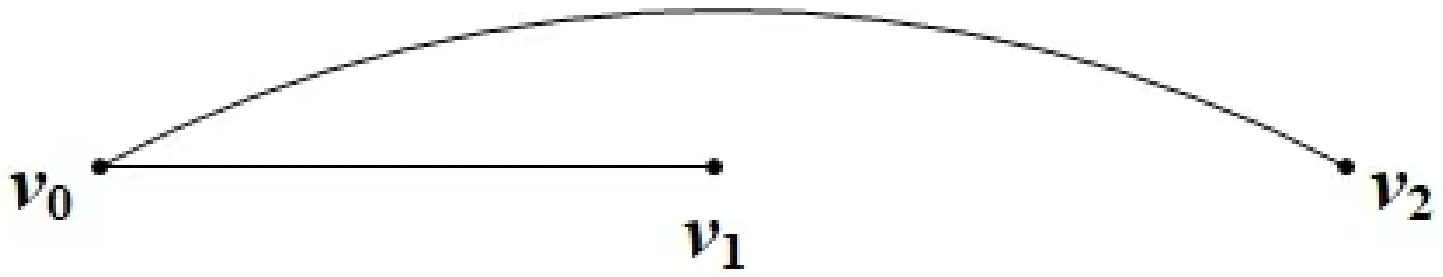}\\Figure 4
\end{center}

\begin{center}
\centering
\includegraphics[totalheight=2.2in]{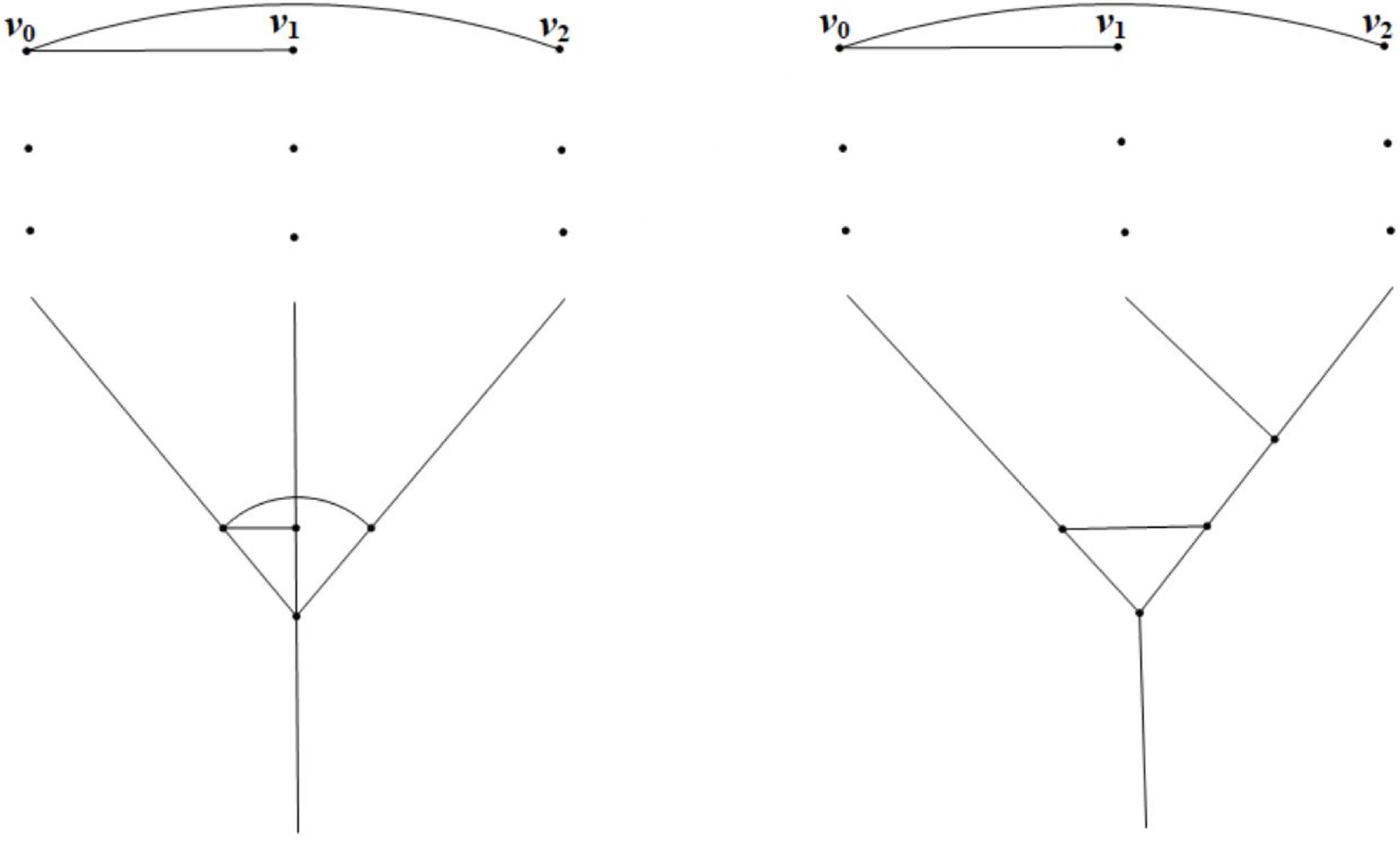}\\Figure 5:\ 2 types  for $\pmb{H}$
\end{center}

\vspace*{0.2cm}

\noindent Let $\pmb{H}\in \mathcal{K}$ be as in Figure 6. Then there is only 1 type  for $\pmb{H}$ as in Figure 7.

\begin{center}
\centering
\includegraphics[totalheight=0.4in]{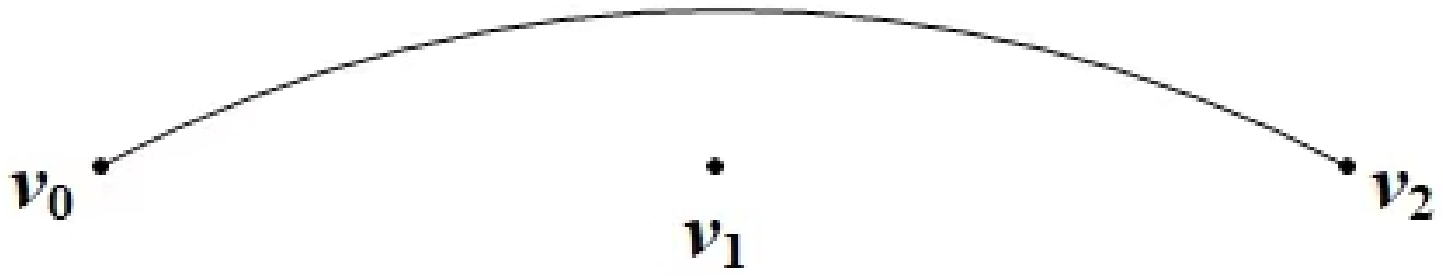}\\Figure 6
\end{center}

\begin{center}
\centering
\includegraphics[totalheight=2.2in]{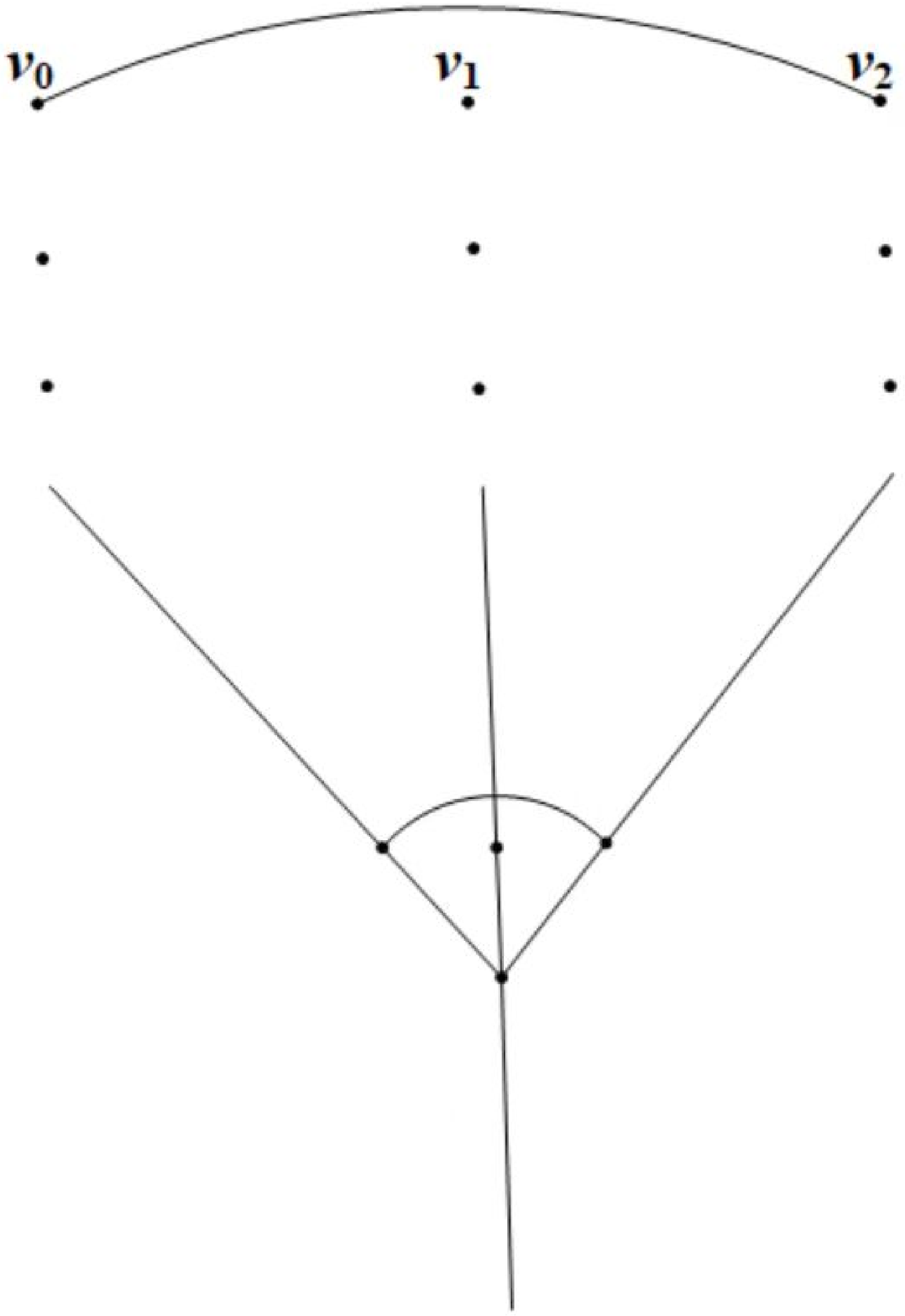}\\Figure 7:\ 1 type  for $\pmb{H}$
\end{center}

\end{example}

\begin{example}

Let $\mathcal{K}=\mathcal{OT}$ and $\pmb{F}\in \mathcal{K}$ be  as in Figure 8.

\begin{center}
\centering
\includegraphics[totalheight=0.4in]{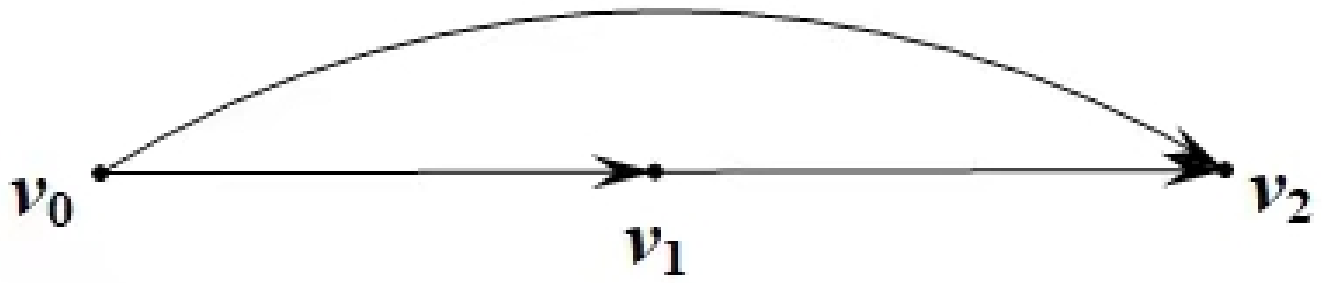}\\Figure 8
\end{center}

\noindent Then there are   3 types for $\pmb{F}$  as in Figure 9.

\begin{center}
\centering
\includegraphics[totalheight=2.2in]{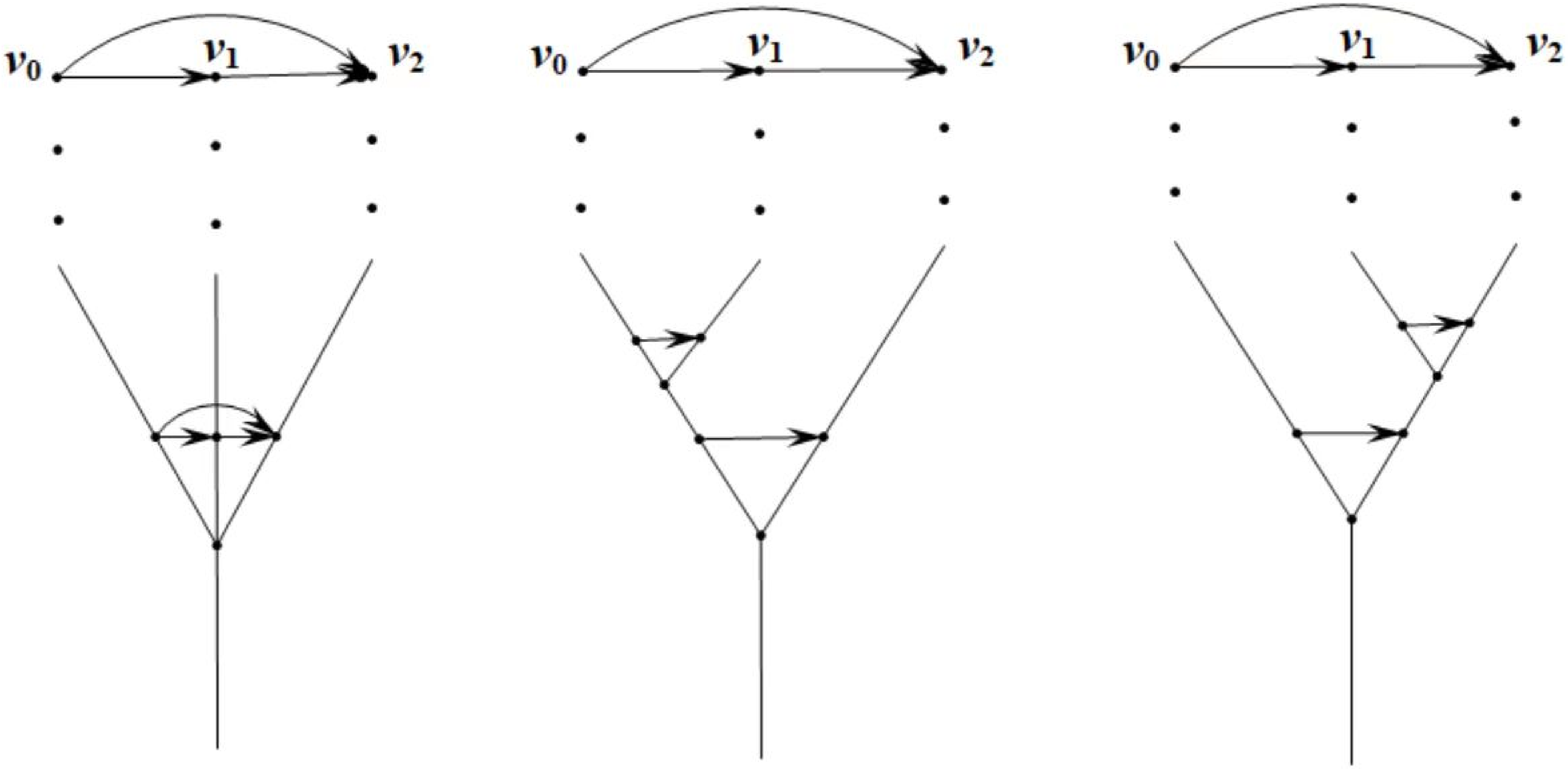}\\Figure 9:\ 3 types  for $\pmb{F}$
\end{center}

\noindent If  $\pmb{F}\in \mathcal{K}$ is as in Figure 10, then there are 2 types  for $\pmb{F}$ as in Figure 11.

\begin{center}
\centering
\includegraphics[totalheight=0.4in]{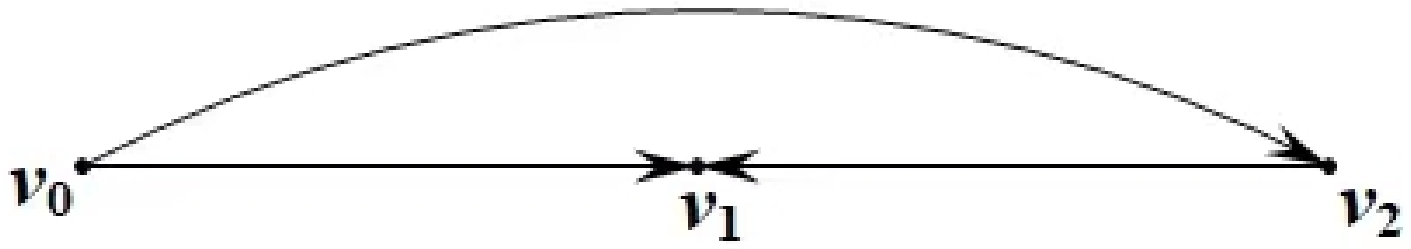}\\Figure 10
\end{center}

\begin{center}
\centering
\includegraphics[totalheight=2.2in]{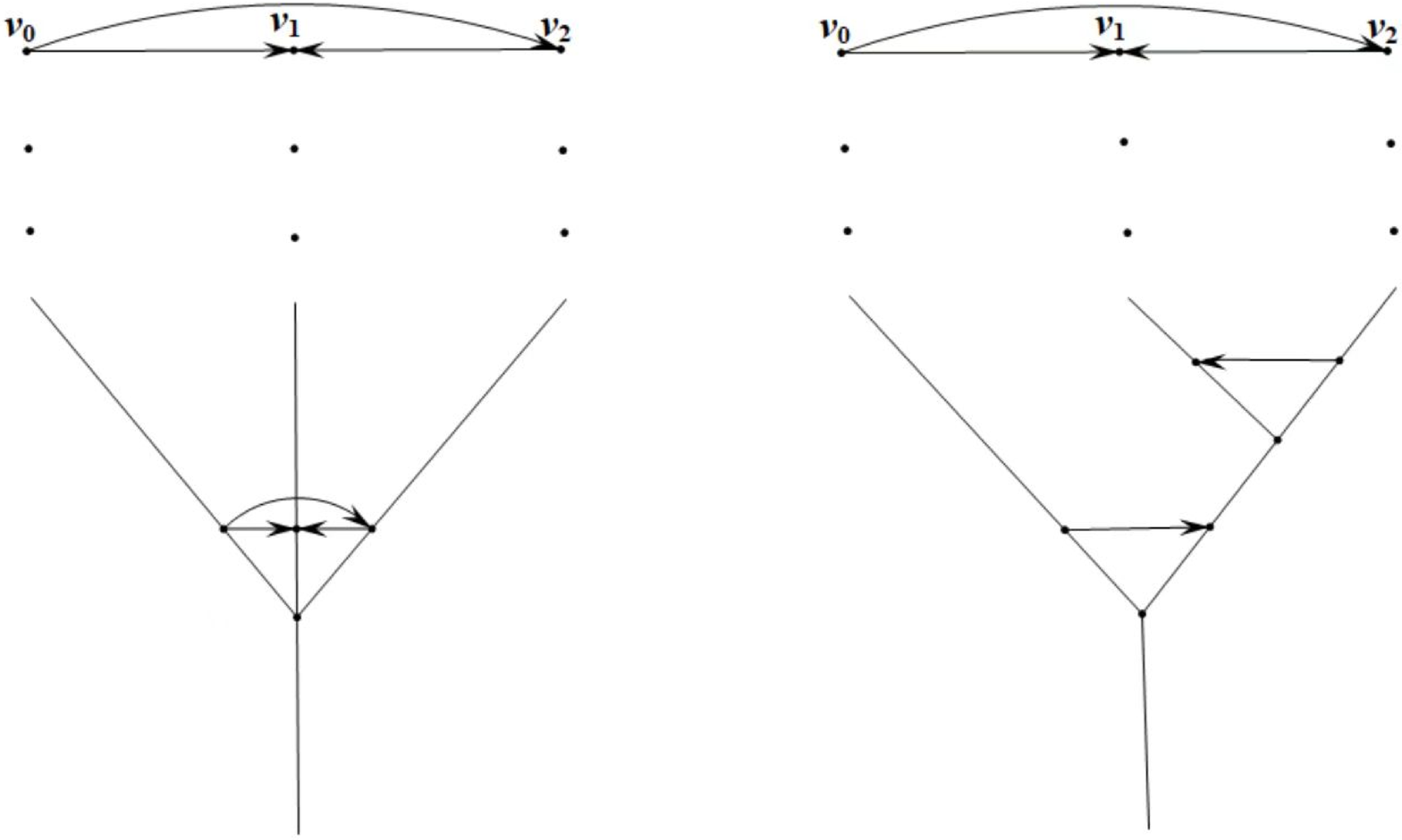}\\Figure 11:\ 2 types  for $\pmb{F}$
\end{center}

Let  $\pmb{F}\in \mathcal{K}$ be as in Figure 12.

\begin{center}
\centering
\includegraphics[totalheight=0.4in]{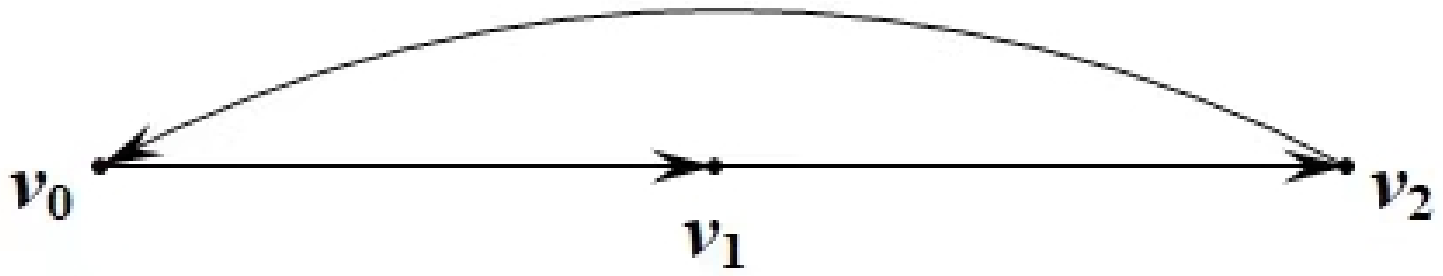}\\Figure 12
\end{center}

Then there is  only 1 type  for $\pmb{F}$ as in Figure 13.

\begin{center}
\centering
\includegraphics[totalheight=2.2in]{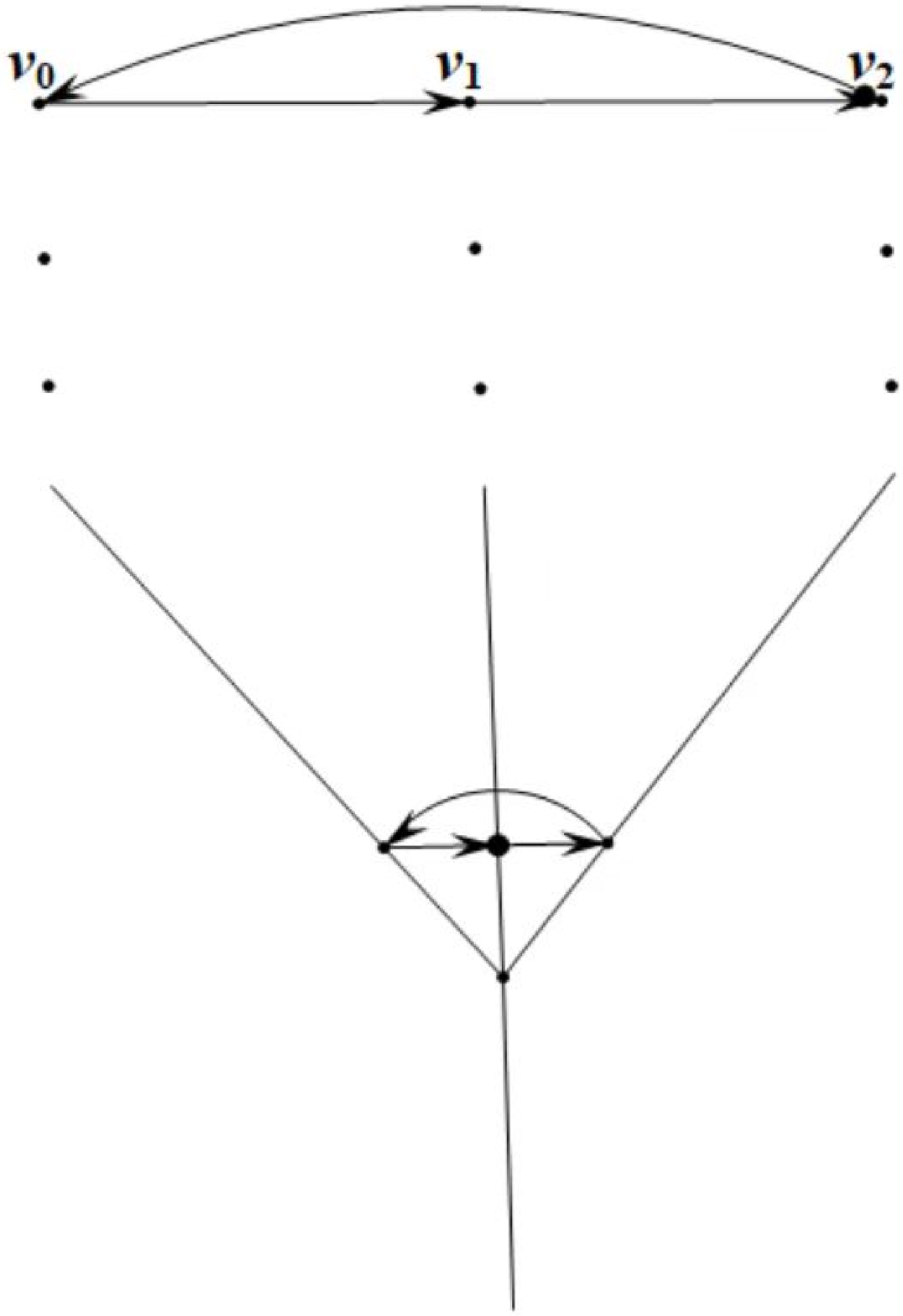}\\Figure 13:\ 1 type  for $\pmb{F}$
\end{center}

\end{example}

\begin{example}

Suppose that  $\mathcal{K}=\mathcal{OPO}$ and $\pmb{H}\in \mathcal{K}$ as in Figure 14.

\begin{center}
\centering
\includegraphics[totalheight=0.5in]{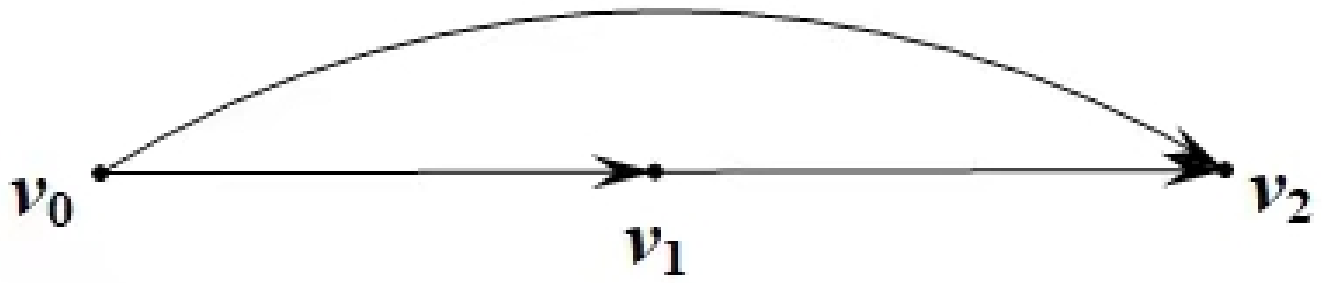}\\Figure 14
\end{center}

\noindent Here

\begin{center}
\centering
\includegraphics[totalheight=0.25in]{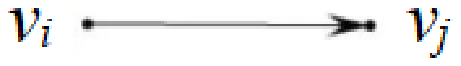}
\end{center}
denotes  that $R(v_i, v_j)$, where $R$ is a partial order. Then there are 3 types  for $\pmb{H}$ as in Figure 15.

\begin{center}
\centering
\includegraphics[totalheight=2.2in]{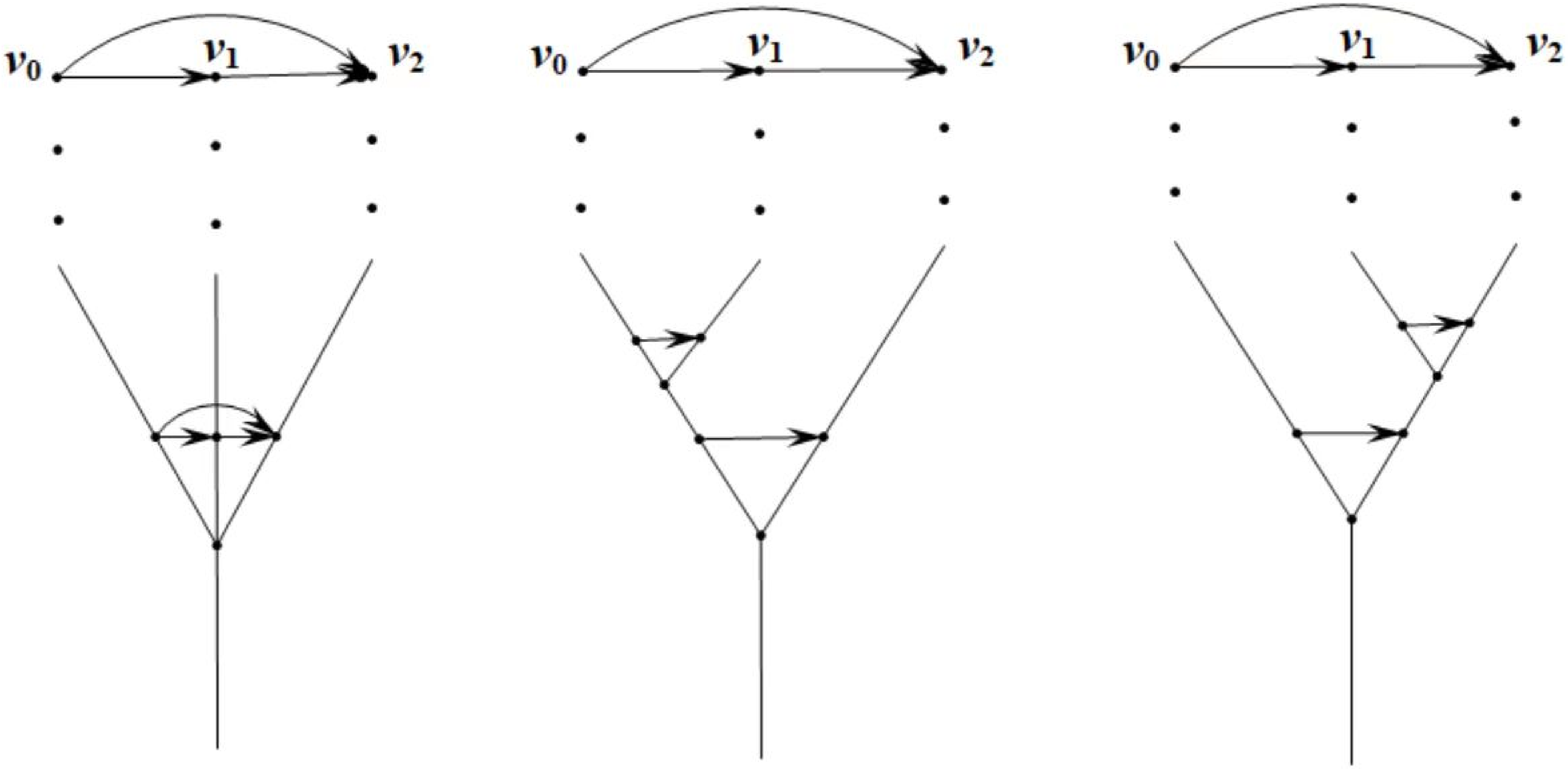}\\Figure 15:\ 3 types  for $\pmb{H}$
\end{center}

\vspace*{0.3cm}

\noindent Let $\pmb{H}\in \mathcal{K}$ be as in Figure 16. Then there are 2 types  for $\pmb{H}$ as in Figure 17.

\begin{center}
\centering
\includegraphics[totalheight=0.5in]{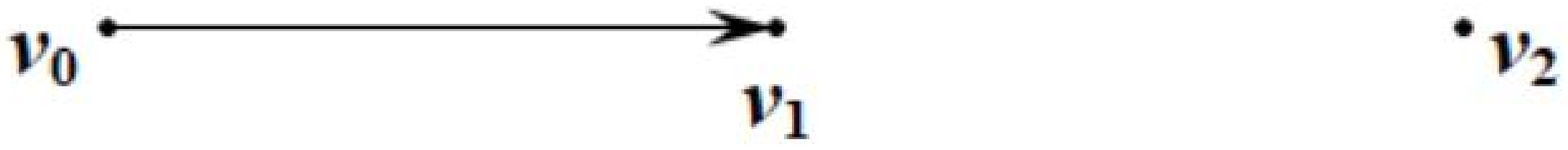}\\Figure 16
\end{center}

\begin{center}
\centering
\includegraphics[totalheight=2.2in]{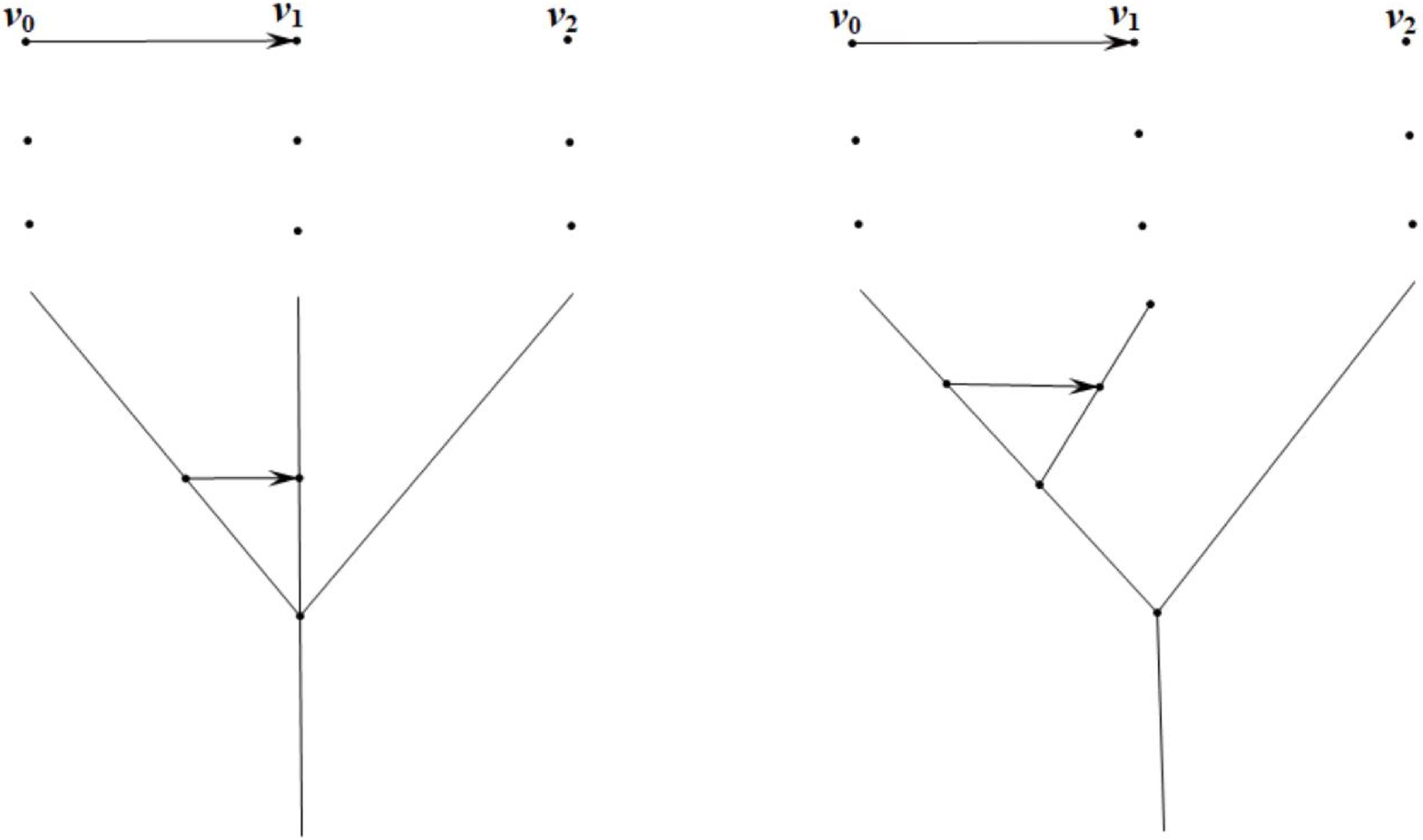}\\Figure 17:\ 2 types  for $\pmb{H}$
\end{center}

\vspace*{0.3cm}

\noindent Let $\pmb{H}\in \mathcal{K}$ be as in Figure 18. Then there is only 1 type  for $\pmb{H}$ as in Figure 19.

\vspace*{0.3cm}

\begin{center}
\centering
\includegraphics[totalheight=0.4in]{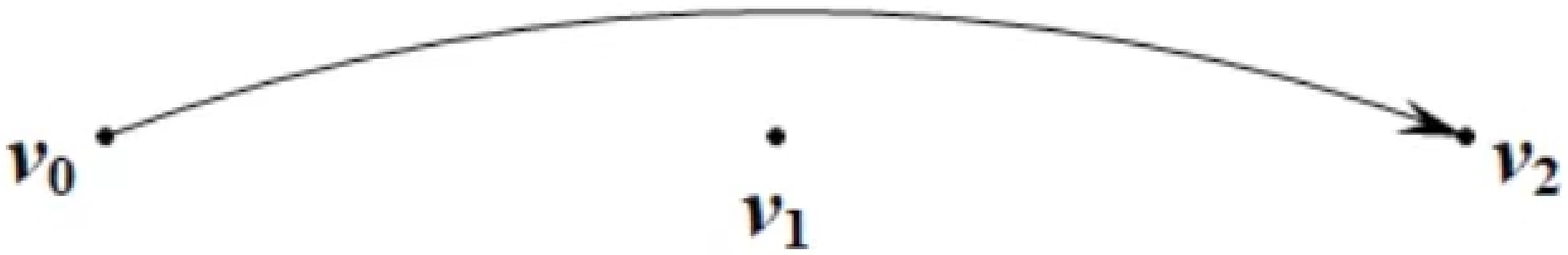}\\Figure 18
\end{center}

\vspace*{0.2cm}

\begin{center}
\centering
\includegraphics[totalheight=2.2in]{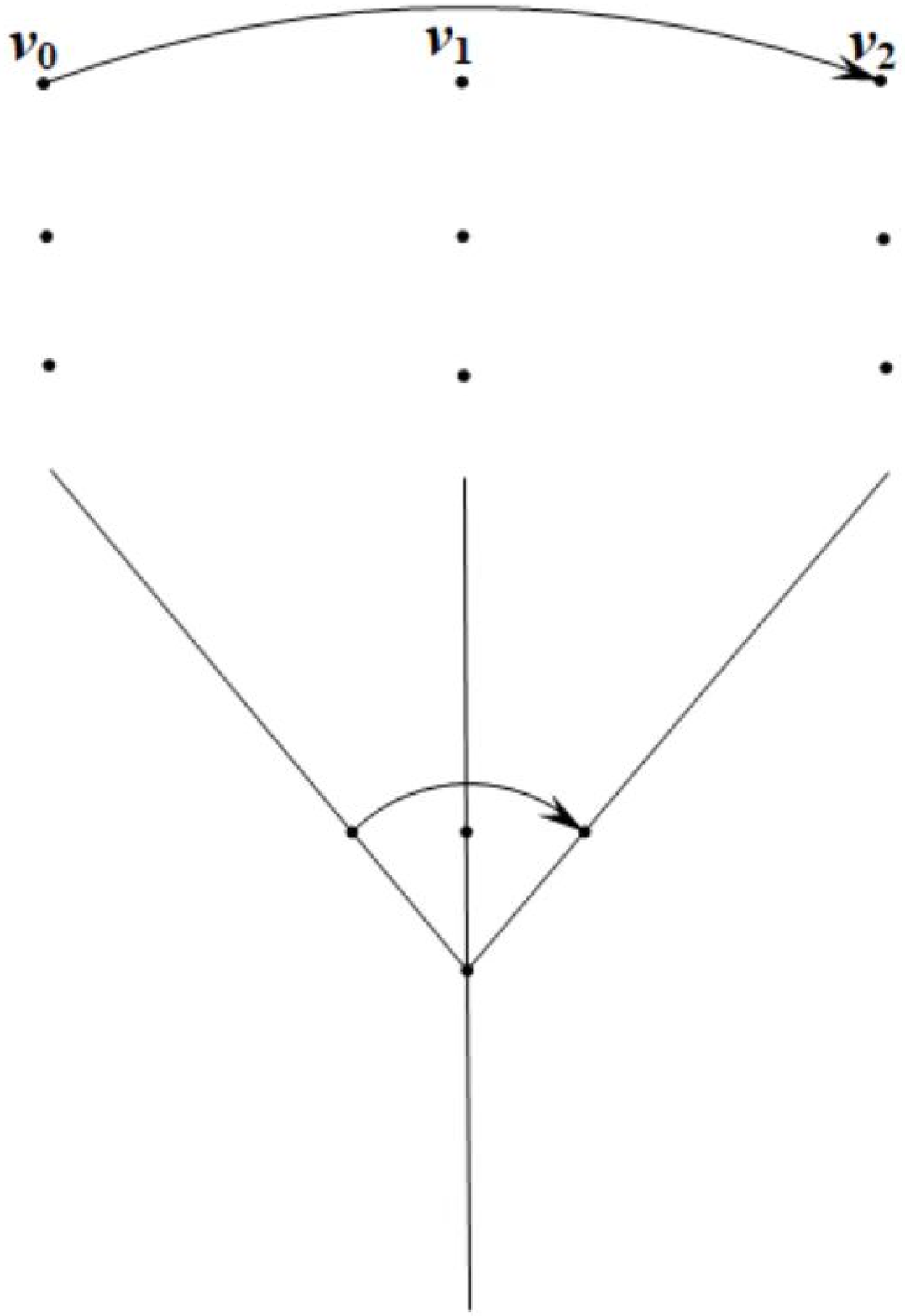}\\Figure 19:\ 1 type  for $\pmb{H}$
\end{center}

\end{example}

\vspace*{0.1cm}



\section{Topological Ramsey spaces for coding  inverse limit structures for finitely many  binary relations}


This section is essentially  work of Zheng from Section 3 in \cite{YY18}.
Her work is straightforwardly extended  from the  context of finite  ordered graphs to
the broader context of Fra\"{\i}ss\'{e}  classes of  finite ordered binary relational structures satisfying the Ramsey property.
We include it in this paper for the reader's convenience, making a few modifications.

Let $\mathcal{K}$ be a Fra\"{\i}ss\'{e} class of finite ordered binary relational structures satisfying the Ramsey property, with  signature $L=\{<, R_0,\dots,R_{k-1}\}$, where each $R_{\ell}$  $(\ell<k)$ is a binary relation.
We fix a type $\tau$ and build a topological Ramsey space $\mathcal{F}_\infty(\tau)$.
We may denote this space by $\mathcal{F}_\infty$ when the type is clear
from the context.
Let $m+1$ be the number of elements for a finite ordered structure in  $\tau$.
Let $\{\pmb{F_i} : i< \omega\}$ enumerate the set of all finite structures in $\mathcal{K}$, up to isomorphism, labelled
so that  for every $i < j < \omega$,  $|F_i| \le |F_j|$.

Let $S$ be a tree.
A node $t\in S$ is a {\em splitting node} if $|\mbox{succ}_S (t)| > 1$. We
say $S$ is {\em skew} if $S$ has at most one splitting node at each level, i.e.,
$$
\forall \ n\in  \omega,\ |\{t\in  S\cap \omega^n : |\mbox{succ}_S(t)| > 1\}|\le 1.
$$
Notice  that if
 $S$ is  a skew tree and  $i>0$ is given,
then each node
 $t\in S$  for which $\pmb{F^S_t}
\cong \pmb{F_i}$  is a splitting node, since the structure $\pmb{F_i}$  has universe of size at least two.
Thus,  any two nodes in the set
$\{t\in  S : \pmb{F^S_t}
\cong \pmb{F_i}\}$ have different lengths, so the nodes in this set can be enumerated in order of increasing length.  This will be useful in part (iii) of (2) in the next definition.

\begin{definition}  Let $\tau$ be a type and $m+1$ be the number of elements for a finite ordered structure in  $\tau$. We define the space $(\mathcal{F}_\infty, \leq, r)$ as follows.

Let $S$ be a member of   $\mathcal{F}_\infty$ if $S$ is a skew subtree of $T_{\max}$ and when we enumerate the set of
splitting nodes $\{s\in S : |\mbox{succ}_S(s)|> 1\}$ as $\{s_i\}_{i<\omega}$ in the order of length,

(1) there is a finite structure $\pmb{F}\in \tau$  such that
$$\pmb{F}^{\vee}= S\cap \omega^{|s_{m-1}|+1};$$

(2) for all $i> 0$,

\quad (i) $\forall \ s\in  S\cap \omega^{>|s_{m-1}|}, \forall \ u\in \mbox{succ}_S(s)$,
$$\pmb{F^S_{s}}\cong \pmb{F_i}\Rightarrow \exists!\ t\in S\
(u\subseteq t\wedge \pmb{F^S_t}\cong \pmb{F_{i+1}});$$

\quad (ii)  for every pair $s,\ t\in  S\cap \omega^{>|s_{m-1}|}$,
$$\pmb{F^S_s}\cong \pmb{F_i}\wedge \pmb{F^S_t}\cong \pmb{F_{i+1}}\Rightarrow |s| < |t|;$$

\quad (iii) if $\{t\in  S : \pmb{F^S_t}
\cong \pmb{F_i}\}$ is enumerated in  order of increasing length as $\{t_j\}_j$, then
there is $l< \omega$ such that $\{t_j\upharpoonright l\}_j$ is strictly increasing in lexicographical
ordering.
\end{definition}

When we say that $\{s_i\}_{i<\omega}$ is the set of splitting nodes in $S$, we tacitly assume
that the length $|s_i|$ is strictly increasing in $i$.

For $S, U\in \mathcal{F}_\infty$, we write  $S\leq U$ if  and only if $S\subseteq U$. For $l< \omega$ and $S\in \mathcal{F}_\infty$ with the set of splitting nodes $\{s_i\}_{i<\omega}$, we define the finite
approximation $r_l(S)$ as follows: let $$r_0(S)= \emptyset \ \mbox{and} \  r_{l+1}(S) = S\cap \omega^{\leq |s_l|+1}.$$
We specify a few
more definitions that are often used in topological Ramsey spaces. Let $\mathcal{F}_{< \infty}$ denote
the set of all finite approximations, i.e.
$$\mathcal{F}_{< \infty}=\{r_l(S) : S\in \mathcal{F}_{\infty}\wedge l\in \omega\}.$$
For $a, b\in \mathcal{F}_{< \infty}$, let $a \leq_{\small\mbox{fin}} b$   if $a\subseteq b$. Let $|a|=n$ if there is $S\in \mathcal{F}_{\infty}$ with $r_n(S)=a$.
For $a, b\in  \mathcal{F}_{< \infty}$, we write $a\sqsubseteq b$ if there are $l< p<\omega$ and $S\in \mathcal{F}_{\infty}$ such that
$a=r_l(S)$ and $b=r_p(S)$.

For $a\in  \mathcal{F}_{< \infty}$ and $S\in \mathcal{F}_{\infty}$,
depth$_S(a)=\min\{n : a\leq_{\small\mbox{fin}}r_n(S)\}$,
where by convention, $\min \emptyset=\infty$.
We equip the space $\mathcal{F}_{\infty}$ with the Ellentuck topology,
with basic open sets of the form
$$[a, S]=\{X\in  \mathcal{F}_{\infty} : (X\leq S)\wedge (\exists l)(r_l(X)=a)\},$$ for $a\in  \mathcal{F}_{< \infty}$ and $S\in \mathcal{F}_{\infty}$. For $l< \omega$, let
$$\mathcal{F}_l=\{r_l(X) : X\in \mathcal{F}_{\infty}\},$$
$$[l, S]=[r_l(S), S], \mbox{and}$$
$$r_l[a, S]=\{r_l(X) : X\in [a, S]\}.$$
The height of an element $a\in \mathcal{F}_{< \infty}$ is height$(a)=\max_{s\in a} |s|$. In general, $|a|\leq\mbox{height}(a)$.

Now we show that $(\mathcal{F}_{\infty}, \leq, r)$ is a topological Ramsey space by proving that $\mathcal{F}_{\infty}$ is
closed
as a subspace of
$(\mathcal{F}_{< \infty})^\omega$,
and  satisfies the axioms {\bf (A1)-(A4)} as defined in pages 93--94 of  \cite{ST10}.
It is straightforward to check {\bf (A1)-(A3)}.
Moreover,  $\mathcal{F}_{\infty}$ is a closed subset of $(\mathcal{F}_{< \infty})^\omega$ when we identify $S\in  \mathcal{F}_{\infty}$
with $(r_n(S))_{n< \omega}\in (\mathcal{F}_{< \infty})^\omega$  and equip $\mathcal{F}_{< \infty}$ with the discrete topology and $(\mathcal{F}_{< \infty})^\omega$ with the product topology.

\begin{definition}
Let $T\subseteq \omega^{<\omega}$ be a (downwards closed) finitely branching  tree  with no terminal nodes, and let
 $N$ an infinite subset of $\omega$.
A set $U$ is called  a {\em strong subtree}
 of $\bigcup_{n\in N} T\cap \omega^n$ if there is an infinite set $M\subseteq N$ such
that the following conditions hold.
\begin{enumerate}
\item
$U\subseteq \bigcup_{n\in M} T\cap \omega^m$  and $U\cap \omega^m\neq \emptyset$ for all $m\in M$. In this case, we say that $M$
witnesses that $U$ is a strong subtree.
\item
If $m_1< m_2$ are two successive elements of $M$ and if $u\in U\cap \omega^{m_1}$, then every
immediate successor of $u$ in $\bigcup_{n\in N} T\cap \omega^n$  has exactly one extension
in $U\cap \omega^{m_2}$.
\end{enumerate}
\end{definition}

\begin{theorem}[Halpern-L\"{a}uchli \cite{JH66}]\label{thm.HL}
For each $i< d$,
 let $T_i\subseteq \omega^{<\omega}$ be a  finitely branching tree  with no terminal nodes,   let $N\in [\omega]^\omega$, where $d$ is any positive integer, and let
$c :
\bigcup_{n\in N}
\prod_{i<d}T_i\cap \omega^n\longrightarrow p$
be a finite coloring, where $p$ is any positive integer. Then there is an infinite subset $M\subseteq N$ and infinite
strong subtrees $U_i\subseteq \bigcup_{n\in N} T_i\cap \omega^n$ witnessed by $M$ such that $c$ is monochromatic on
$\bigcup_{m\in M}
\prod_{i<d}U_i\cap \omega^m$.
\end{theorem}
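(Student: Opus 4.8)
The plan is to prove Theorem~\ref{thm.HL} by Harrington's forcing argument, which is the cleanest route to the finite-dimensional Halpern--L\"{a}uchli theorem: the forcing serves not to decide any value of the ground-model coloring $c$, but as a bookkeeping device recording exactly when finitely many same-level node-tuples can be simultaneously extended to a common higher level, and the Erd\H{o}s--Rado theorem is then used to extract from it a coherent, monochromatic family. First I would fix a cardinal $\kappa$ large enough that, by the Erd\H{o}s--Rado theorem, every coloring of $[\kappa]^{d}$ with countably many colors admits a homogeneous set of size $\aleph_1$. I then introduce the forcing $\mathbb{P}$ whose conditions are tuples $p=\langle \ell_p,\,a_p,\,\langle t^i_{p,\alpha}:i<d,\ \alpha\in a_p\rangle\rangle$, where $\ell_p\in N$, $a_p\in[\kappa]^{<\omega}$, and $t^i_{p,\alpha}\in T_i\cap\omega^{\ell_p}$, ordered by letting an extension raise the common level while lengthening each chosen node accordingly. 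For each $i<d$ and $\alpha<\kappa$ this adds a branch $\dot b^i_\alpha\in[T_i]$, and the finite branching of the trees ensures that each condition carries only finitely much data at each level.

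Next I would carry out the Erd\H{o}s--Rado extraction in the ground model. To each increasing tuple $\bar\alpha=(\alpha_0<\cdots<\alpha_{d-1})\in[\kappa]^{d}$ I attach a condition $p_{\bar\alpha}$ together with the finite datum consisting of the $c$-color of the node-tuple $\langle t^i_{p_{\bar\alpha},\alpha_i}:i<d\rangle$ and the combinatorial type of $p_{\bar\alpha}$ (its level and the shape of its chosen nodes), and I color $\bar\alpha$ by this datum. Since there are only countably many such data and $\kappa$ is large, this coloring has a homogeneous set $H$ of size $\aleph_1$. Homogeneity pins down a single color $\varepsilon^*<p$ and makes the family $\{p_{\bar\alpha}:\bar\alpha\in[H]^{d}\}$ \emph{coherent}: any two of its members agree on their common indices and can be amalgamated into a single condition. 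This converts the problem into a purely combinatorial statement in the ground model, namely that along $H$ there are arbitrarily high levels carrying $\varepsilon^*$-monochromatic node-tuples in a mutually compatible fashion.

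Finally I would build the strong subtrees $U_i$ and the witnessing level set $M$ by recursion, reading the required extensions off the coherent family indexed by $H$. At a typical stage, given finite approximations to the $U_i$ with a common top level already placed in $M$, I use coherence to locate one higher level $m\in N$ and, for every node $u$ at the current top level of each $U_i$ and every immediate successor of $u$ in $T_i$, a unique extension to level $m$, chosen so that \emph{every} $d$-fold cross product of the newly placed nodes receives color $\varepsilon^*$; I adjoin these extensions to the $U_i$ and put $m$ into $M$. The strong-subtree requirements---one extension in $U_i\cap\omega^{m}$ for each immediate successor of each top node---are preserved by construction, and $c$ is constantly $\varepsilon^*$ on $\bigcup_{m\in M}\prod_{i<d}U_i\cap\omega^{m}$ by the choice of extensions.

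The main obstacle is precisely this simultaneous-extension step: one must guarantee that the homogeneous family is rich enough to realize, \emph{at one common level and across all $d$ coordinates at once}, $\varepsilon^*$-monochromatic extensions of every immediate successor of every current top node. This is where the largeness of $\kappa$ (so that $H$ supplies enough pairwise compatible conditions), the finite branching of the $T_i$ (so that only finitely many extensions are needed per stage), and the coherence extracted from homogeneity must be combined; checking that together they deliver the full strong-subtree condition, rather than merely an unbounded supply of monochromatic tuples, is the technical heart. Should the forcing setup prove awkward, I would fall back on the original inductive combinatorial proof, which replaces the forcing and Erd\H{o}s--Rado step by a double induction on $d$ establishing a stronger ``dense set'' version from which the coloring form follows; but the forcing route is more transparent and meshes better with the tree-coding used elsewhere in this paper.
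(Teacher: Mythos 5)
The paper itself contains no proof of Theorem~\ref{thm.HL}: it is quoted from \cite{JH66} and used as a black box (its only role is in Step~2 of the proof of Lemma~\ref{lem.4.4}). So your proposal is not competing with an argument in the paper but with the classical proofs of Halpern--L\"{a}uchli. The route you chose --- Harrington's forcing argument --- is a legitimate and well-documented one (a complete exposition appears, for instance, in \cite{ND17}), but as written your sketch has two genuine gaps, and they sit exactly at the points that make Harrington's argument work.

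First, you never specify which property the conditions $p_{\vec\alpha}$ attached to tuples $\vec\alpha\in[\kappa]^{d}$ must have. In Harrington's proof this choice is the engine: for each $\vec\alpha$ one uses a pigeonhole-plus-density argument to find a condition $p_{\vec\alpha}$ and a color $\varepsilon_{\vec\alpha}$ such that $p_{\vec\alpha}$ \emph{forces} that the generic branches $\langle \dot b^{i}_{\alpha_i} : i<d\rangle$ realize color $\varepsilon_{\vec\alpha}$ at infinitely many levels. The datum you attach instead --- the $c$-color of the finite node-tuple $\langle t^{i}_{p_{\vec\alpha},\alpha_i} : i<d\rangle$ recorded in the condition --- is ground-model information about one level and carries no commitment about extensions. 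Homogeneity for such data yields many identical-looking conditions, but nothing your recursive construction can draw on when it must repeatedly find \emph{higher} levels in $N$ on which all cross-products of the newly chosen nodes get the color $\varepsilon^{*}$; the forced ``recurrence along the generics'' statement is precisely what supplies those levels, and it is absent from your setup.

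Second, the Erd\H{o}s--Rado exponent is wrong. You color $[\kappa]^{d}$ by the type of $p_{\vec\alpha}$ and assert that homogeneity makes the family coherent (``any two of its members agree on their common indices and can be amalgamated''). That inference fails: knowing that all $p_{\vec\alpha}$ for $\vec\alpha\in[H]^{d}$ have the same isomorphism type says nothing about how two of them interact. Their supports $\vec\delta_{p_{\vec\alpha}}$ and $\vec\delta_{p_{\vec\beta}}$ can overlap --- including in ordinals outside $H$ --- and at shared coordinates the two conditions may assign incomparable nodes. Compatibility of $p_{\vec\alpha}$ and $p_{\vec\beta}$ is a property of the \emph{pair} $(\vec\alpha,\vec\beta)$, hence of (up to) $2d$-element subsets of $\kappa$, which an exponent-$d$ coloring cannot see. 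This is exactly why the standard proof takes $\kappa$ satisfying $\kappa\to(\aleph_1)^{2d}_{\aleph_0}$ and colors $[\kappa]^{2d}$, with the color of a $2d$-set recording the data of $p_{\vec\theta'}$ for every $d$-subtuple $\vec\theta'$ together with the interaction pattern of supports and node assignments; a separate compatibility lemma then converts homogeneity into the amalgamation you invoke. Since both your ``coherence'' claim and the final simultaneous-extension step rest on this, the proof as proposed does not go through; repairing it amounts to reproducing the known $2d$-exponent argument (or falling back, as you note, on the original Halpern--L\"{a}uchli induction, which you do not develop).
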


We observe that
every $S\in \mathcal{F}_{\infty}$ is an $\pmb{F_{\max}}$-tree and
every $\pmb{F_{\max}}$-tree contains some
$S\in \mathcal{F}_{\infty}$ as a subtree.
This is because for all $n< \omega$, we can find infinitely many $m\in \omega$
such that $\pmb{F_n}$ embeds into $\pmb{F_m}$
(in fact,  this holds for all but finitely many $m\in \omega$.)
We define $(\pmb{A_n})_{n<\omega}$ to be a sequence of finite structures in $\mathcal{K}$
such that for each $n< \omega$, the finite structures $\pmb{F_0},\dots, \pmb{F_n}$ embed into $\pmb{A_n}$.

Now we will prove {\bf (A4)} in Lemma 4.4.
For $\mathcal{K}=\mathcal{OG}$,
Zheng proved in
\cite{YY18}
that $(\mathcal{F}_{\infty}, \leq, r)$ is a topological Ramsey space.
So the axiom {\bf (A4)} holds for finite ordered graphs.
 It should be pointed out that the proof of Lemma
 \ref{lem.4.4}
  follows from  Zheng's proof for finite ordered graphs. However, for the convenience of the reader, we also present  the proof here.

\begin{lemma}\label{lem.4.4}
 The axiom {\bf (A4)} holds for $(\mathcal{F}_{\infty}, \leq, r)$, i.e. for $a\in  \mathcal{F}_{< \infty}$ and $S\in \mathcal{F}_{\infty}$, if
\em{depth}$_S(a)< \infty$ and $\mathcal{O}\subseteq \mathcal{F}_{|a|+1}$, then
there is $U\in [$\mbox{depth}$_S(a), S]$ such that  $r_{|a|+1}[a, U]\subseteq \mathcal{O}$ or $r_{|a|+1}[a, U]\cap \mathcal{O}=\emptyset$.
\end{lemma}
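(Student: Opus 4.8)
The plan is to prove the one-step pigeonhole \textbf{(A4)} by a single application of the Halpern-L\"{a}uchli theorem (Theorem \ref{thm.HL}), following the template Zheng used for $\mathcal{G}_\infty(\tau)$. Write $n=|a|$ and $d=\mbox{depth}_S(a)$, so that $a\subseteq r_d(S)$ and we seek $U\in[r_d(S),S]$, that is, $U\leq S$ with $r_d(U)=r_d(S)$, on which the color of the next approximation is constant. First I would analyze what a one-step extension records: for $Y\in[a,U]$ the approximation $r_{n+1}(Y)=Y\cap\omega^{\leq|s_n^Y|+1}$ is obtained from $a$ by extending each maximal node (leaf) $\ell_0,\dots,\ell_{q-1}$ of $a$, all lying at the common level $|s_{n-1}|+1$, along a single branch of $S$ up to the level of the new splitting node $s_n^Y$, declaring one of these extended leaves to be $s_n^Y$, and adjoining its immediate successors; by the coding requirements (2)(i)--(iii) in the definition of $\mathcal{F}_\infty$ the isomorphism type coded at $s_n^Y$ is determined, so the only freedom lies in the branch extensions and the location of $s_n^Y$. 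Thus the $2$-coloring ``$r_{n+1}(Y)\in\mathcal{O}$ or not'' becomes, after this reduction, a finite coloring of one-step configurations inside $S$.

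For the Halpern-L\"{a}uchli step I would attach to each leaf $\ell_j$ of $a$ the finitely branching subtree $S^{(j)}=\{t\in S:\ell_j\subseteq t\}$ of $T_{\max}$, which has no terminal nodes because $S\in\mathcal{F}_\infty$ is an $\pmb{F_{\max}}$-tree. A one-step configuration corresponds to a level $m$ together with a choice of one node at level $m$ above each $\ell_j$, with a designated node serving as the splitting node and its successors read off inside $S$; it is therefore indexed by the level product $\prod_{j<q}S^{(j)}\cap\omega^m$. Coloring each such product by the induced value of the above $2$-coloring and applying Theorem \ref{thm.HL} with $q$ trees and $2$ colors yields an infinite set $M\subseteq\omega$ and strong subtrees $U^{(j)}\subseteq S^{(j)}$ witnessed by $M$ that are monochromatic along $\bigcup_{m\in M}\prod_{j<q}U^{(j)}\cap\omega^m$. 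Hence every one-step extension whose splitting node and successors are drawn from the $U^{(j)}$ receives one fixed color $\varepsilon\in 2$.

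Finally I would assemble $U$. Starting from the stem $r_d(S)$, so that $r_d(U)=r_d(S)$ is preserved, I would use the strong subtrees $U^{(j)}$ to place the first new splitting node $s_n$ coding the required structure, and then extend $U$ below $s_n$ to any legitimate member of $\mathcal{F}_\infty$ contained in $S$, which exists by the fusion remark following the definition of a fusion sequence. Every $Y\in[a,U]$ then has $r_{n+1}(Y)$ of color $\varepsilon$, so $r_{n+1}[a,U]\subseteq\mathcal{O}$ when $\varepsilon$ is the ``in $\mathcal{O}$'' color and $r_{n+1}[a,U]\cap\mathcal{O}=\emptyset$ otherwise. \textbf{The main obstacle} is exactly this assembly: the strong subtrees produced by Halpern-L\"{a}uchli need be neither skew nor correctly coded, so one must interleave the choice of splitting nodes realizing the exact isomorphism types (available from the universality built into the $\pmb{F_{\max}}$-tree property, Definition \ref{defn.2.6}) with the requirement of remaining inside the monochromatic level products, while spacing the splitting nodes at distinct levels to force skewness and arranging them to meet the uniqueness condition (2)(i) and the lexicographic condition (2)(iii). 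Verifying that these rigid structural constraints can be met simultaneously with monochromaticity---rather than the Halpern-L\"{a}uchli application itself---is where the real work lies.
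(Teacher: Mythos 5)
There is a genuine gap, and it is not where you located it. Your reduction asserts that a one-step extension of $a$ inside $S$ is determined by ``the branch extensions and the location of $s_n^Y$,'' so that the coloring descends to a coloring of the level products $\prod_{j<q}S^{(j)}\cap\omega^m$. This is false: for $Y\in[a,S]$, the immediate successors of the new splitting node $t=s_n^Y$ in $Y$ form a copy of $\pmb{F_j}$ that is only a \emph{substructure} of $\pmb{F^S_t}$, and there are in general many copies of $\pmb{F_j}$ inside $\pmb{F^S_t}$ giving distinct elements of $\mathcal{F}_{|a|+1}$ with the same splitting node and the same nephew tuple but possibly different colors. Hence the coloring you feed to Halpern--L\"{a}uchli is not well defined on tuples of single nodes, and the whole second step collapses. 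This is exactly the problem the paper's proof solves \emph{before} invoking Halpern--L\"{a}uchli: in its Step 1 it builds an intermediate tree $X\subseteq S$, using the Ramsey property of $\mathcal{K}$ (choosing $\pmb{H}\in\mathcal{K}$ with $\pmb{H}\longrightarrow(\pmb{A_{j+l}})^{\pmb{F_j}}_{2^k}$ at each node) to guarantee condition (c): for every splitting node $t$ of $X$ and every tuple of nephews $(t_i)$, the set $\bigl\{b_{\pmb{F},(t_i)}:\pmb{F}\in\bigl(\mathop{}_{\pmb{F_j}}^{\pmb{F^X_t}}\bigr)\bigr\}$ lies entirely inside or entirely outside $\mathcal{O}$. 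Only after this homogenization over copies of $\pmb{F_j}$ does the color become a function of the level-product tuple, so that Halpern--L\"{a}uchli applies. Tellingly, your proposed proof never uses the Ramsey property of $\mathcal{K}$ at all, even though it is a standing hypothesis and is indispensable here.

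Your identified ``main obstacle'' (that the strong subtrees from Halpern--L\"{a}uchli are neither skew nor correctly coded) is a real but secondary issue, and the paper disposes of it through the same Step 1: the tree $X$ is built skew with prescribed splitting levels $(m_l)$ and with splitting structures $\pmb{F^X_t}\cong\pmb{A_{j+l}}$ rich enough to embed $\pmb{F_0},\dots,\pmb{F_{j+l}}$, so that after the Halpern--L\"{a}uchli thinning the claim in Step 2 shows every relevant splitting node survives with $\pmb{F^T_t}=\pmb{F^X_t}$, whence $T$ is still an $\pmb{F_{\max}}$-tree and can be shrunk to a member of $\mathcal{F}_\infty$ below $[\mbox{depth}_S(a),S]$. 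So the correct architecture is: (finite structural Ramsey theorem for $\mathcal{K}$ to homogenize within splitting structures) followed by (Halpern--L\"{a}uchli across the leaves of $a$); your proposal contains only the second half.
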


\begin{proof}Let $m_0=\mbox{height}(a)$. Then $a$ has $|a|$ many splitting nodes, and each element in $\mathcal{F}_{|a|+1}$ has $|a|+1$ many splitting nodes.
 In particular, we
can find $u\in a\cap \omega^{m_0}$ and $j<\omega$ such that for every $b\in  \mathcal{F}_{|a|+1}$ with $a\sqsubseteq b$,  there is a unique splitting node
 $t\in b_u$, and $\pmb{F^b_t}
\cong \pmb{F_j}$.
Let $a\cap \omega^{m_0}$ be enumerated as $u= v_0, v_1,\dots, v_d$.

Step 1.
Let us construct a subtree $X\subseteq S$  along
with a strictly increasing
sequence $(m_l)_{l<\omega}$ starting with $m_0=$ height$(a)$ such that the following conditions
hold for every $l< \omega$ and every $s\in X\cap \omega^{m_l}$:
\begin{enumerate}
\item[(a)] $X_s$ has a unique splitting node $t$ of length in $[m_l, m_{l+1})$.
\item[(b)]
 For the $t$ from condition (a), $\pmb{F^X_t}\cong \pmb{A_{j+l}}$.
\end{enumerate}

Let $t\in  S_u$. The set of tuples of {\em nephews} of $t$ is defined to be
$$\{(t_1,\dots, t_d) : t_i\in X_{v_i}\cap \omega^{|t|+1} \
\mbox{for} \ 1\leq i\leq d\}.$$
Suppose $(t_i)_{1\leq i\leq d}$ is a tuple of nephews of $t$. Each finite structure $\pmb{F}\in
\big(\mathop{}_{\pmb{F_j}}^{\pmb{F^X_t}}\big)$
together with
$(t_i)_{1\leq i\leq d}$ determines an element $b_{\pmb{F},(t_i)}\in  \mathcal{F}_{|a|+1}$, where the set of $\subseteq$-maximal nodes
in $b_{\pmb{F},(t_i)}$ is $F\cup \{t_i : 1\leq i\leq d\}$. With this notion of $b_{\pmb{F}, (t_i)}$  defined, we can state
another requirement for $X$.

(c) For every $t\in X$ and every tuple $(t_i)_{1\leq i\leq d}$ of nephews of $t$, the set
$\Big\{b_{\pmb{F},(t_i)} : \pmb{F}\in  \big(\mathop{}_{\pmb{F_j}}^{\pmb{F^X_t}}\big)\Big\}$ is either included or disjoint from $\mathcal{O}$.

We recursively construct sets $X(m_l)\subseteq \omega^{m_l}$.
Then $X$ will be the downward closure of
$\bigcup_{l<\omega} X(m_l)$ and $X\cap \omega^{m_l}=X(m_l)$. Start with $X(m_0)=\{v_i : i\leq d\}$.
Assume we have constructed $X(m_l)$. The number of extensions in $X(m_{l+1})$ for
each $s\in X(m_l)$ is prescribed.
In particular, for each $t\supseteq u$ in $X(m_{l+1})$, the set of (tuples of)
nephews of $t$ will be finite and of the same size, independent of $t$.
Let this size be  $k< \omega$.
 Since $\mathcal{K}$ is a Fra\"{\i}ss\'{e} class of finite ordered binary relational structures with the Ramsey property, there is a finite
ordered structure $\pmb{H}\in \mathcal{K}$ such that
$\pmb{H}\longrightarrow (\pmb{A_{j+l}})^{\pmb{F_{j}}}_{2^{k}}$
Since $S$ is an $\pmb{F_{\max}}$-tree, for each $s\in X(m_l)$ extending $v_i \ (1\leq i\leq d)$, $s$ has an
extension $t(s)\in S$ such that there is
$\pmb{F(s)}\in \big(\mathop{}_{\pmb{A_{j+l}}}^{\pmb{F^S_{t(s)}}}\big)$.
On the other hand, for each
$s\in X(m_l)$ extending $u$, $s$ has an extension $t(s)\in S$ such that $\pmb{H}$ embeds into $\pmb{F^S
_{t(s)}}$.
For each tuple $(t_i)_{1\leq i\leq d}$ of nephews of $t(s)$, there is a natural coloring $c : \big(\mathop{}_{ \ \pmb{F_{j}}}^{\pmb{F^S_{t(s)}}}\big)\longrightarrow 2$
depending on whether $b_{\pmb{F}, (t_i)}$ is in $\mathcal{O}$.
Thus, there are at most $k$ many
2-colorings.
These colorings can be encoded in a single $2^n$-coloring of  $\big(\mathop{}_{ \ \pmb{F_{j}}}^{\pmb{F^S_{t(s)}}}\big)$.
Then there is $\pmb{H(s)}\in  \big(\mathop{}_{\pmb{A_{j+l}}}^{\pmb{F^S_{t(s)}}}\big)$ such that the set $\big(\mathop{}_{ \ \pmb{F_{j}}}^{\pmb{H(s)}}\big)$
is monochromatic.
 Let
$$
m_{l+1}=\max\{|t(s)| : s\in X(m_l)\}+1.
$$
Suppose that  $X(m_{l+1})\subseteq S\cap \omega^{m_{l+1}}$ has the property  that every node in $(\bigcup_{s\in D} F(s))$
$\cup (\bigcup_{s\in X(m_l)\backslash D} H(s))$ has a unique
extension in $X(m_{l+1})$, where $D=\{s\in X(m_l) : s$ extends $v_i,\ 1\leq i\leq d\}$. Thus $X$ satisfies (a), (b) and (c).
This finishes the construction of $X$ and $(m_l)_{l<\omega}$.

Step 2.
We use the Halpern-L\"{a}uchli theorem to shrink $X$ to an $\pmb{F_{\max}}$-tree $T$ such that
the set $\{b\in  \mathcal{F}_{|a|+1} : b\subseteq T \}$ is either included in  or disjoint from $\mathcal{O}$.
We define a coloring $$c :
\bigcup_{l< \omega}
\prod_{i\leq d}X_{v_i}\cap \omega^{m_l}\longrightarrow 2
$$
as follows:
Let $c(v_0, v_1,\dots, v_d)=1$.
Suppose $0<l<\omega$ and $(t_i)_{i\leq d}\in X_{v_i}\cap \omega^{m_l}$. Since $S$ is a skew tree, so is $X$.
Let
$$
sl(t_0)=\max\{|t| : t\sqsubseteq t_0 \ \mbox{and   succ}_X(t)>1\}+1.
$$
Then $(t_i\upharpoonright sl(t_0))_{1\leq i\leq d}$ is a tuple of
nephews for $t_0\upharpoonright sl(t_0)-1$. Let
$$\begin{array}{ll}
c(t_0, t_1,\dots, t_d)=\left\{\begin{array}{ll}
1,&\Big\{b_{\pmb{F}, (t_i\upharpoonright sl(t_0))} : \pmb{F}\in \big(\mathop{}_{ \ \ \ \ \ \pmb{F_{j}}}^{\pmb{F^X_{t_{0}\upharpoonright sl(t_0)-1}}}\big)
\Big\}\subseteq \mathcal{O},\\
0, &\Big\{b_{\pmb{F}, (t_i\upharpoonright sl(t_0))} : \pmb{F}\in \big(\mathop{}_{ \ \ \ \ \ \pmb{F_{j}}}^{\pmb{F^X_{t_{0}\upharpoonright sl(t_0)-1}}}\big)\Big\}
\cap  \mathcal{O}=\emptyset.\end{array}\right.
\end{array}$$ By (c), this colouring is well-defined. By Theorem 4.3,
there are a strictly increasing sequence $(n_j)_{j<\omega}\subseteq (m_l)_{l<\omega}$ and  strong
subtrees $$Y_i\subseteq \bigcup_{l< \omega}
X_{v_i}\cap \omega^{m_l}$$ witnessed by $(n_j)_{j<\omega}$ such that  $c$ is monochromatic on
$\bigcup_{j< \omega}
\prod_{i\leq d}Y_{i}\cap \omega^{n_j}$.
Let $T$ be the downward closure of $\bigcup_{i\leq d}Y_i$.

\begin{claim}
 If $l< \omega$, $i\leq d$, $s\in Y_i\cap \omega^{m_l}$ and $t$ corresponds to $s$ as in (a) and (b), then $t\in  T$
and $\pmb{F_t^{T}}=\pmb{F_t^X}$.
\end{claim}

\begin{proof}
Since $s\in Y_i\cap \omega^{m_l}$, there is  some $p< \omega$ such that $n_p= m_l$. As $Y_i$ is a strong
subtree of $\bigcup_{l^\prime< \omega}X_{v_i}\cap \omega^{m_{l^\prime}}$, every immediate successor of $s$ in $\bigcup_{l^\prime< \omega}X_{v_i}\cap \omega^{m_{l^\prime}}$
 has exactly one extension in $Y_i\cap \omega^{n_{p+1}}$. It follows from the construction of $X$  that every node in succ$_X(t)$ has exactly one
extension in $X\cap \omega^{m_{l+1}}$.  Moreover, the immediate successors of $s$ in  $\bigcup_{l^\prime< \omega}X_{v_i}\cap \omega^{m_{l^\prime}}$
are precisely the
extensions in $X\cap \omega^{m_{l+1}}$ of nodes in succ$_X(t)$. Then $t$ is in  the downward closure of $Y_i$, and  thus $t\in T$. So $\pmb{F_t^{T}}=\pmb{F_t^X}$.
\end{proof}

Then it is straightforward to check that $T$ is an $\pmb{F_{\max}}$-tree such that
the set $\{b\in \mathcal{F}_{|a|+1} : b\subseteq  T\}$ is either included in or disjoint from $\mathcal{O}$. By the
observation before this lemma, we can further shrink $T$ to $U\in  \mathcal{F}_{\infty}$ satisfying the
conclusion of the lemma.\end{proof}

\begin{theorem}\label{thm.FclasstRs}
For each   Fra\"{\i}ss\'{e}
  class of finite ordered binary relational structures with the Ramsey property and each type $\tau$,
the space $(\mathcal{F}_\infty(\tau), \leq, r)$
 is a topological Ramsey space.
\end{theorem}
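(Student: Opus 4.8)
The plan is to verify that the triple $(\mathcal{F}_\infty(\tau),\leq,r)$ satisfies the four axioms \textbf{(A1)--(A4)} from pages 93--94 of \cite{ST10}, together with the requirement that $\mathcal{F}_\infty$ is closed as a subspace of $(\mathcal{F}_{<\infty})^\omega$; the abstract Ellentuck theorem for topological Ramsey spaces then yields the conclusion. The closedness and the axioms \textbf{(A1)--(A3)} have already been disposed of in the discussion immediately preceding Lemma \ref{lem.4.4}: identifying $S$ with its sequence of approximations $(r_n(S))_{n<\omega}$ and giving $\mathcal{F}_{<\infty}$ the discrete topology makes $\mathcal{F}_\infty$ a closed subset of the product, and the sequencing/finitization axioms \textbf{(A1)} (numbering), \textbf{(A2)} (finitization), and \textbf{(A3)} (amalgamation of approximations) follow directly from the definitions of $r_l$, $\leq_{\mathrm{fin}}$, and $\sqsubseteq$. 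I would state explicitly that these verifications are routine and cite the corresponding checks, so the entire burden of the theorem reduces to the Pigeonhole-type axiom \textbf{(A4)}.

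The heart of the argument is therefore Lemma \ref{lem.4.4}, which is precisely axiom \textbf{(A4)}: given $a\in\mathcal{F}_{<\infty}$ with $\mathrm{depth}_S(a)<\infty$ and a partition $\mathcal{O}\subseteq\mathcal{F}_{|a|+1}$, one finds $U\in[\mathrm{depth}_S(a),S]$ on which the one-step extensions $r_{|a|+1}[a,U]$ are either entirely inside or entirely outside $\mathcal{O}$. Since Lemma \ref{lem.4.4} is already proved in the excerpt, the proof of the theorem consists of collecting the pieces: I would write that by the preceding remarks \textbf{(A1)--(A3)} and closedness hold, by Lemma \ref{lem.4.4} axiom \textbf{(A4)} holds, and hence by the abstract characterization of topological Ramsey spaces (e.g. the Abstract Ellentuck Theorem, Theorem 5.4 in \cite{ST10}) the space $(\mathcal{F}_\infty(\tau),\leq,r)$ is a topological Ramsey space. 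The result holds uniformly for every \Fraisse\ class $\mathcal{K}$ of finite ordered binary relational structures with the Ramsey property and every type $\tau$, because both the construction of $\mathcal{F}_\infty(\tau)$ and the proof of \textbf{(A4)} used only the Ramsey property of $\mathcal{K}$ (to obtain the structure $\pmb{H}$ with $\pmb{H}\longrightarrow(\pmb{A_{j+l}})^{\pmb{F_j}}_{2^k}$) and the $\pmb{F_{\max}}$-tree machinery, which are available for all such $\mathcal{K}$.

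The genuinely nontrivial content is entirely contained in Lemma \ref{lem.4.4}, so there is no remaining obstacle at the level of the theorem itself; the theorem is a clean corollary. Were the lemma not yet established, the main obstacle would be exactly its Step 2: engineering the two-step reduction in which one first builds the auxiliary skew tree $X$ whose splitting levels are padded to contain large homogeneous structures $\pmb{A_{j+l}}$ (so that the per-node Ramsey coloring can be resolved via $\pmb{H}\longrightarrow(\pmb{A_{j+l}})^{\pmb{F_j}}_{2^k}$), and then applies the Halpern--L\"auchli theorem (Theorem \ref{thm.HL}) along the $d+1$ coordinate trees $X_{v_i}$ to obtain a single monochromatic strong subtree whose downward closure $T$ decides membership of every $b\in\mathcal{F}_{|a|+1}$ in $\mathcal{O}$. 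The final polishing step -- shrinking the resulting $\pmb{F_{\max}}$-tree $T$ down to an honest member $U\in\mathcal{F}_\infty$ -- uses the observation, recorded before the lemma, that every $\pmb{F_{\max}}$-tree contains some $S\in\mathcal{F}_\infty$ as a subtree. Collecting these, the proof of Theorem \ref{thm.FclasstRs} is complete.
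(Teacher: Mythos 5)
Your proposal is correct and follows essentially the same route as the paper: the paper reduces Theorem \ref{thm.FclasstRs} to checking closedness of $\mathcal{F}_\infty$ in $(\mathcal{F}_{<\infty})^\omega$ together with axioms \textbf{(A1)--(A3)} (noted as straightforward in the discussion preceding Lemma \ref{lem.4.4}) and axiom \textbf{(A4)} (which is exactly Lemma \ref{lem.4.4}, proved via the Ramsey property of $\mathcal{K}$ and the Halpern--L\"auchli theorem), and then invokes the Abstract Ellentuck Theorem of \cite{ST10}. You identified this decomposition precisely, including the role of the observation that every $\pmb{F_{\max}}$-tree contains a member of $\mathcal{F}_\infty$, so nothing further is needed.
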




\vspace*{0.2cm}

\section{Finite big Ramsey degrees
for ordered binary relational universal inverse limit structures}

\vspace*{0.4cm}

Let $\mathcal{K}$ be a Fra\"{\i}ss\'{e} class of finite ordered binary relational structures
satisfying the Ramsey property, with  signature $L=\{<,R_0,\dots, R_{k-1}\}$ where  each $R_i$, $i<k$, is a binary relation.
 In this section,
 we prove that  for each such $\mathcal{K}$, the universal inverse limit structure has finite big Ramsey degrees under finite Baire-measurable colorings.
 The proofs in this section are straightforward via
the
  topological Ramsey spaces
 from Theorem \ref{thm.FclasstRs}
 (which is based on work of Zheng in \cite{YY18})
 and the work of Huber-Geschke-Kojman on
inverse limits of finite ordered graphs  in \cite{SH19}.

\begin{definition}
Let $\mathcal{K}$ be a Fra\"{\i}ss\'{e} class of finite ordered binary relational structures with the Ramsey property.  A {\em universal inverse limit} of finite ordered structures in $\mathcal{K}$
 is a triple $\pmb{G}=\langle G, <^{\pmb{G}}, R^{\pmb{G}}_0,\dots, R^{\pmb{G}}_{k-1}\rangle$, such
that the following conditions hold.

1. $G$ is a compact subset of $\mathbb{R}\backslash\mathbb{Q}$ without isolated points,
$ <^{\pmb{G}}$ is the
restriction of the standard order on $\mathbb{R}$ to $G$, and
 $ R^{\pmb{G}}_i\subseteq [G]
^2$ for each $i<k$.

2. For every pair of distinct elements $u, v\in  G$, there is a partition of $G$ to finitely
many closed intervals such that

(a) $u, v$ belong to different intervals from the partition;

(b) for every interval $I$ in the partition, for all $x\in G\backslash I$ and for all $y, z\in I$,
$(x, y)\in R^{\pmb{G}}_i$ if and only if $(x, z)\in R^{\pmb{G}}_i$, for each $i<k$.

3. Every nonempty open interval of $G$ contains induced copies of all finite ordered
structures in $\mathcal{K}$.
\end{definition}

For
every $\pmb{F_{\max}}$-tree $T$, it can be seen  from Definition
\ref{defn.2.6}
that $\pmb{F(T)}$ is a universal inverse limit
structure.
So it follows from the universality that  we can consider colourings of finite induced substructures of $\pmb{F(T)}$.

\begin{definition}
Let $\tau$ be a type and $\pmb{H}\in \tau$. The $\mathcal{F}_\infty$-envelope of $\pmb{H}$ is
$$\mathcal{C}_{\pmb{H}} =\{U\in \mathcal{F}_\infty : (\exists \ l)(r_l(U)=\dn \pmb{H}^\vee)\},$$
where $\dn H^\vee=\{a\in \omega^{<\omega} : (\exists \ x\in H)(a\subseteq x\upharpoonright (\triangle(\pmb{H})+1))\}$.
\end{definition}

\begin{lemma}\label{lem.5.3}
Let $\tau$ be a type and  $T\in \mathcal{F}_\infty$. Define a map
$c_1 : [\emptyset, T]\longrightarrow \big\{\pmb{H}^\vee : \pmb{H}\in \big(\mathop{}_{ \ \ \tau}^{\pmb{F(T)}}\big)\big\}$ as follows:
$$\forall \ U\in [\emptyset, T],\ \mbox{if} \ U\in \mathcal{C}_{\pmb{H}},  \ c_1(U)=\pmb{H}^\vee.$$
Then $c_1$ is well-defined and continuous, where we equip the range with the  discrete topology.
\end{lemma}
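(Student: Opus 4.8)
The plan is to read off the value of $c_1(U)$ from a single, canonically determined finite approximation of $U$, after which both well-definedness and continuity follow quickly. The guiding observation is that the relevant approximation index is always $m$, where $m+1$ is the number of elements of a structure in $\tau$: I claim that whenever $U\in\mathcal{C}_{\pmb{H}}$ is witnessed by some $l$, necessarily $l=m$, so that $c_1(U)$ is determined by $r_m(U)$ alone. Since the range carries the discrete topology, continuity then amounts to $c_1$ being locally constant, and this dependence on $r_m(U)$ is exactly what is needed.

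First I would show a value exists. If $\{s_i\}_{i<\omega}$ enumerates the splitting nodes of $U$, then clause (1) in the definition of $\mathcal{F}_\infty$ gives $\pmb{F}\in\tau$ with $\pmb{F}^\vee=U\cap\omega^{|s_{m-1}|+1}$. As $U$ is a downward closed subtree of $T_{\max}$ with no terminal nodes, every node of $U$ of length at most $|s_{m-1}|+1$ is an initial segment of a node in $\pmb{F}^\vee$, so $r_m(U)=U\cap\omega^{\le |s_{m-1}|+1}=\dn\pmb{F}^\vee$. Extending the $m+1$ nodes of $\pmb{F}^\vee$ to branches in $[U]=F(U)\subseteq F(T)$ yields a copy $\pmb{H}\in\big(\mathop{}_{ \ \ \tau}^{\pmb{F(T)}}\big)$; since two such branches meet precisely where the corresponding nodes of $\pmb{F}^\vee$ meet, the structure and meet-order pattern of $\pmb{H}$ agree with those of $\pmb{F}$, giving $\triangle(\pmb{H})=|s_{m-1}|$ and $\pmb{H}^\vee=\pmb{F}^\vee$. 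Thus $\pmb{H}$ has type $\tau$, $r_m(U)=\dn\pmb{H}^\vee$, so $U\in\mathcal{C}_{\pmb{H}}$ and $c_1(U)=\pmb{H}^\vee$ is defined and lies in the stated range.

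The main point, and the step I expect to be the genuine obstacle, is the uniqueness of this value: I must exclude $U\in\mathcal{C}_{\pmb{H}}\cap\mathcal{C}_{\pmb{H'}}$ with $\pmb{H}^\vee\neq\pmb{H'}^\vee$. Suppose $r_l(U)=\dn\pmb{G}^\vee$ for some copy $\pmb{G}$ of type $\tau$; then its top level $U\cap\omega^{|s_{l-1}|+1}=\pmb{G}^\vee$ has exactly $m+1$ nodes, since $|\pmb{G}^\vee|=|G|=m+1$. I would count the nodes of $U$ at level $|s_{l-1}|+1$ as $1+\sum_{i<l}(|\mathrm{succ}_U(s_i)|-1)$. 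Applying this count at $l=m$ together with clause (1) forces each of the first $m$ splitting nodes to be binary, so the count equals $l+1$ for every $l\le m$ and strictly exceeds $m+1$ for every $l>m$. Hence the top level has exactly $m+1$ nodes only when $l=m$; thus $l=m$ is forced, $r_l(U)=r_m(U)$, and $\dn\pmb{H}^\vee=\dn\pmb{H'}^\vee$ forces $\pmb{H}^\vee=\pmb{H'}^\vee$. Therefore $c_1(U)$ is single-valued.

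Finally, continuity is immediate once the value is pinned to $r_m(U)$. Because the range is discrete it suffices to exhibit, for each $U\in[\emptyset,T]$, a basic open neighborhood on which $c_1$ is constant. I would take $[r_m(U),U]$, which is contained in $[\emptyset,T]$ because $U\le T$. For any $X\in[r_m(U),U]$ we have $r_m(X)=r_m(U)=\dn\pmb{H}^\vee$, so $X\in\mathcal{C}_{\pmb{H}}$ and $c_1(X)=\pmb{H}^\vee=c_1(U)$. Hence $c_1$ is locally constant, and therefore continuous, which completes the proof.
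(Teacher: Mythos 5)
Your proof is correct and follows essentially the same route as the paper's: both arguments pin the witnessing index down to $l=m$ by a counting argument (the paper counts the $m$ splitting nodes of $\dn\pmb{H}^\vee$ against the $l$ splitting nodes of $r_l(U)$, while you count nodes on the top level, using clause (1) to force binary splitting), and both obtain continuity from the same basic open neighborhood $[m,U]=[r_m(U),U]$ on which $c_1$ is constant. The only real difference is that you also verify that $c_1$ is total, i.e.\ that every $U\in[\emptyset,T]$ lies in $\mathcal{C}_{\pmb{H}}$ for some $\pmb{H}$ of type $\tau$ that is actually a substructure of $\pmb{F(T)}$ (by extending the nodes of the $\pmb{F}^\vee$ given by clause (1) to branches of $U$), a step the paper asserts without proof; this is a welcome addition rather than a deviation.
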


\begin{proof}
Let $m+1$ be the number of elements for each $\pmb{H}\in \tau$. Then $\dn \pmb{H}^\vee$ has $m$ splitting nodes. Thus
$$\forall \ U\in \mathcal{F}_\infty,  \forall \ l,
(r_l(U)=\dn \pmb{H}^\vee\Rightarrow l=m).$$ Let $U, V\in  [\emptyset, T]$. Then there are $\pmb{H}, \pmb{K}\in \tau$ such that $U\in \mathcal{C}_{\pmb{H}}$ and
$V\in \mathcal{C}_{\pmb{K}}$.  If $U=V$, then $\dn \pmb{H}^\vee=r_m(U)=r_m(V)=\dn \pmb{K}^\vee$, and thus $\pmb{H}^\vee=\pmb{K}^\vee$. So $c_1$ is well-defined.

Suppose that $\pmb{H}\in \tau$  and $U\in (c_1)^{-1}(\pmb{H}^\vee)$. We have that $U\in \mathcal{C}_{\pmb{H}}$. Then  the set $[m, U]$ is an open set containing $U$ and $[m, U]\subseteq (c_1)^{-1}(\pmb{H}^\vee)$. Thus $c_1$ is continuous.\end{proof}

We equip $\omega^{\omega}$ with the first-difference metric topology, which has basic open sets of
the form $[s]=\{x\in \omega^{\omega} : s\subseteq x\}$ for $s\in  \omega^{<\omega}$. For $n\in  \omega$, let $[\pmb{F_{\max}}]^n$
denote the set of all induced substructures of $\pmb{F_{\max}}$ of size $n$.

\begin{definition}   For $n\geq 1$,  we define a topology on $[\pmb{F_{\max}}]^n$
as follows: A
set $\mathcal{U}\subseteq [\pmb{F_{\max}}]^n$
is open if for all $\pmb{H}\in  \mathcal{U}$,  there are open neighborhoods
$U_1, \dots, U_n$ of the elements of $H$ such that all $\pmb{H^\prime}\in [\pmb{F_{\max}}]^n$
that have exactly
one vertex in each $U_i$ are also in $\mathcal{U}$. This topology is separable and induced
by a complete metric. A coloring of $n$-tuples from $[\pmb{F_{\max}}]^n$ is continuous if it
is continuous with respect to this topology.
\end{definition}

\begin{lemma}
 Let $\tau$ be a type and $T\in \mathcal{F}_\infty$.  For every continuous coloring
$c : \big(\mathop{}_{ \ \ \tau}^{\pmb{F(T)}}\big)
\longrightarrow 2$,  there exists an $\pmb{F_{\max}}$-subtree $S$ of $T$ such that $c$ depends only on $\pmb{H}^\vee$, i.e., for $\pmb{H}, \pmb{K}\in
\big(\mathop{}_{ \ \ \tau}^{\pmb{F(S)}}\big)$, if $\pmb{H}^\vee=\pmb{K}^\vee$, then $c(\pmb{H})=c(\pmb{K})$.
\end{lemma}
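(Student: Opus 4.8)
The plan is to lift the given coloring $c$ of copies to a Baire-measurable coloring of members of the topological Ramsey space $\mathcal{F}_\infty=\mathcal{F}_\infty(\tau)$, to homogenize it one stem at a time via the Ramsey property supplied by Theorem \ref{thm.FclasstRs}, and then to fuse over all possible stems.

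First I would attach to each $U\in[\emptyset,T]$ a canonical copy $\pmb{H}_U$ of type $\tau$. By Lemma \ref{lem.5.3} the value $c_1(U)=\pmb{H}^\vee$ is the set of $m+1$ nodes forming the top level of $r_m(U)=\dn\pmb{H}^\vee$; extending each of these nodes along the leftmost branch of $U$ yields $m+1$ branches whose pairwise meets all occur at or below level $\triangle(\pmb{H})$ and are therefore determined entirely by $\pmb{H}^\vee$. Hence these branches span an induced substructure $\pmb{H}_U\in\big(\mathop{}_{\ \ \tau}^{\pmb{F(U)}}\big)$ with $(\pmb{H}_U)^\vee=\pmb{H}^\vee$, and I set $d(U)=c(\pmb{H}_U)$. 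Since finite initial segments of the leftmost branches depend continuously on $U$ and both $c$ and $c_1$ are continuous, the coloring $d\colon[\emptyset,T]\to 2$ is continuous, hence has the property of Baire.

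Next, fixing a stem, i.e.\ an approximation $a\in\mathcal{F}_m$ with $[a,T]\neq\emptyset$, the set $\mathcal{X}=\{U:d(U)=0\}$ is Ramsey by Theorem \ref{thm.FclasstRs}, so there is a member of $[a,T]$ on which $d$ is constant. Because there are only countably many such $a$, I would enumerate them as $(a_n)_{n<\omega}$ and build a fusion sequence $(T_n)_{n<\omega}$ with $T_{-1}=T$ by choosing, at stage $n$, some $T_n\in[a_n,T_{n-1}]$ on which $d$ is constant with some value $\epsilon_{a_n}$ (and $T_n=T_{n-1}$ if $[a_n,T_{n-1}]=\emptyset$), interleaving the usual steps that keep $(T_n)$ a genuine fusion sequence. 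By the remark following the definition of fusion sequences, the fusion $S=\bigcap_{n}T_n$ is an $\pmb{F_{\max}}$-tree, and $d$ is constant on each $[a,S]$. Crucially, the value $\epsilon_a$ may vary with $a$, which is exactly why the conclusion reads ``depends only on $\pmb{H}^\vee$'' rather than ``constant''.

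Finally I would check that this homogenization controls every copy, not merely the canonical ones. Given $\pmb{H}\in\big(\mathop{}_{\ \ \tau}^{\pmb{F(S)}}\big)$ with $\dn\pmb{H}^\vee=a$, I would produce $U\in[a,S]$ whose leftmost copy is exactly $\pmb{H}$: retain the stem $a$, prune $S$ so that each branch of $\pmb{H}$ becomes leftmost above its stem node, and use the branching of the $\pmb{F_{\max}}$-tree $S$ to fill in the remaining successors so that the conditions (1)--(2) defining $\mathcal{F}_\infty$ hold. Then $c(\pmb{H})=d(U)=\epsilon_a$ depends only on $a$, equivalently only on $\pmb{H}^\vee$, as required. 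I expect the main obstacle to be precisely this realizability step together with the fusion bookkeeping: one must verify that forcing a prescribed branch to be leftmost can be done simultaneously with the skewness and coding requirements inside $S$, and that homogenizing for $a_n$ never disturbs the homogeneity already secured for $a_0,\dots,a_{n-1}$. Both hinge on the abundance of branching in $T_{\max}$ and on the frozen-initial-segment structure of fusion sequences.
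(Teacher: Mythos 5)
Your step 3 (realizability) is where the argument breaks, and it is not a bookkeeping problem but the heart of the matter. Given the final tree $S$, a copy $\pmb{H}\in\big(\mathop{}_{\ \ \tau}^{\pmb{F(S)}}\big)$ need not be the canonical leftmost copy $\pmb{H}_U$ of \emph{any} $U\in\mathcal{F}_\infty$ with $U\subseteq S$. Concretely, build $\pmb{H}$ by extending the nodes of $\pmb{H}^\vee$ along branches of $S$ which, at every splitting node of $S$, pass through the lexicographically \emph{greatest} immediate successor. Any member $U$ of $\mathcal{F}_\infty$ has, along each of its branches, cofinally many splitting nodes $s$ with $|F^U_s|\ge 2$ (conditions (2)(i)--(iii) force every branch to pass through one splitting node of each ``generation''), and for a branch $x$ of $U$ to be leftmost it must satisfy that $x\upharpoonright(|s|+1)$ is the lexicographically least element of $F^U_s$ at each such $s$. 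But for the branches above, $x\upharpoonright(|s|+1)$ is lexicographically greatest in $F^S_s\supseteq F^U_s$, so it can never be the least element of a set with at least two elements. Hence no $U\le S$ in $\mathcal{F}_\infty$ has $\pmb{H}_U=\pmb{H}$, and your homogenization of $d$ says nothing about $c(\pmb{H})$. This is precisely the content of the lemma: it must control copies whose branches are ``wild'' above level $\triangle(\pmb{H})+1$ and therefore do not sit inside any member of the Ramsey space. In effect your argument re-proves the easier Theorem 5.6 (where Lemma 5.5 is already in hand, so that an arbitrary copy is reduced to its $\vee$ and one only needs \emph{some} $U$ with $r_m(U)=\dn\pmb{H}^\vee$, not a $U$ containing the actual branches of $\pmb{H}$) while assuming away the hard part.

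The paper's proof is entirely different and never invokes the Ramsey space: fix the nodes $t_1,\dots,t_l$ of a configuration $\pmb{H}^\vee$; by continuity of $c$ and compactness of $[T_{t_1}]\times\cdots\times[T_{t_l}]$ there is a finite level $m$ such that the color of every copy $\pmb{H}'$ with $(\pmb{H}')^\vee=\pmb{H}^\vee$ depends only on $\pmb{H}'\upharpoonright m$; since each level of $T$ carries only finitely many such configurations, this yields $f:\omega\to\omega$ so that copies with $\triangle(\pmb{H})+1=n$ have color determined by $\pmb{H}\upharpoonright f(n)$; finally one takes an $\pmb{F_{\max}}$-subtree $S$ of $T$ whose splitting nodes are spaced so that $S$ has no splitting node with length in $(n,f(n)]$ whenever it has one of length $n$. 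Inside such an $S$, the restriction $\pmb{H}\upharpoonright f(n)$ is uniquely determined by $\pmb{H}^\vee$, which gives the conclusion for \emph{all} copies, wild or not. (Separately, your fusion in step 2 is also problematic as stated: the $B$ returned by the Ramsey property for the stem $a_n$ is a thin member of $\mathcal{F}_\infty$ that does not contain the frozen levels $T_j(m_j)$ of the earlier trees, so one cannot take $T_n=B$ and remain a fusion sequence of $\pmb{F_{\max}}$-trees; some amalgamation over the frozen part would be needed. But this is secondary to the failure of step 3.)
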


\begin{proof}
For  $\pmb{H}\in \big(\mathop{}_{ \ \ \tau}^{\pmb{F(T)}}\big)$, by
definition of $\triangle(\pmb{H})$, the map $x\longmapsto x\upharpoonright (\triangle(\pmb{H})+1))$ is a bijection from
the universe  $H$ of  $\pmb{H}$ onto $\pmb{H}^\vee$.
Let $t_1,\dots, t_l$ denote
the elements of $\pmb{H}^\vee$. For all $\bar{x}=(x_1,\dots, x_l)\in [T_{t_1}]\times \cdot\cdot\cdot\times [T_{t_l}]$,
the induced substructure $\pmb{\overline{H}}$ of $\pmb{F(T)}$ on the set $\{x_1,\dots, x_l\}$ is isomorphic to $\pmb{H}$.
By the continuity of $c$, for all such $x$ there are open neighborhoods $x_1\in U^{\bar{x}}_{1},\dots,  x_l\in U^{\bar{x}}_{l}$
 such that for all $(y_1,\dots, y_l)\in  U^{\bar{x}}_{1}
\times\cdot\cdot\cdot\times U^{\bar{x}}_{l}$
for the induced
substructure $\pmb{H^\prime}$ of $\pmb{F(T)}$ on the vertices $y_1,\dots, y_l$, we have $c(\pmb{\overline{H}})=c(\pmb{H^\prime})$.

We may assume that the $U^{\bar{x}}_{i}$
are basic open sets, i.e., sets of the form $[T_r]$
for some $r\in T$. Since the space $[T_{t_1}]\times\cdot\cdot\cdot\times [T_{t_l}]$ is compact, there is a finite
set $A \subseteq
[T_{t_1}]\times\cdot\cdot\cdot\times [T_{t_l}]$ such that
$$[T_{t_1}]\times\cdot\cdot\cdot\times [T_{t_l}]=
\bigcup\limits_{\bar{x}\in A}\prod\limits_{i=1}^{l}U^{\bar{x}}_{i}.$$
Hence there is  $m\in \omega$ such that for all induced substructures $\pmb{H^\prime}$ of $\pmb{F(T)}$ with $\pmb{H^\prime}\upharpoonright (\triangle(\pmb{H})+1)=\pmb{H}^\vee$,  the color
$c(\pmb{H^\prime})$ only depends on $\pmb{H^\prime}\upharpoonright m$, where $m$ is  the maximal length of the $r$'s with
$[T_r]= U^{\bar{x}}_{i}$
for some $\bar{x}\in F$ and $i\in \{1,\dots, l\}$.

Since for each $m\in \omega$, there are only finitely many sets of the form $\pmb{H}\upharpoonright m$,
where $H\in \big(\mathop{}_{ \ \ \tau}^{\pmb{F(T)}}\big)$, there is a function $f : \omega\longrightarrow \omega$ such that for
every finite induced substructure $\pmb{H}$ of $\pmb{F(T)}$ with $\triangle(\pmb{H})+1=n$, the color $c(\pmb{H})$
only depends on $\pmb{H}\upharpoonright f(n)$.
Now let $S$ be an $\pmb{F_{\max}}$-subtree of $T$ such that
whenever $s\in S$ is a splitting node of $S$ of length $n$, then $S$ has no  splitting node  $t$ whose length is in the interval $(n, f(n)]$. Now for all $H\in \big(\mathop{}_{ \ \ \tau}^{\pmb{F(S)}}\big)$, the color $c(\pmb{H})$ only depends on $\pmb{H}^\vee$.\end{proof}

\begin{theorem}
 Let $T$ be an $\pmb{F_{\max}}$-tree. For every type $\tau$ of a finite induced substructure   of $\pmb{F_{\max}}$, and  every continuous coloring $c : \big(\mathop{}_{ \ \ \tau}^{\pmb{F(T)}}\big)
\longrightarrow 2$, there is an
$\pmb{F_{\max}}$-subtree $S$ of $T$ such that $c$ is monochromatic  on $\big(\mathop{}_{ \ \ \tau}^{\pmb{F(S)}}\big)$.
\end{theorem}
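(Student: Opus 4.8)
The plan is to combine the two previously established tools: the continuous-coloring reduction (the immediately preceding lemma) and the topological Ramsey space structure of $(\mathcal{F}_\infty(\tau),\leq,r)$ from Theorem \ref{thm.FclasstRs}. First I would apply the preceding lemma to the given continuous coloring $c:\big(\mathop{}_{\ \ \tau}^{\pmb{F(T)}}\big)\longrightarrow 2$ to obtain an $\pmb{F_{\max}}$-subtree $T'\subseteq T$ on which $c$ depends only on $\pmb{H}^\vee$; that is, $c$ factors through the map $\pmb{H}\longmapsto \pmb{H}^\vee$. By the observation before Lemma \ref{lem.4.4}, every $\pmb{F_{\max}}$-tree contains a member of $\mathcal{F}_\infty$ as a subtree, so I may pass to some $T''\in\mathcal{F}_\infty$ with $T''\subseteq T'$ and work inside the topological Ramsey space.

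The key step is to transfer the factored coloring on copies of $\pmb{H}$ into a coloring of the Ramsey space itself, using the $\mathcal{F}_\infty$-envelope machinery. Concretely, I would define a coloring $\tilde{c}$ on $[\emptyset,T'']$ by setting $\tilde{c}(U)=c(\pmb{H})$ whenever $U\in\mathcal{C}_{\pmb{H}}$, where $\pmb{H}^\vee$ determines the color by the first step. By Lemma \ref{lem.5.3}, the assignment $U\longmapsto \pmb{H}^\vee$ is well-defined and continuous with discrete range, and since $c$ depends only on $\pmb{H}^\vee$ on $T'$, the composite $\tilde{c}$ is a genuine continuous $2$-coloring of the open set $[\emptyset,T'']$. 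In particular $\tilde{c}$ is Baire-measurable, hence has the property of Baire as a subset of $\mathcal{F}_\infty$.

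Now I would invoke the defining property of a topological Ramsey space: every property-of-Baire subset of $\mathcal{F}_\infty$ is Ramsey. Applying this to the set $\tilde{c}^{-1}(1)$ (which has the property of Baire) yields some $S''\in[\emptyset,T'']$ such that $[\emptyset,S'']$ is either entirely contained in $\tilde{c}^{-1}(1)$ or entirely disjoint from it; equivalently, $\tilde{c}$ is constant on $[\emptyset,S'']$. Unwinding the definition of $\tilde{c}$ via the envelopes, this says precisely that for every $\pmb{H}\in\big(\mathop{}_{\ \ \tau}^{\pmb{F(S'')}}\big)$ the value $c(\pmb{H})$ is the same fixed color, since each such $\pmb{H}$ arises from some $U\in\mathcal{C}_{\pmb{H}}\cap[\emptyset,S'']$. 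Taking $S:=S''$ (which is an $\pmb{F_{\max}}$-tree, every member of $\mathcal{F}_\infty$ being one by the earlier observation) gives the desired monochromatic copy.

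I expect the main obstacle to be verifying that the transferred coloring $\tilde{c}$ on $[\emptyset,T'']$ genuinely inherits the property of Baire and that the Ramsey-space dichotomy on $[\emptyset,S'']$ translates back faithfully to \emph{all} copies of $\pmb{H}$ in $\pmb{F(S)}$ rather than only to those copies whose $\vee$-projection is realized as an initial approximation $r_m(U)$ for some $U\le S$. The delicate point is matching each copy $\pmb{H}\in\big(\mathop{}_{\ \ \tau}^{\pmb{F(S)}}\big)$ with a witnessing envelope member $U\in\mathcal{C}_{\pmb{H}}\cap[\emptyset,S]$; this requires that $S$, being an $\pmb{F_{\max}}$-tree in $\mathcal{F}_\infty$, always admits such an extension, which follows because $\dn\pmb{H}^\vee=r_m(U)$ can be completed to a full member of $\mathcal{F}_\infty$ below $S$ using the $\pmb{F_{\max}}$-tree property. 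Once this bookkeeping is in place, the constancy of $c$ on $\big(\mathop{}_{\ \ \tau}^{\pmb{F(S)}}\big)$ is immediate.
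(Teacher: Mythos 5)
Your proposal is correct and follows essentially the same route as the paper's proof: reduce via the preceding lemma so that $c$ factors through $\pmb{H}\longmapsto \pmb{H}^\vee$, shrink into $\mathcal{F}_\infty(\tau)$, transfer the coloring to $[\emptyset, T'']$ via the envelope map $c_1$ of Lemma \ref{lem.5.3}, invoke the topological Ramsey space property from Theorem \ref{thm.FclasstRs}, and unwind by extending $\dn \pmb{H}^\vee$ (which lies inside $S$ for every copy $\pmb{H}$ in $\pmb{F(S)}$) to an envelope member $U\in \mathcal{C}_{\pmb{H}}\cap[\emptyset,S]$. The only cosmetic difference is the order of the first two steps (the paper shrinks into $\mathcal{F}_\infty$ before applying the factoring lemma, which is stated for $T\in\mathcal{F}_\infty$), and the ``delicate point'' you flag is exactly the one-line unwinding the paper performs.
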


\begin{proof}We can shrink $T$ and assume
$T\in \mathcal{F}_\infty$.   By Lemma 5.5,  $c$ depends only on $\pmb{H}^\vee$.  We may think of $c$ as a map as follows:
$$c : \Big\{\pmb{H}^\vee : \pmb{H}\in \big(\mathop{}_{\tau}^{\pmb{F(T)}}\big)\Big\}\longrightarrow 2.$$
Define $\bar{c} : [\emptyset, T ]\longrightarrow 2$ by $\bar{c}=c\circ c_1$. By Lemma 5.3, $\bar{c}$ is also a continuous
map.
By Theorem \ref{thm.FclasstRs},
 there is some $S\leq T$ such that $\bar{c}$   is monochromatic on $[\emptyset, S]$.
Suppose that $\pmb{H}\in \big(\mathop{}_{ \ \ \tau}^{\pmb{F(S)}}\big)$.
Then the universe  $H$ of $\pmb{H}$ is contained in
$[S]$, so $\pmb{H}^\vee\subseteq S$.
Hence there is a
$U\in [\emptyset, S]$ such that $U\in  \mathcal{C}_{\pmb{H}}$,
 and thus,  $c(\pmb{H})=\bar{c}(U)$.
 Therefore $c$ is monochromatic  on $\big(\mathop{}_{ \ \ \tau}^{\pmb{F(S)}}\big)$.
 \end{proof}

The next  lemma  is a straightforward  extension of Lemma  3.8 in \cite{SH19}.

\begin{lemma}  Let $T$ be an $\pmb{F_{\max}}$-tree. For every type $\tau$ of a finite induced substructure   of $\pmb{F_{\max}}$, and every Baire-measurable coloring $c : \big(\mathop{}_{ \ \ \tau}^{\pmb{F(T)}}\big)
\longrightarrow 2$, there is an
$\pmb{F_{\max}}$-subtree $S$ of $T$ such that $c$ is  continuous  on $\big(\mathop{}_{ \ \ \tau}^{\pmb{F(S)}}\big)$.
\end{lemma}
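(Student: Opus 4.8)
The plan is to reduce the statement to a category argument, using that a two-valued Baire-measurable map is continuous off a meager set, and then to build an $\pmb{F_{\max}}$-subtree all of whose copies of type $\tau$ avoid that meager set. First I would work inside the ambient Polish space $[\pmb{F_{\max}}]^n$, where $n$ is the number of elements of a structure of type $\tau$, and regard $\mathcal{X}_T := \big(\mathop{}_{ \ \ \tau}^{\pmb{F(T)}}\big)$ as a subset of it. Since $c$ is Baire-measurable, the color class $c^{-1}(0)$ has the property of Baire, so I may write $c^{-1}(0)=\mathcal{O}\triangle \mathcal{M}$ with $\mathcal{O}$ open and $\mathcal{M}$ meager, and fix a presentation $\mathcal{M}\subseteq \bigcup_{k<\omega}N_k$ with each $N_k$ closed and nowhere dense. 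The key observation is that if $S$ is any $\pmb{F_{\max}}$-subtree of $T$ with $\big(\mathop{}_{ \ \ \tau}^{\pmb{F(S)}}\big)\cap \mathcal{M}=\emptyset$, then on $\big(\mathop{}_{ \ \ \tau}^{\pmb{F(S)}}\big)$ the class $c^{-1}(0)$ coincides with $\mathcal{O}$, hence is relatively open, and symmetrically for $c^{-1}(1)$; so $c$ is continuous on $\big(\mathop{}_{ \ \ \tau}^{\pmb{F(S)}}\big)$. Everything thus reduces to constructing such an $S$ avoiding every $N_k$.

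For this I would run a fusion in the sense of Section 2, building $T=T_0\supseteq T_1\supseteq\cdots$ with witnesses $m_0<m_1<\cdots$ and setting $S=\bigcap_{j<\omega}T_j$, with bookkeeping following Mycielski's cube construction. At stage $j$ I consider the finitely many configurations of $n$ pairwise-distinct nodes of $T_j$ at level $m_j$ that can occur as the stem $\pmb{H}^\vee$ of a copy of type $\tau$; each such configuration determines a basic open box of copies in $\mathcal{X}_T$, namely the copies obtained by choosing branches through the chosen nodes. For each of these finitely many boxes and each $N_i$ with $i\le j$, I use that $N_i$ is nowhere dense to pass to a sub-box disjoint from $N_i$. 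Concretely this amounts to committing to deeper initial segments of the relevant branches, i.e. to choosing $m_{j+1}$ large and $T_{j+1}\subseteq T_j$ (agreeing with $T_j$ through level $m_j$) so that every copy of $\tau$ in $\pmb{F(T_{j+1})}$ whose stem refines one of these configurations lands in the chosen complement. Since at each stage only finitely many boxes and finitely many sets $N_i$ are treated, this is a single finite refinement.

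The hard part will be the Mycielski bookkeeping itself, and it is twofold. First, I must guarantee that the avoidance of each $N_i$ propagates to all copies of $\tau$ in the final $\pmb{F(S)}$, including those whose stems sit at arbitrarily high levels, and not merely to copies whose stems are decided at the stage where $N_i$ is introduced. This is why $N_i$ is treated against every current-level configuration at every stage $j\ge i$: given a copy $\pmb{H}$ of $\tau$ in $\pmb{F(S)}$ and an index $i$, its $n$ branches pass through pairwise-distinct nodes at level $m_j$ for all large $j$, so choosing such a $j\ge i$ places $\pmb{H}$ in a box made disjoint from the closed set $N_i$, whence $\pmb{H}\notin N_i$ and therefore $\pmb{H}\notin \mathcal{M}$. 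Second, each refinement must preserve universality so that the fusion limit is again an $\pmb{F_{\max}}$-tree; here I would invoke condition (3) (universality), since shrinking into a basic box corresponds to passing to cones $T_t$, and every such cone still embeds every $\pmb{F}\in\mathcal{K}$ above any prescribed node, so requirement (3) of a fusion sequence can be satisfied alongside the avoidance requirements. By the remark following the definition of fusion sequences, $S=\bigcap_{j<\omega}T_j$ is then an $\pmb{F_{\max}}$-tree avoiding every $N_k$, which completes the proof. The argument is identical in form to Lemma 3.8 of \cite{SH19}; the relations $R_0,\dots,R_{k-1}$ enter only through the definition of the types $\tau$ and not through the topological or category-theoretic steps.
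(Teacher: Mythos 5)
Your overall strategy is the same as the paper's (which in turn adapts Lemma 3.8 of Huber--Geschke--Kojman): use the property of Baire to split the coloring into an open part and a meager part, cover the meager part by countably many closed nowhere dense sets $N_k$, and run a fusion of $\pmb{F_{\max}}$-subtrees, at stage $j$ treating all level-$m_j$ configurations against all $N_i$ with $i\le j$, so that the fusion $S$ satisfies $\big(\mathop{}_{ \ \ \tau}^{\pmb{F(S)}}\big)\cap\bigcup_k N_k=\emptyset$. Your bookkeeping is in fact spelled out more explicitly than the paper's, and your insistence that each $T_{j+1}$ remain a union of full cones above the new level is exactly the standing assumption the paper makes on its fusion sequence; it is what allows nowhere density of the $N_i$ (taken relative to $\big(\mathop{}_{ \ \ \tau}^{\pmb{F(T)}}\big)$) to produce sub-boxes whose defining nodes lie in the current tree.

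There is, however, one genuine flaw, in the reduction step. You decompose only $c^{-1}(0)=\mathcal{O}\triangle\mathcal{M}$ and cover only $\mathcal{M}$ by the sets $N_k$, and then assert that if $\big(\mathop{}_{ \ \ \tau}^{\pmb{F(S)}}\big)$ avoids $\mathcal{M}$, then $c^{-1}(0)$ is relatively open on the copies in $\pmb{F(S)}$ ``and symmetrically for $c^{-1}(1)$,'' hence $c$ is continuous there. The symmetric half does not follow from what you arranged: avoiding $\mathcal{M}$ gives $c^{-1}(0)\cap\big(\mathop{}_{ \ \ \tau}^{\pmb{F(S)}}\big)=\mathcal{O}\cap\big(\mathop{}_{ \ \ \tau}^{\pmb{F(S)}}\big)$, which is relatively open, but then $c^{-1}(1)$ restricted to these copies is only relatively \emph{closed}, and a two-valued map whose $0$-class is open but not clopen is not continuous. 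Concretely, if $c^{-1}(0)$ happens to be open but not closed in $\big(\mathop{}_{ \ \ \tau}^{\pmb{F(T)}}\big)$, then one may take $\mathcal{M}=\emptyset$, so your avoidance requirement is vacuous, yet $c$ is discontinuous at every point of $c^{-1}(1)$ lying in the closure of $\mathcal{O}$, and nothing in your fusion prevents such points from surviving into $\big(\mathop{}_{ \ \ \tau}^{\pmb{F(S)}}\big)$, since your list of sets to avoid makes no reference to the boundary of $\mathcal{O}$. The repair is exactly what the paper does: decompose \emph{both} classes, $c^{-1}(0)=U\triangle M$ and $c^{-1}(1)=V\triangle N$, and require the closed nowhere dense sets $N_k$ to cover $M\cup N$; then on a subtree avoiding all $N_k$ both color classes coincide with traces of open sets, which gives continuity. (Equivalently, it suffices to add the single closed nowhere dense set $\partial\mathcal{O}$ to your list.) With that one change, your argument coincides with the paper's proof.
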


\begin{proof}Since $c : \big(\mathop{}_{ \ \ \tau}^{\pmb{F(T)}}\big)
\longrightarrow 2$ is Baire-measurable,
$c^{-1}(0)$ and $c^{-1}(1)$ have the property of Baire. Then there exist open sets $U$, $V$ in $\big(\mathop{}_{ \ \ \tau}^{\pmb{F(T)}}\big)$ and meager sets $M$, $N$   in $\big(\mathop{}_{ \ \ \tau}^{\pmb{F(T)}}\big)$ such that
$$c^{-1}(0)=U\triangle M \ \mbox{and} \ c^{-1}(1)=V\triangle N,$$
where $\triangle$ denotes   the symmetric difference.  Let $(N_n)_{n\in \omega}$ be a sequence of
closed nowhere dense subsets of $\big(\mathop{}_{ \ \ \tau}^{\pmb{F(T)}}\big)$
such that $M\cup N\subseteq \bigcup_{n\in \omega}N_n$. We would like to construct an $\pmb{F_{\max}}$-subtree $S$ of $T$ such that $\big(\mathop{}_{ \ \ \tau}^{\pmb{F(S)}}\big)$ is disjoint from $\bigcup_{n\in \omega}N_n$. In this case, we have
$$c^{-1}(0)\cap \big(\mathop{}_{\tau}^{\pmb{F(S)}}\big)=U\cap \big(\mathop{}_{\tau}^{\pmb{F(S)}}\big) \ \mbox{and} \ c^{-1}(1)\cap \big(\mathop{}_{\tau}^{\pmb{F(S)}}\big)=V\cap \big(\mathop{}_{\tau}^{\pmb{F(S)}}\big).$$
It follows that $c$ is continuous on $\big(\mathop{}_{ \ \ \tau}^{\pmb{F(S)}}\big)$.
In order to find an $\pmb{F_{\max}}$-subtree $S$ that is disjoint from $\bigcup_{n\in \omega}N_n$, we construct a fusion sequence $(T_j)_{j\in \omega}$ of  $\pmb{F_{\max}}$-subtrees of $T$ with witness a strictly
increasing sequence $(m_j)_{j\in \omega}$ of natural numbers.
Put $S=\bigcap_{j\in \omega}T_j$. Then $S$ is an $\pmb{F_{\max}}$-subtree of $T$.

Suppose $T_j$ and $m_j$ have already been chosen. We assume that for all
$t\in T_j(m_j)$ and all $s\in T$ with $t\subseteq s$, we have $s\in T_j$. For a certain $t\in T_j(m_j)$, we have to  find a splitting node $s$ with  $t\subseteq s$
such that for  a certain finite ordered  structure $\pmb{H}$, $\pmb{H}$ embeds into $\pmb{F
^{T_{j+1}}_s}$.
 Since $T_j$ is
an $\pmb{F_{\max}}$-tree, there is $m> m_j$ and an extension  $s$ of  $t$ with $|s|
< m$ such that $\pmb{H}$ embeds into $\pmb{F^{T_j}_s}$.

Suppose that $\pmb{H}$ is a finite substructure of $\pmb{F(T_j)}$ of type $\tau$ such that $\triangle(\pmb{H})< m$.
We list elements of $H$ as $t_0, t_1,\dots, t_p$.
 The set $\pmb{H}\upharpoonright m$ determines an open
subset $O$ of $\big(\mathop{}_{ \ \ \tau}^{\pmb{F(T)}}\big)$.
Since  $\bigcup_{n\leq j}N_n$ is closed and nowhere dense in $\big(\mathop{}_{ \ \ \tau}^{\pmb{F(T)}}\big)$,
 $O$ contains a  nonempty open subset that is disjoint from  $\bigcup_{n\leq j}N_n$.
 It
follows that for $i\in \{0,1,\dots,p\}$,    $t_i\upharpoonright m$ has an extension $s_i\in T_j$ such that
the open subset of $\big(\mathop{}_{ \ \ \tau}^{\pmb{F(T)}}\big)$
determined by $s_0, s_1, \dots, s_p$
is disjoint from  $\bigcup_{n\leq j}N_n$.
We may assume that $s_0, s_1, \dots, s_p$ have the same length $m_{j+1}> m$.

Let $X\subseteq T_j(m_{j+1})$
 be a set that contains exactly one extension of every
element of $T_j(m_{j})$  and in particular the elements $s_0,s_1,\dots,s_p$. Let
$$T_{j+1}=\{t\in T_j : \exists s\in X ( s\subseteq t\vee t\subseteq s)
\}.$$
Then $T_{j+1}$ is an $\pmb{F_{\max}}$-tree.
Whenever $\pmb{H^\prime}$
is a finite substructure of $\pmb{F(T_{j+1})}$
 of type $\tau$ with
$\pmb{H^\prime}\upharpoonright m=\pmb{H}\upharpoonright m$, then $\pmb{H^\prime}\upharpoonright m_{j+1}=\{s_0,s_1,\dots,s_p\}$.
In particular, $\pmb{H^\prime}\notin  \bigcup_{n\leq j}N_n$.  This finishes the recursive definition of the sequences $(T_j)_{j\in \omega}$
and $(m_j)_{j\in \omega}$.

 Finally, let $S=\bigcap_{j\in \omega}T_j$.
One can check  that $S$ is an $\pmb{F_{\max}}$-tree. Let $n\in \omega$ and let $\pmb{H}$ be a
finite substructure of $\pmb{F(S)}$ of type $\tau$. Then there is $j\in \omega$ such that $\triangle(\pmb{H})< m_j$. We
can choose $j\geq n$.
Note that $S(m_j)=T(m_j)$.  Since $S\subseteq T_{j+1}$, by the construction of $T_{j+1}$, $\pmb{H}\notin
\bigcup_{n\leq j}N_n$.
In particular, $H\notin N_n$. This shows that
$\big(\mathop{}_{ \ \ \tau}^{\pmb{F(S)}}\big)$
is disjoint from $\bigcup_{n\in \omega}N_n$. It follows that $c$ is continuous on $\big(\mathop{}_{ \ \ \tau}^{\pmb{F(S)}}\big)$.
\end{proof}

\begin{theorem}\label{thm.5.8}
 Let $T$ be an $\pmb{F_{\max}}$-tree.
 For every type $\tau$ of a finite induced  substructure of
 $\pmb{F_{\max}}$
  and every Baire-measurable coloring $c :
\big(\mathop{}_{ \ \ \tau}^{\pmb{F(T)}}\big)\longrightarrow 2$, there is an $\pmb{F_{\max}}$-subtree $S$ of $T$ such that $c$ is constant on $\big(\mathop{}_{ \ \ \tau}^{\pmb{F(S)}}\big)$.
\end{theorem}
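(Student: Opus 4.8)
The plan is to derive the statement as a purely formal composition of the two results that immediately precede it, reducing the Baire-measurable case to the already-settled continuous case. The key observation is that the genuine work is already done: the preceding lemma (Lemma 5.7) shows that a Baire-measurable coloring becomes continuous after passing to a suitable $\pmb{F_{\max}}$-subtree, while the preceding theorem (Theorem 5.6) shows that a continuous coloring becomes constant after passing to a further $\pmb{F_{\max}}$-subtree. Since the $\pmb{F_{\max}}$-subtree relation is transitive, chaining these two reductions produces a single $\pmb{F_{\max}}$-subtree on which $c$ is constant.

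First, given an arbitrary Baire-measurable coloring $c : \big(\mathop{}_{ \ \ \tau}^{\pmb{F(T)}}\big)\longrightarrow 2$, I would invoke Lemma 5.7 to obtain an $\pmb{F_{\max}}$-subtree $T'$ of $T$ such that the restriction of $c$ to $\big(\mathop{}_{ \ \ \tau}^{\pmb{F(T')}}\big)$ is continuous with respect to the topology on finite substructures of $\pmb{F_{\max}}$ introduced in Definition 5.4. The point to record here is that $T'$ is itself an $\pmb{F_{\max}}$-tree, so that it is a legitimate input to the continuous case.

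Next, I would apply Theorem 5.6 to the now-continuous coloring $c$ on $\big(\mathop{}_{ \ \ \tau}^{\pmb{F(T')}}\big)$ together with the $\pmb{F_{\max}}$-tree $T'$, obtaining an $\pmb{F_{\max}}$-subtree $S$ of $T'$ on which $c$ is monochromatic, hence constant, on $\big(\mathop{}_{ \ \ \tau}^{\pmb{F(S)}}\big)$. Because $S$ is an $\pmb{F_{\max}}$-subtree of $T'$ and $T'$ is an $\pmb{F_{\max}}$-subtree of $T$, transitivity gives that $S$ is an $\pmb{F_{\max}}$-subtree of $T$, which is exactly the conclusion of the theorem.

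I do not expect any real obstacle at this stage, precisely because the substantive difficulties have already been resolved upstream: the fusion-sequence construction of Lemma 5.7, which confines every copy of $\pmb{H}$ of type $\tau$ away from the meager exceptional sets, and the topological Ramsey space argument underlying Theorem 5.6 (which rests on Lemma \ref{lem.5.3} and Theorem \ref{thm.FclasstRs}). The only point requiring a moment's care is to verify that the output of each reduction is again an $\pmb{F_{\max}}$-tree, so that the two steps may be legitimately composed; this is immediate from the definition of an $\pmb{F_{\max}}$-subtree and the transitivity of inclusion.
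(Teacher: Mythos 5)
Your proposal is correct and matches the paper's own proof exactly: the paper likewise applies Lemma 5.7 to pass to an $\pmb{F_{\max}}$-subtree on which $c$ is continuous, then applies Theorem 5.6 to that subtree to obtain a further $\pmb{F_{\max}}$-subtree on which $c$ is constant. Your extra remark on transitivity of the subtree relation is a harmless (and valid) elaboration of a step the paper leaves implicit.
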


\begin{proof}
By Lemma 5.7, there is an $\pmb{F_{\max}}$-subtree $U$
 of $T$ such that $c$ is continuous on $\big(\mathop{}_{ \ \ \tau}^{\pmb{F(U)}}\big)$.
By Theorem 5.6, there is an $\pmb{F_{\max}}$-subtree $S$ of $U$
such
that $c$ is constant on $\big(\mathop{}_{ \ \ \tau}^{\pmb{F(S)}}\big)$.\end{proof}

\begin{theorem}\label{thm.5.9}
 Let $\mathcal{K}$ be a Fra\"{\i}ss\'{e} class,  in a finite   signature, of finite ordered binary relational structures with the Ramsey property.
  For every $\pmb{H}\in \mathcal{K}$,
   there is a finite number $T(\pmb{H}, \pmb{F_{\max}})$ such that the following holds:
   For every universal  inverse limit structure
$\pmb{G}$ and  for each finite Baire-measurable coloring of the set $\big(\mathop{}_{\pmb{H}}^{\pmb{G}}\big)$
of all copies of $\pmb{H}$ in $\pmb{G}$, there is a closed
copy $\pmb{G^\prime}$ of $\pmb{G}$  contained in
$\pmb{G}$ such that  the set $\big(\mathop{}_{\pmb{H}}^{\pmb{G^\prime}}\big)$
of all copies of $\pmb{H}$ in $\pmb{G^\prime}$ has no more than $T(\pmb{H}, \pmb{F_{\max}})$ colors.
In particular,  $T(\pmb{H}, \pmb{F_{\max}})$ is at most the number of types associated to $\pmb{H}$.
\end{theorem}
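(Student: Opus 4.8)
The plan is to set $T(\pmb{H}, \pmb{F_{\max}})$ equal to the number of types (strong isomorphism classes) of copies of $\pmb{H}$ in $\pmb{F_{\max}}$; this is finite because, as noted in Section 3, there are only finitely many types inside a single isomorphism class. Write $\tau_1, \dots, \tau_N$ for these types, so that for any $\pmb{F_{\max}}$-tree $T$ the set of copies decomposes as the disjoint union $\big(\mathop{}_{\pmb{H}}^{\pmb{F(T)}}\big) = \bigsqcup_{i=1}^N \big(\mathop{}_{\tau_i}^{\pmb{F(T)}}\big)$.

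First I would upgrade Theorem \ref{thm.5.8} from $2$ colors to an arbitrary finite number $l$ of colors. Given a Baire-measurable coloring $c : \big(\mathop{}_{\tau}^{\pmb{F(T)}}\big) \longrightarrow l$, each class $c^{-1}(j)$ has the property of Baire, so the binary colorings $c_j = \mathbbm{1}_{c^{-1}(j)}$ are Baire-measurable; applying Theorem \ref{thm.5.8} to $c_1$, then to $c_2$ restricted to the resulting $\pmb{F_{\max}}$-subtree, and so on, yields after finitely many steps an $\pmb{F_{\max}}$-subtree on which every $c_j$, and hence $c$ itself, is constant. Here I would use that constancy on a type is inherited by any further $\pmb{F_{\max}}$-subtree, since $S \subseteq S^\prime$ gives $\big(\mathop{}_{\tau}^{\pmb{F(S)}}\big) \subseteq \big(\mathop{}_{\tau}^{\pmb{F(S^\prime)}}\big)$.

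Next I would iterate over the finitely many types. Starting from $S_0 = T_{\max}$, I apply the $l$-color version of Theorem \ref{thm.5.8} successively to the types $\tau_1, \dots, \tau_N$, producing a decreasing chain $S_0 \supseteq S_1 \supseteq \cdots \supseteq S_N =: S$ of $\pmb{F_{\max}}$-trees such that $c$ is constant on $\big(\mathop{}_{\tau_i}^{\pmb{F(S_i)}}\big)$ for each $i$. By the persistence of constancy under shrinking noted above, $c$ is then constant on each $\big(\mathop{}_{\tau_i}^{\pmb{F(S)}}\big)$ simultaneously, so $c$ takes at most $N = T(\pmb{H}, \pmb{F_{\max}})$ colors on $\big(\mathop{}_{\pmb{H}}^{\pmb{F(S)}}\big)$; moreover $\pmb{F(S)}$ is again a universal inverse limit structure by the observation following the definition of universal inverse limit in this section.

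Finally I would transfer this from the canonical object $\pmb{F_{\max}} = \pmb{F(T_{\max})}$ to an arbitrary universal inverse limit structure $\pmb{G}$. Invoking the uniqueness of the universal inverse limit structure from Huber-Geschke-Kojman, fix a topological isomorphism $h : \pmb{F_{\max}} \longrightarrow \pmb{G}$; pulling the given coloring back along $h$ produces a Baire-measurable coloring of $\big(\mathop{}_{\pmb{H}}^{\pmb{F_{\max}}}\big)$ to which the previous paragraphs apply, yielding an $\pmb{F_{\max}}$-subtree $S$ as above. Since $[S]$ is closed in $[T_{\max}]$ and $\pmb{F(S)} \cong \pmb{G}$, the image $\pmb{G^\prime} := h[\pmb{F(S)}]$ is a closed copy of $\pmb{G}$ contained in $\pmb{G}$ on which the original coloring realizes at most $T(\pmb{H}, \pmb{F_{\max}})$ colors. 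The step I expect to be the main obstacle is this last transfer: it rests on the interface with Huber-Geschke-Kojman, namely that every universal inverse limit structure is isomorphic to $\pmb{F_{\max}}$ via a homeomorphism preserving the order and all the relations, so that both Baire-measurability and the ``closed copy'' property are preserved; by contrast, the genuine Ramsey-theoretic content is already supplied by Theorem \ref{thm.5.8}.
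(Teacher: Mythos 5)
Your proposal is correct and follows essentially the same route as the paper: the paper also takes $T(\pmb{H},\pmb{F_{\max}})$ to be the number of types of $\pmb{H}$, iterates Theorem \ref{thm.5.8} to produce a decreasing chain of $\pmb{F_{\max}}$-trees $T_{\max}\geq T_{\tau_0}\geq\cdots\geq T_{\tau_{m-1}}$ on which $c$ becomes constant type by type (with the $2$-color-to-$l$-color upgrade and the persistence of constancy under shrinking left implicit in the phrase ``iterating Theorem \ref{thm.5.8}''), and then takes $\pmb{G^\prime}=\pmb{F(T_{\tau_{m-1}})}$. The only difference is cosmetic: the paper phrases the transfer step via a continuous embedding of $\pmb{F_{\max}}$ into $\pmb{G}$ and the assertion that $\pmb{F(T_{\tau_{m-1}})}$ is a closed copy of $\pmb{G}$, whereas you invoke the Huber--Geschke--Kojman uniqueness of universal inverse limit structures directly; both versions rest on exactly this same interface with \cite{SH19}.
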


\begin{proof}
We list all types for $\pmb{H}$ as $\tau_0, \tau_1,\dots, \tau_{m-1}$. Since $\pmb{G}$ is a universal  inverse limit structure, $\pmb{F_{\max}}$ embeds
continuously into it. Let $c$ be a finite Baire-measurable coloring of the set $\big(\mathop{}_{\pmb{H}}^{\pmb{G}}\big)$. Now work with the tree $T_{\max}$ coding $\pmb{F_{\max}}$.
Iterating Theorem \ref{thm.5.8},  there are  $\pmb{F_{\max}}$-trees $T_{\max}\geq T_{\tau_0}\geq \cdots \geq T_{\tau_{m-1}}$
so
that for each $i< m$, $c$ is constant on $\big(\mathop{}_{ \ \ \tau_i}^{\pmb{F(T_{\tau_i})}}\big)$.
We take $\pmb{G^\prime}=\pmb{F(T_{\tau_{m-1}}})$. Then $\pmb{G^\prime}$ is a closed
copy of $\pmb{G}$ contained in $\pmb{G}$,
and the set $\big(\mathop{}_{\pmb{H}}^{\pmb{G^\prime}}\big)$
of all copies of $\pmb{H}$ in $\pmb{G^\prime}$ has no more than $m$ colors.
\end{proof}

\vspace*{0.1cm}

\section{Exact big Ramsey degrees for some ordered binary  relational  inverse limit structures}

\vspace*{0.2cm}

In this section, we find the exact big Ramsey degrees in the
inverse limit structures  $\pmb{F_{\max}}$ of  the following Fra\"{\i}ss\'{e} classes in an ordered binary relational signature:
Free amalgamation classes, the class of
 finite ordered tournaments $\mathcal{OT}$,
  and the class of  finite  partial orders with a linear extension $\mathcal{OPO}$.
We shall do so by first  showing in Lemmas
\ref{lem.6.1},
\ref{lem.6.4},
and
\ref{lem.6.7}
that for any finite substructure $\pmb{H}$ of
$\pmb{F_{\max}}$, there is a larger finite structure $\pmb{\overline{H}}$  containing $\pmb{H}$ as an induced substructure such that
any copy of $\pmb{\overline{H}}$ in the inverse limit structure $\pmb{F_{\max}}$ must have exactly one meet in the tree $T_{\max}$.
Then in Theorem
\ref{thm.main},
we shall prove by induction on number of splitting nodes that each type persists in any  subcopy of $\pmb{F_{\max}}$.
This proves  that the exact big Ramsey degree for a given finite  substructure $\pmb{H}$ of $\pmb{F_{\max}}$
is exactly the number of types $\tau$ representing
$\pmb{H}$ in the tree $T_{\max}$.

Given a structure $\pmb{G}\le \pmb{F_{\max}}$, let $T_{\pmb{G}}=\{x\upharpoonright n : x\in \pmb{G}, n\in \omega\}$. Then $T_{\pmb{G}}$ is a subtree of  $T_{\max}$.
Given a tree $T$,  its {\em stem},  denoted stem$(T)$,  is the minimal  splitting node in $T$.

\begin{lemma}\label{lem.6.1}
Let $\mathcal{K}$ be any
\Fraisse\ class with free amalgamation
in an ordered binary relational signature.
Then for each $\pmb{H}\in  \mathcal{K}$,
there is  a  structure $\pmb{\overline{H}}\in \mathcal{K}$ containing a copy of $\pmb{H}$, where $\pmb{\overline{H}}$  has the following property:
 Given a universal  inverse limit structure
 $\pmb{G}$ for $\mathcal{K}$ contained in
 $\pmb{F_{\max}}$, every  copy $\pmb{\overline{I}}$ of $\pmb{\overline{H}}$ in
 $\pmb{G}$ has induced a subtree $T_{\pmb{\overline{I}}}$  of $T_{\pmb{G}}$ such that
 the type of $T_{\pmb{\overline{I}}}$ has exactly one splitting node.
 It follows that the
  immediate successors of stem$(T_{\pmb{\overline{I}}})$ in $T_{\pmb{\overline{I}}}$  have a copy of $\pmb{\overline{I}}$.
\end{lemma}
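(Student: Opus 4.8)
The plan is to prove the statement in two stages: an abstract combinatorial reduction valid for \emph{any} copy, and then a construction of $\pmb{\overline{H}}$ that, via free amalgamation, rules out the bad configurations. Call a convex (order-interval) subset $A$ of a finite induced substructure $\pmb{J}$ of $\pmb{F_{\max}}$ a \emph{convex module} if $2\le |A|<|J|$ and every element $w\in J\setminus A$ bears exactly the same relations to all members of $A$; that is, for all $x,y\in A$, all $\ell<k$, and both orders of the arguments, $(w,x)\in R_\ell^{\pmb{F_{\max}}}\Longleftrightarrow(w,y)\in R_\ell^{\pmb{F_{\max}}}$. The first claim I would establish is: if $T_{\pmb{\overline{I}}}$ has two or more splitting nodes, then $\pmb{\overline{I}}$ has a convex module. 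Let $s=\mbox{stem}(T_{\pmb{\overline{I}}})$ and group the branches of $\pmb{\overline{I}}$ by the immediate successor of $s$ they extend; each group is convex in the lexicographic order, there are at least two nonempty groups, and since $T_{\pmb{\overline{I}}}$ is not a star at least one group $A$ has $|A|\ge 2$. For $w\notin A$ and $x,y\in A$ one has $w\cap x=w\cap y=s$ while $|x\cap y|>|s|$, so $x\upharpoonright(|s|+1)=y\upharpoonright(|s|+1)$; by the defining clause of Definition~\ref{defn.F_max}, the relations between $w$ and $x$ and between $w$ and $y$ are read off the single structure $\pmb{F_s}$ and hence coincide. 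Thus $A$ is a convex module. This step uses only the meet-based definition of $\pmb{F_{\max}}$, not free amalgamation.

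Since order-isomorphisms preserve convexity and all relations, they carry convex modules to convex modules, so it suffices to produce $\pmb{\overline{H}}\in\mathcal{K}$ containing $\pmb{H}$ as an induced substructure and having \emph{no} convex module: then every copy $\pmb{\overline{I}}\cong\pmb{\overline{H}}$ has none either, and by the reduction $T_{\pmb{\overline{I}}}$ has exactly one splitting node. To build $\pmb{\overline{H}}$ I would invoke free amalgamation, writing $\mathcal{K}=\mathrm{Forb}(\mathcal{F})$ for a finite set $\mathcal{F}$ of irreducible structures and fixing a relation symbol $R_0$ realized by some two-element member of $\mathcal{K}$. The idea is to attach to each element $v_i$ of $\pmb{H}$ a private \emph{identifier}: a fresh point $z_i$ that is $R_0$-related to $v_i$ and to nothing else, placed in the order outside $\pmb H$. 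Amalgamating on such a singly-attached point cannot create a forbidden configuration, since any copy of a member of $\mathcal{F}$ through $z_i$ would, by irreducibility, have to lie inside the two-element set $\{z_i,v_i\}$; hence each extension stays in $\mathcal{K}$ and $\pmb H$ remains induced. The markers then force rigidity: a marker lying outside a candidate module $A$ is related to exactly one point of $A$ and so distinguishes a pair inside it, while a module lying among the markers (or among markers together with a single original point) is broken by the unique original point carrying the relevant identifier.

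Finally, once $T_{\pmb{\overline{I}}}$ has a unique splitting node $s=\mbox{stem}(T_{\pmb{\overline{I}}})$, every branch of $\pmb{\overline{I}}$ passes through a distinct immediate successor of $s$ (a repeated successor would create a higher splitting node), so $|x\cap y|=|s|$ for all distinct $x,y\in\overline{I}$; the map $x\mapsto x\upharpoonright(|s|+1)$ is then an order-isomorphism of $\pmb{\overline{I}}$ onto the induced substructure of $\pmb{F_s}$ carried by $\pmb{\overline{I}}^{\vee}\subseteq\mbox{succ}_{T_{\max}}(s)$, so the immediate successors of the stem carry a copy of $\pmb{\overline{I}}$, as claimed. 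I expect the main obstacle to be the verification in the second paragraph: choosing the order positions of the markers and checking that \emph{no} convex module survives among the enlarged point set — including sets that mix original points with markers, and modules among markers themselves — requires a careful finite case analysis, and it is precisely here that free amalgamation is essential. For tournaments and partial orders, where relations cannot simply be omitted, this rigidity must be obtained by different gadgets, which is why Lemmas~\ref{lem.6.4} and~\ref{lem.6.7} are treated separately.
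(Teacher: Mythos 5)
Your first paragraph is correct, and it is a clean abstraction of the mechanism underlying the paper's proof: if $T_{\pmb{\overline{I}}}$ had two or more splitting nodes, the branches passing through one immediate successor of the stem would form a set $A$ with $2\le |A|<|\overline{I}|$ which is lexicographically convex and which no outside point can distinguish, since all relations between $A$ and its complement are evaluated at the stem. Your final paragraph is also correct. The gap is the middle step, which is the actual content of the lemma: you must exhibit $\pmb{\overline{H}}\in\mathcal{K}$, containing $\pmb{H}$, with \emph{no} convex module, and this is exactly what you defer as ``a careful finite case analysis.'' Moreover, the marker construction as you describe it does not yet do the job, because it is sensitive to where the markers sit in the linear order, and the phrase ``placed in the order outside $\pmb{H}$'' admits placements for which the lemma is false. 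Concretely, let $\pmb{H}$ be two vertices $v_1<v_2$ with no relations between them (such a structure lies in every free amalgamation class, since forbidden structures are irreducible), and place the markers as $z_1<v_1<v_2<z_2$, with $z_i$ $R_0$-related to $v_i$ and no other relations. Then $\{z_1,v_1\}$ is a convex module: neither $v_2$ nor $z_2$ is related to either of its points. The corresponding bad copies really occur: in $\pmb{F_{\max}}$ one can choose a splitting node $s$ whose successor structure $\pmb{F_s}$ contains an unrelated pair $a<b$, then split $\{z_1,v_1\}$ above $a$ and $\{v_2,z_2\}$ above $b$; this is a copy of your $\pmb{\overline{H}}$ whose induced tree has three splitting nodes, so the conclusion of the lemma fails for it. The same failure occurs whenever some $v_i$ is unrelated to the rest of $\pmb{H}$ and $z_i$ is order-adjacent to $v_i$.

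The construction can be repaired, but the repair is precisely the analysis you skipped, and the order placement is the crux rather than a detail. Place \emph{all} markers on one side, say $z_1<\cdots<z_m<v_1<\cdots<v_m$. Then: (i) if a module $A$ contains $v_i$ but not $z_i$, then $z_i$ distinguishes $v_i$ from any second point of $A$, since $z_i$ is related to nothing else; (ii) if $A$ contains $z_i$ but not $v_i$, then $v_i$ must be related to every point of $A$, which rules out all other markers, and by (i) it rules out all originals, forcing $A=\{z_i\}$, too small; hence $A=\{z_i,v_i : i\in S\}$ for some proper nonempty $S\subseteq\{1,\dots,m\}$; and (iii) for $i\in S$, convexity forces everything between $z_i$ and $v_i$ --- namely all markers $z_k$ with $k>i$ and all originals $v_k$ with $k<i$ --- into $A$, whence $S=\{1,\dots,m\}$, a contradiction. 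With this placement your argument goes through, and it is then a genuinely different gadget from the paper's: the paper instead interleaves $m+1$ new vertices with those of $\pmb{H}$, puts an $R_0$-path on the new vertices with no relations between the path and $\pmb{H}$, and argues directly on meets (Claims \ref{claim.6.2} and \ref{claim.6.3}), the non-relations between non-consecutive path vertices playing the role your markers were intended to play.
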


\begin{proof}
Let $\mathcal{K}$ be
as in the hypotheses,
let  $\pmb{G}$ be the universal inverse limit for $\mathcal{K}$ contained in
 $\pmb{F_{\max}}$, and fix  $\pmb{H}$  a finite substructure of
$\pmb{G}$.
Then $\pmb{H}$  is in $\mathcal{K}$.
Let $m$ be the size of the universe of $\pmb{H}$.
We construct a finite ordered structure
$\pmb{\overline{H}}\in \mathcal{K}$ of size $2m+1$,
  containing a copy of  $\pmb{H}$ as a substructure on the odd indexed vertices,
as follows.
Let $R$ denote the
 binary relation symbol $R_0$ in the signature of
$\mathcal{K}$.
\begin{enumerate}
\item
 Let $\overline{H}=\{v_0,v_1,v_2, \dots ,  v_{2m}\}$.
\item
$\pmb{\overline{H}}\upharpoonright\{v_1, v_3,\dots, v_{2m-1}\}$ is isomorphic to $\pmb{H}$.
\item
For $i\in \{0, 2,\dots, 2m-2\}$,
$R^{\pmb{\overline{H}}}(v_i, v_{i+2})$ holds.
If $R^{\pmb{\overline{H}}}$ is a symmetric relation, then also
$R^{\pmb{\overline{H}}}(v_{i+2}, v_i)$ holds;
otherwise, $\neg R^{\pmb{\overline{H}}}(v_{i+2}, v_i)$ holds.
\item
No other relations are added to $\pmb{\overline{H}}$.
\end{enumerate}

Then $\pmb{\overline{H}}$ contains a copy of $\pmb{H}$.  Now we check that $\pmb{\overline{H}}$ satisfies the property in this lemma.
Let  $\pmb{\overline{I}}$ be a copy of $\pmb{\overline{H}}$ in $\pmb{G}$ with $\overline{I}=\{u_0,u_1,u_2, \dots,  u_{2m}\}$.

\begin{claim}\label{claim.6.2}
Let $\pmb{J}$  be the  induced substructure
of $\pmb{\overline{I}}$ on  universe $J=\{u_0,u_2, \dots , u_{2m}\}$.
Then the associated subtree  $T_{\pmb{J}}$  of  $T_{\pmb{G}}$  has exactly one  splitting node.
\end{claim}

\begin{proof}
Without loss of generality, it suffices to prove that $u_0\cap u_2=u_2\cap u_4$.
 Assume to the contrary  that $u_0\cap u_2\neq u_2\cap u_4$. Then either
$u_0\cap u_2\subset u_2\cap u_4$ or else $u_2\cap u_4\subset u_0\cap u_2$,
where $\subset$ denotes proper subset.
If
$u_0\cap u_2\subset u_2\cap u_4$
(see Figure 20),
then
$u_0\cap u_2= u_0\cap u_4$;
let  $s$ denote this node and let $l-1$ denote its  length.
Then $u_2\upharpoonright l= u_4\upharpoonright l$ is a successor of $s$,
and
the  relation $R(u_0\upharpoonright l ,u_4\upharpoonright l)$
 holds   in $T_{\max}$ since  $R(u_0\upharpoonright l, u_2\upharpoonright l)$ holds in $T_{\max}$.
Hence,
$R^{\pmb{I}}(u_0, u_4)$ holds.
Similarly, if $u_2\cap u_4\subset u_0\cap u_2$,
 then $u_0\cap u_4=u_2\cap u_4$, and it follows that
 $R^{\pmb{I}}(u_0, u_4)$ holds (see Figure 20).
But this contradicts the fact that  $R^{\pmb{I}}(u_0,u_4)$ does not hold, by (3) in the defnition
of $\pmb{\overline{H}}$ above.
Therefore, it must be the case that
$u_0\cap u_2= u_2\cap u_4$.
Thus, every copy of $\pmb{J}$ has induced subtree $T_{\pmb{J}}$   in $T_{\pmb{G}}$  with exactly one  splitting node.
\end{proof}

\begin{center}
\centering
\includegraphics[totalheight=2.0in]{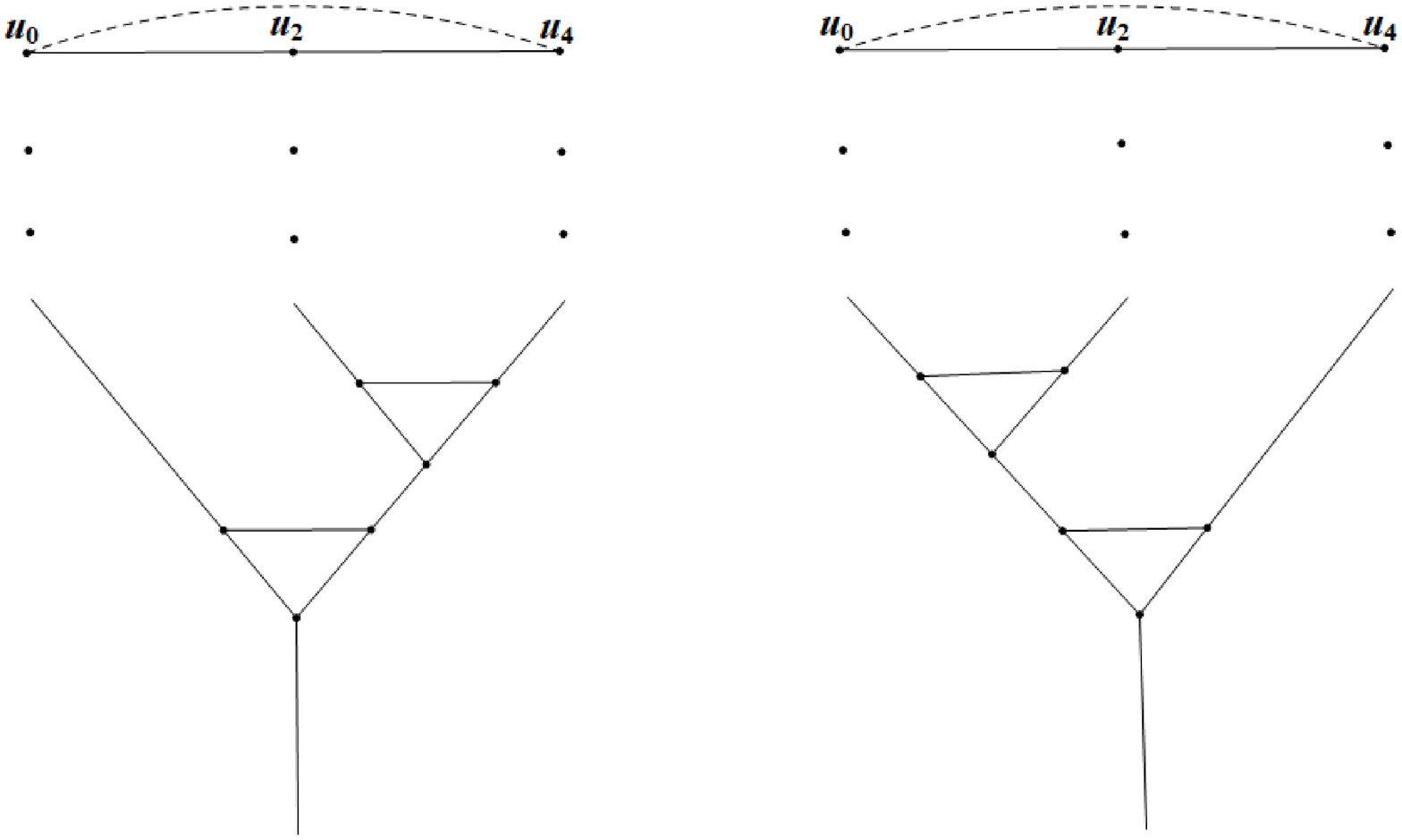}\\Figure 20
\end{center}

\begin{claim}\label{claim.6.3}
The subtree  $T_{\pmb{\overline{I}}}$  of  $T_{\pmb{G}}$
induced by  $\pmb{\overline{I}}$ has exactly one splitting node.
\end{claim}

\begin{proof}
By Claim \ref{claim.6.2}, without loss of generality, it suffices to prove that $u_0\cap u_1=u_1\cap u_2$. Assume that $u_0\cap u_1\neq u_1\cap u_2$.
 Then either
$u_0\cap u_1\subset u_1\cap u_2$ or $u_1\cap u_2\subset u_0\cap u_1$.
Since $u_0<u_1<u_2$ in the linear order $<$ on  the universe of
$\pmb{F_{\max}}$,
it follows that the set $\{u_0,u_1,u_2\}$ has type  equal to one of the two types  in Figure 21 (the solid lines).
Since $R^{\pmb{\overline{I}}}(u_0, u_2)$ holds,
at least one of
 $R^{\pmb{\overline{I}}}(u_0, u_1)$ or
 $R^{\pmb{\overline{I}}}(u_1, u_2)$
 holds (the dashed lines in Figure 21).
 But this contradicts the fact that $ \neg R^{\pmb{\overline{I}}}(u_0, u_1)$ and
  $\neg R^{\pmb{\overline{I}}}(u_1, u_2)$  hold, by (4) in the definition of $\pmb{\overline{H}}$.
  Thus, the induced subtree $T_{\pmb{\overline{I}}}$ has exactly one
   splitting node.\end{proof}

\begin{center}
\centering
\includegraphics[totalheight=2.0in]{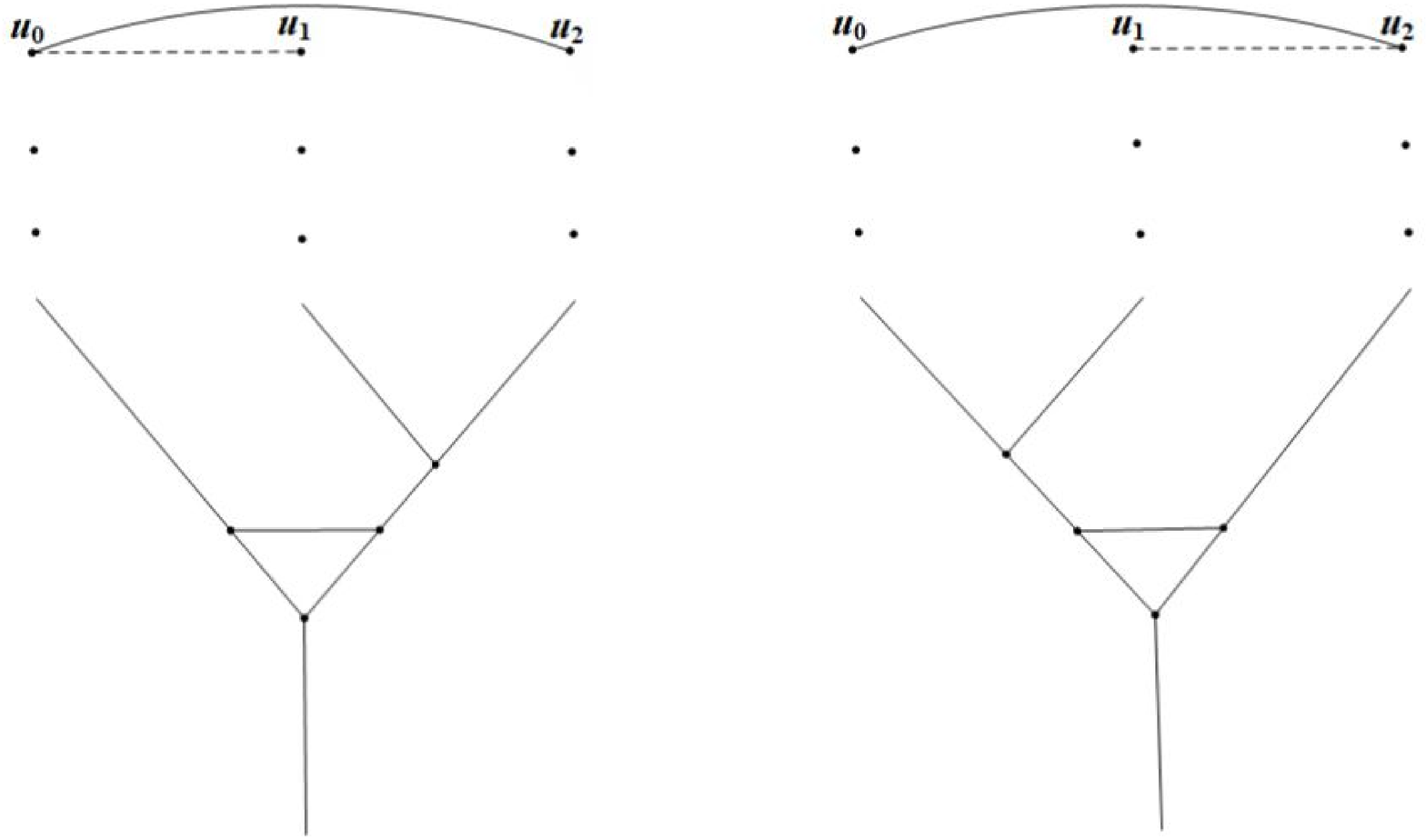}\\Figure 21
\end{center}

By Claim \ref{claim.6.3}, the induced subtree $T_{\pmb{\overline{I}}}$  of $T_{\pmb{G}}$
has exactly one splitting node, namely its stem.
It follows from the definition of the relations on
$\pmb{F_{\max}}$ that
the immediate successors of stem$(T_{\pmb{\overline{I}}})$ in $T_{\pmb{\overline{I}}}$ is isomorphic to
$\pmb{\overline{H}}$.
\end{proof}

Next, we prove a similar lemma for  the  class of finite ordered tournaments, $\mathcal{OT}$.
The proof is similar to the previous lemma, the main difference being that the construction of $\pmb{\overline{H}}$ must take into account  the fact that any two vertices of a tournament must have some directed  edge relation between them.

\begin{lemma}\label{lem.6.4}
 Let $\mathcal{K}$ be $\mathcal{OT}$.
    Then for each $\pmb{H}\in  \mathcal{K}$,
there is  a structure  $\pmb{\overline{H}}\in \mathcal{K}$ containing a copy of $\pmb{H}$
and $\pmb{\overline{H}}$  has the following property:
 Given a universal  inverse limit structure
 $\pmb{G}$ for $\mathcal{K}$
 contained in $\pmb{F_{\max}}$,
 every  copy $\pmb{\overline{I}}$ of $\pmb{\overline{H}}$ in
 $\pmb{G}$ has induced a subtree $T_{\pmb{\overline{I}}}$  of $T_{\pmb{G}}$ such that
 the type of $T_{\pmb{\overline{I}}}$  has exactly one splitting node.
 It follows that the
 immediate successors of stem$(T_{\pmb{\overline{I}}})$ in $T_{\pmb{\overline{I}}}$  have a copy of $\pmb{\overline{I}}$.
\end{lemma}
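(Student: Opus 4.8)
The plan is to mirror the proof of Lemma \ref{lem.6.1}, building an enlargement $\pmb{\overline{H}}$ of $\pmb{H}$ on the vertex set $\overline{H}=\{v_0,v_1,\dots,v_{2m}\}$, where $m=|H|$, placing a copy of $\pmb{H}$ on the odd-indexed vertices $\{v_1,v_3,\dots,v_{2m-1}\}$ in their given order and interleaving the auxiliary even-indexed vertices between them. The only essential difference from the free-amalgamation case is that a tournament admits no pair without a relation, so I cannot use the \emph{absence} of a relation to block the two unwanted tree shapes; instead I will orient the auxiliary edges so that every relevant triple is a directed $3$-cycle, and it is precisely the cyclic orientation that will be incompatible with a two-level meet.

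The engine of the argument is the following observation about three branches $p<q<r$ of $T_{\max}$ in the lexicographic order, writing $R$ for the edge relation $R_0$. If the subtree generated by $\{p,q,r\}$ has two splitting nodes, then either $p$ splits off first, so $p\cap q=p\cap r$ and $q,r$ pass through the same immediate successor of the meet, whence $R(p,q)\Leftrightarrow R(p,r)$; or $r$ splits off first, so $p\cap r=q\cap r$ and $p,q$ pass through the same immediate successor of the meet, whence $R(p,r)\Leftrightarrow R(q,r)$. Thus a two-level meet forces two of the three edges to \emph{agree}. Consequently, if $\{p,q,r\}$ is a directed $3$-cycle then neither agreement can hold, so the three branches must split at a single node. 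I will therefore prescribe, for each $i$, that the consecutive even triple $(v_{2i},v_{2i+2},v_{2i+4})$ carry the cycle $R(v_{2i},v_{2i+2})$, $R(v_{2i+2},v_{2i+4})$, $R(v_{2i+4},v_{2i})$, and that each even--odd--even triple $(v_{2i},v_{2i+1},v_{2i+2})$ carry the cycle $R(v_{2i},v_{2i+2})$, $R(v_{2i+2},v_{2i+1})$, $R(v_{2i+1},v_{2i})$; all remaining pairs are oriented arbitrarily. The long edge $v_{2i}v_{2i+2}$ of each even--odd--even triple is the short edge of the corresponding even triple, and both prescribed cycles orient it forward, so the prescriptions overlap consistently and $\pmb{\overline{H}}$ is a well-defined finite ordered tournament, hence a member of $\mathcal{K}$.

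Given a copy $\pmb{\overline{I}}$ with vertices $u_0<u_1<\dots<u_{2m}$, the proof then splits into two claims, as in Lemma \ref{lem.6.1}. First, applying the engine to each consecutive even triple (a $3$-cycle by construction) yields $u_{2i}\cap u_{2i+2}=u_{2i+2}\cap u_{2i+4}$ for all $i$; chaining these equalities produces a single node $s$, and since $u_{2i}<u_{2i+2}<u_{2i+4}$ lexicographically, lex-betweenness forces $u_{2i+2}$ through an immediate successor of $s$ strictly between those of $u_{2i}$ and $u_{2i+4}$, so all even branches split at $s$ through distinct successors. Second, applying the engine to each even--odd--even triple gives $u_{2i}\cap u_{2i+1}=u_{2i+1}\cap u_{2i+2}=s$, and lex-betweenness again places $u_{2i+1}$ at a successor of $s$ strictly between those of $u_{2i}$ and $u_{2i+2}$; comparing these successors in order shows that all $2m+1$ branches pass through pairwise distinct immediate successors of $s$. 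Hence $T_{\pmb{\overline{I}}}$ has exactly one splitting node $s=\mathrm{stem}(T_{\pmb{\overline{I}}})$, and by the definition of the relations on $\pmb{F_{\max}}$ at level $|s|+1$ the immediate successors of $s$ carry a copy of $\pmb{\overline{I}}$.

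I expect the main obstacle to be exactly the point forced by the tournament hypothesis: because no pair can be left unrelated, I must find a single orientation pattern that simultaneously defeats both the ``$p$ splits first'' and the ``$r$ splits first'' configurations, and the directed $3$-cycle is what accomplishes this (in the free-amalgamation proof this role is played by an unrelated pair, which tournaments forbid). A secondary subtlety, handled above, is that the constrained triples overlap in only one vertex, so I must use the lexicographic betweenness of the interleaved vertices---rather than any further edge constraints---to upgrade ``each constrained triple splits at a single node'' to ``all of $\pmb{\overline{I}}$ splits at a single node.''
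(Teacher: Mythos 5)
Your proposal is correct and takes essentially the same approach as the paper: the paper's edge rules (3)--(6) place exactly your directed $3$-cycles on the consecutive even triples ($R(v_{2i},v_{2i+2})$, $R(v_{2i+2},v_{2i+4})$, $R(v_{2i+4},v_{2i})$) and on the even--odd--even triples ($R(v_{2i},v_{2i+2})$, $R(v_{2i+2},v_{2i+1})$, $R(v_{2i+1},v_{2i})$), and its two Claims are your two steps, using the same dichotomy (leftmost or rightmost branch splits off first, forcing two edges to agree) to derive the contradiction. The only cosmetic difference is that you leave the unconstrained pairs arbitrary where the paper orients them all backward, and you package the contradiction as an explicit ``$3$-cycle engine.''
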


\begin{proof}
Fix any $\pmb{H}\in  \mathcal{K}$, and let
$m$ be the size of the universe of
 $\pmb{H}$.
 Recall that the relation $R$ here is a directed edge.
We construct a finite ordered structure
$\pmb{\overline{H}}\in \mathcal{K}$ containing a copy of
$\pmb{H}$ as an induced substructure as follows:
\begin{enumerate}
\item
Let $\overline{H}=\{v_0,v_1,v_2, \dots , v_{2m}\}$.
\item
$\pmb{\overline{H}}\upharpoonright\{v_1, v_3,\dots, v_{2m-1}\}$ is isomorphic to $\pmb{H}$.
\item
 For $i, j\in \{0, 2,\dots, 2m\}$ with $i< j$,  if  $j=i+2$
 then
$R^{\pmb{\overline{H}}}(v_i, v_j)$ holds.
\item
 For $i, j\in \{0, 2,\dots, 2m\}$ with $i< j$, if  $j\neq i+2$
 then
$R^{\pmb{\overline{H}}}(v_j, v_i)$ holds.
\item
 For all $i\in \{0, 2,\dots, 2m\}$ and $j\in \{1, 3,\dots, 2m-1\}$,\ if $i< j$ then
 $R^{\pmb{\overline{H}}}(v_j,v_i)$ holds.
\item
 For all $i\in \{0, 2,\dots, 2m\}$ and $j\in \{1, 3,\dots, 2m-1\}$,\ if $j< i$ then
$R^{\pmb{\overline{H}}}(v_i,v_j)$ holds.
\end{enumerate}
By (2),  $\pmb{\overline{H}}$ contains a copy of $\pmb{H}$. Now we check that $\pmb{\overline{H}}$ satisfies the property in this lemma.
Let  $\pmb{\overline{I}}$ be  any copy of $\pmb{\overline{H}}$ in $\pmb{G}$, say with universe $\overline{I}=\{u_0,u_1,u_2, ..., u_{2m}\}$.

\begin{claim}\label{claim.6.5}
 Let $\pmb{J}$  be the  induced substructure
of $\pmb{\overline{I}}$ on universe $J=\{u_0,u_2, ..., u_{2m}\}$.
Then
the associated  subtree $T_{\pmb{J}}$ of $T_{\pmb{G}}$ has exactly one splitting node.
\end{claim}

\begin{center}
\centering
\includegraphics[totalheight=2.2in]{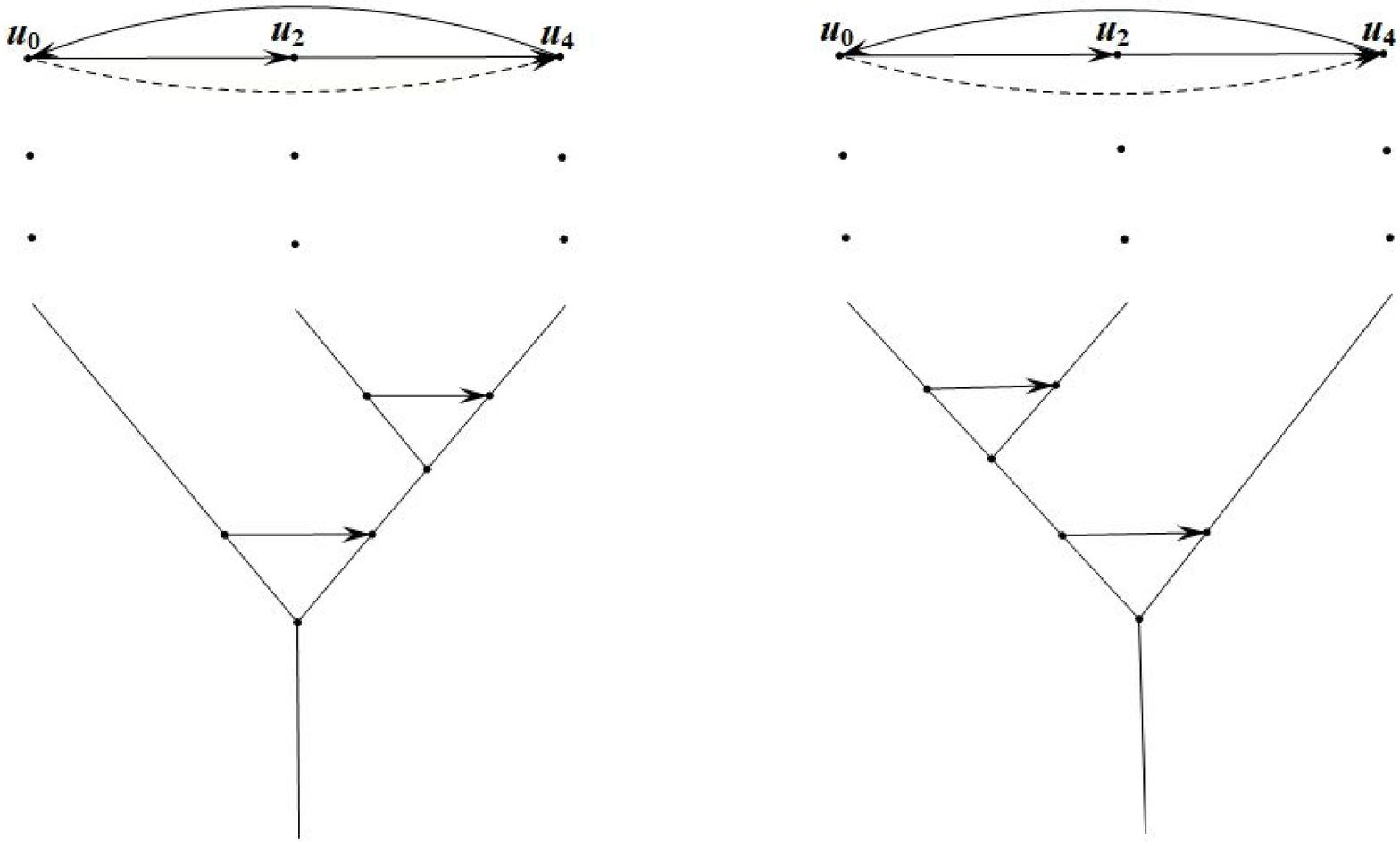}\\Figure 22
\end{center}

\begin{proof}
Without loss of generality, it suffices to prove that $u_0\cap u_2=u_2\cap u_4$.
Assume that $u_0\cap u_2\neq u_2\cap u_4$.
Then
$u_0\cap u_2\subset u_2\cap u_4$ or $u_2\cap u_4\subset u_0\cap u_2$.
Since $R^{\pmb{J}}(u_0, u_2)$  and
$R^{\pmb{J}}(u_2, u_4)$ hold,
it follows that
$R^{\pmb{J}}(u_0, u_4)$ holds (see the dashed arrows in  Figure 22).
This contradicts the fact that $R^{\pmb{J}}(u_4, u_0)$ holds, by (4) in the definition of $\pmb{\overline{H}}$ (solid arrows  from $u_4$ to $u_0$ in Figure 22).
Thus, $u_0\cap u_2= u_2\cap u_4$.
It follows that
$T_{\pmb{J}}$ has exactly one splitting node, namely its stem.
\end{proof}

\begin{claim}\label{claim.6.6}
The subtree $T_{\pmb{\overline{I}}}$  of $T_{\pmb{G}}$ induced by $\pmb{\overline{I}}$ has exactly one  splitting node.
 \end{claim}

\begin{proof}
By Claim \ref{claim.6.5}, without loss of generality, it suffices to prove that $u_0\cap u_1=u_1\cap u_2$.
Assume that $u_0\cap u_1\neq u_1\cap u_2$.
Then either
$u_0\cap u_1\subset u_1\cap u_2$ or
$u_1\cap u_2\subset u_0\cap u_1$.
Since $u_0<u_1<u_2$ in the linear order $<$ on the universe of
$\pmb{F_{\max}}$,
it follows that the set $\{u_0,u_1,u_2\}$ has type  equal to one of the two types  in Figure 23 (the solid arrows).
Since $R^{\pmb{\overline{I}}}(u_0, u_2)$,
either
  $ R^{\pmb{\overline{I}}}(u_1, u_2)$ or
$R^{\pmb{\overline{I}}}(u_0, u_1)$ holds
(see the dashed arrows in Figure 23).
This contradicts the facts that $ R^{\pmb{\overline{I}}}(u_1, u_0)$ and $R^{\pmb{\overline{I}}}(u_2, u_1)$ hold,  by (5) and (6) of the definition of
$\pmb{\overline{H}}$.
Thus, the induced subtree $T_{\pmb{\overline{I}}}$ has exactly one
   splitting node.
\end{proof}

\begin{center}
\centering
\includegraphics[totalheight=2.2in]{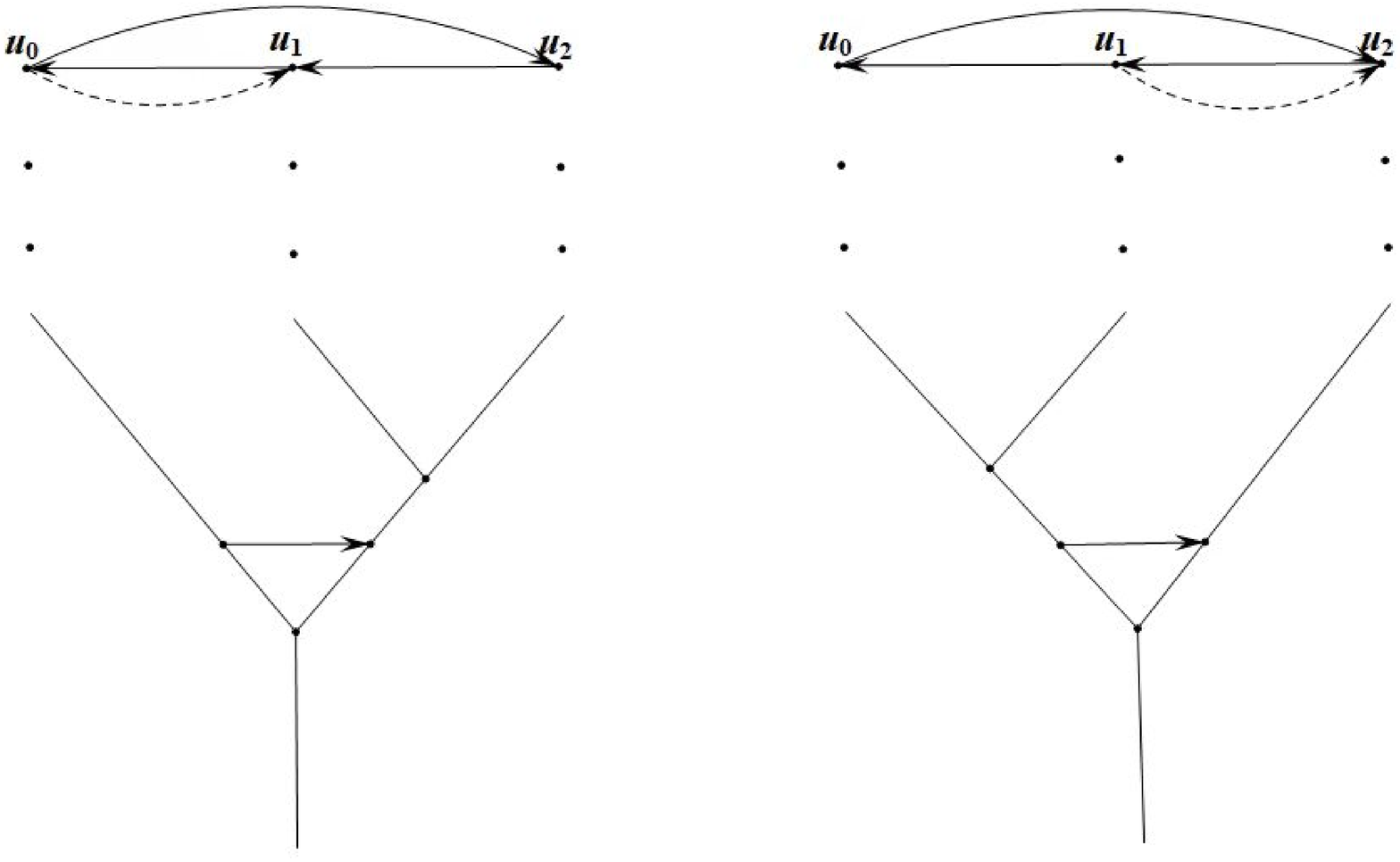}\\Figure 23
\end{center}

By Claim \ref{claim.6.6},   $\pmb{\overline{H}}$ satisfies the property in this lemma.
\end{proof}

Finally,
we prove a similar lemma for  the  class of finite
partial orders with a linear extension,
 $\mathcal{OPO}$.

\begin{lemma}\label{lem.6.7}
 Let $\mathcal{K}$ be $\mathcal{OPO}$.
    Then for each $\pmb{H}\in  \mathcal{K}$,
there is  a structure  $\pmb{\overline{H}}\in \mathcal{K}$ containing a copy of $\pmb{H}$
and $\pmb{\overline{H}}$  has the following property:
 Given a universal  inverse limit structure
 $\pmb{G}$ for $\mathcal{K}$
 contained in $\pmb{F_{\max}}$,
 every  copy $\pmb{\overline{I}}$ of $\pmb{\overline{H}}$ in
 $\pmb{G}$ has induced a subtree $T_{\pmb{\overline{I}}}$  of $T_{\pmb{G}}$ such that
 the type of $T_{\pmb{\overline{I}}}$  has exactly one splitting node.
 It follows that the
 immediate successors of stem$(T_{\pmb{\overline{I}}})$ in $T_{\pmb{\overline{I}}}$  have a copy of $\pmb{\overline{I}}$.
\end{lemma}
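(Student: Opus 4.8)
The plan is to mirror the structure of Lemmas \ref{lem.6.1} and \ref{lem.6.4}: I will exhibit an auxiliary structure $\pmb{\overline{H}}\in\mathcal{OPO}$ containing an induced copy of $\pmb{H}$, together with a rigidity argument forcing every copy of $\pmb{\overline{H}}$ in $\pmb{G}$ to branch at a single node. The workhorse will be the following \emph{rigid triple} observation for $\pmb{F_{\max}}$ when $\mathcal{K}=\mathcal{OPO}$: if $a<^{\pmb{G}}b<^{\pmb{G}}c$ are three elements of $\pmb{G}$ with $R^{\pmb{G}}(a,c)$ holding but with $a,b$ and $b,c$ each $R$-incomparable, then $a\cap b=b\cap c=a\cap c$, so $\{a,b,c\}$ has exactly one splitting node. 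Indeed, by the definition of the relations on $\pmb{F_{\max}}$, if the three meets were not all equal then either $a\cap b=a\cap c\subsetneq b\cap c$ or $b\cap c=a\cap c\subsetneq a\cap b$. In the first case $b$ and $c$ share the same immediate successor of $a\cap c$, whence $R^{\pmb{G}}(a,b)\Leftrightarrow R^{\pmb{G}}(a,c)$ and so $a,b$ would be comparable, a contradiction; the second case is symmetric, reading the relation from $c$'s side.

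The construction of $\pmb{\overline{H}}$ is where $\mathcal{OPO}$ genuinely departs from the previous two lemmas, and this is the main obstacle: because a partial order is transitive, the single interleaving $v_0,\dots,v_{2m}$ cannot be made rigid — two ``forward'' relations would force a third, while rigidity requires an incomparable middle vertex flanked by a comparable pair. I will therefore pad with a longer skeleton. Fix a linear extension $r_1<\dots<r_m$ of $\pmb{H}$ and arrange, in the linear order,
$$p_0,\,p_1,\,r_1,\,p_2,\,p_3,\,r_2,\,p_4,\,p_5,\dots,r_m,\,p_{2m},\,p_{2m+1}.$$
Equip the padding with the \emph{interleaved double chain}, letting $R^{\pmb{\overline{H}}}(p_i,p_j)$ hold iff $i<j$ and $i\equiv j\pmod 2$; let $r_1,\dots,r_m$ carry exactly the relations of $\pmb{H}$; and let every $r_j$ be $R$-incomparable to every $p_i$. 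This is a finite partial order whose displayed order is a linear extension, so $\pmb{\overline{H}}\in\mathcal{OPO}$, its restriction to $\{r_1,\dots,r_m\}$ is $\pmb{H}$, and (by universality of $\pmb{G}$) copies of $\pmb{\overline{H}}$ exist in $\pmb{G}$. The point of the layout is twofold: every $r_j$ has padding vertices of both parities on each of its two sides, and every pair $r_j,r_{j'}$ has a padding vertex strictly between them.

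Now let $\pmb{\overline{I}}$ be an arbitrary copy of $\pmb{\overline{H}}$ in $\pmb{G}$, and by abuse of notation write $p_i,r_j$ for the images of the corresponding vertices. I would argue in three stages. First, applying the rigid triple observation to each consecutive padding triple $(p_{i-1},p_i,p_{i+1})$ — whose ends are comparable (same parity) and whose middle is incomparable to both — gives $p_{i-1}\cap p_i=p_i\cap p_{i+1}$ for all $i$, so by telescoping all padding vertices split at one common node $s$, and hence $p_i\cap p_{i'}=s$ for all $i,i'$. Second, for a real $r_j$ and a padding vertex $p_i$, I choose a padding vertex $p_{i'}$ on the opposite side of $r_j$ of the same parity as $p_i$ (available by the layout) and apply the rigid triple observation to the triple with middle element $r_j$: its extreme padding vertices are comparable, $r_j$ is incomparable to both, so $r_j\cap p_i=p_i\cap p_{i'}=s$. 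Third, for reals $r_j<^{\pmb{G}}r_{j'}$ pick a padding vertex $p$ with $r_j<^{\pmb{G}}p<^{\pmb{G}}r_{j'}$; since $r_j\cap p=p\cap r_{j'}=s$ and the meet of the two extreme leaves is the shorter of the two consecutive meets, $r_j\cap r_{j'}=s$. Thus every pairwise meet in $\pmb{\overline{I}}$ equals $s$, so $T_{\pmb{\overline{I}}}$ has the single splitting node $s=\mathrm{stem}(T_{\pmb{\overline{I}}})$; and since all relations of $\pmb{\overline{I}}$ are read off at $s$ from $\pmb{F_s}$, the immediate successors of the stem in $T_{\pmb{\overline{I}}}$ carry a copy of $\pmb{\overline{I}}$, equivalently of $\pmb{\overline{H}}$.

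I expect the delicate point to be exactly the interaction with transitivity: it forbids both the ``three forward edges with a reversed long edge'' device used for $\mathcal{OT}$ and any attempt to alternate comparability along a single chain (which would collapse the reals into a total order). The double-chain skeleton is designed precisely to avoid this, since it always supplies, for each real and each padding vertex, a \emph{same-parity} — hence comparable — padding partner on the far side, so that every instance of rigidity is witnessed by a genuinely incomparable middle vertex without ever forcing a forbidden relation.
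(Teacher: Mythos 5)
Your proof is correct and takes essentially the same route as the paper's: the paper also pads $\pmb{H}$ with vertices forming two interleaved chains that have no $R$-relations to the copy of $\pmb{H}$ (there, the odd-indexed vertices with relations $R^{\pmb{\overline{H}}}(v_i,v_{i+4})$ plus two endpoint vertices), and its Claims 6.8 and 6.9 are precisely your rigid-triple argument, applied first to consecutive padding triples and then to triples whose middle element is an $\pmb{H}$-vertex flanked by two $R$-comparable padding vertices. The differences are cosmetic — the paper places one padding vertex between consecutive $\pmb{H}$-vertices ($2m+3$ vertices total) rather than your two ($3m+2$), and you should take the enumeration $r_1<\dots<r_m$ to be in the order $<^{\pmb{H}}$ so that the restriction of $\pmb{\overline{H}}$ to $\{r_1,\dots,r_m\}$ is genuinely isomorphic to $\pmb{H}$ as an ordered structure.
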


\begin{proof}
Fix any $\pmb{H}\in  \mathcal{K}$, and let
$m$ be the size of the universe of
 $\pmb{H}$.
If $\pmb{H}$ has universe of size one, then there is nothing to prove, so assume that the universe of  $\pmb{H}$ has size $m\ge 2$.
The relation $R$ here is a partial order, where $R(v,w)$ denotes that  $v$ is $R$-less than or equal to $w$.
We construct a finite ordered structure
$\pmb{\overline{H}}\in \mathcal{K}$ containing a copy of
$\pmb{H}$ as an induced substructure as follows:

\begin{enumerate}
\item
Let $\overline{H}=\{v_0,v_1,v_2, \dots , v_{2m+2}\}$.
\item
$\pmb{\overline{H}}\upharpoonright\{v_2, v_4,\dots, v_{2m}\}$ is isomorphic to $\pmb{H}$.

\item
$R(v_0,v_3)$  and $R(v_{2m-1},v_{2m+2})$ hold.
\item
 For  each $i \in \{1,3,\dots, 2m-3\}$,
$R^{\pmb{\overline{H}}}(v_i, v_{i+4})$ holds.
\item
The $R$-relations above are closed under transitivity of $R$, and no other $R$ relations are added.
\end{enumerate}
By (2),  $\pmb{\overline{H}}$ contains a copy of $\pmb{H}$.
Note that there are no $R$-relations between any vertices in $\{v_2,v_4,\dots,v_{2m}\}$ and any vertices $\{v_0,v_1,v_3,v_5,\dots,v_{2m+1},v_{2m}\}$.
For example, if $\pmb{H}\in \mathcal{OPO}$  with 3 vertices as in Figure 24, then $\pmb{\overline{H}}$  has 9 vertices as in Figure 25. The copy of $\pmb{H}$ in $\pmb{\overline{H}}$ is on vertices $\{v_2, v_4, v_6\}$.

\begin{center}
\centering
\includegraphics[totalheight=0.4in]{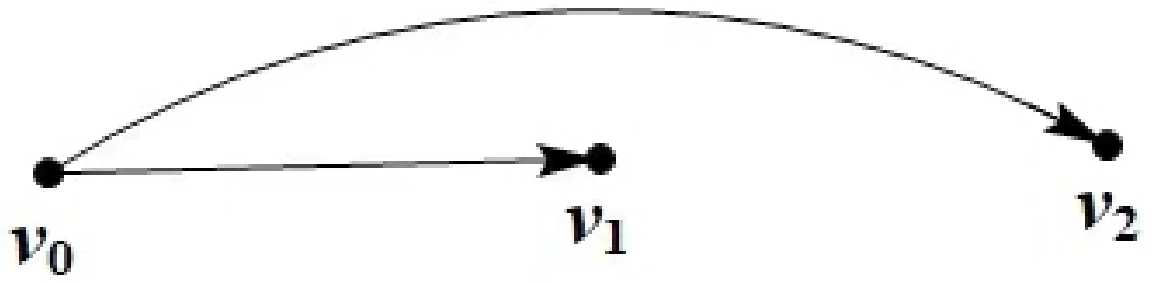}\\Figure 24
\end{center}

\begin{center}
\centering
\includegraphics[totalheight=1.2in]{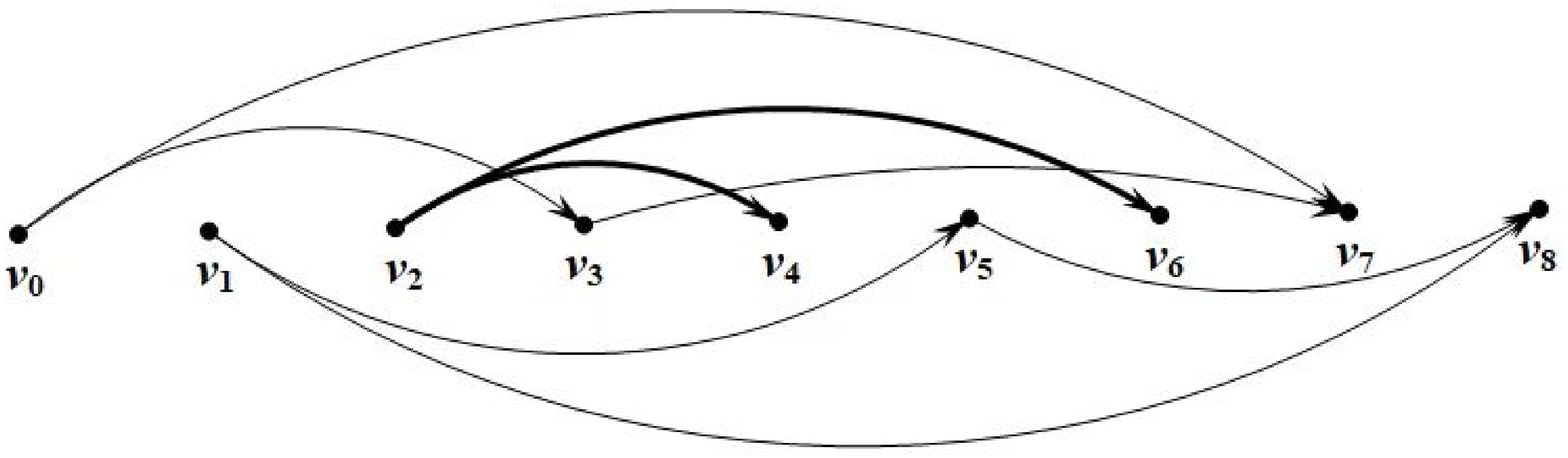}\\Figure 25
\end{center}

Now we check that $\pmb{\overline{H}}$ satisfies the property in this lemma.
Let  $\pmb{\overline{I}}$ be  any copy of $\pmb{\overline{H}}$ in $\pmb{G}$, say with universe $\overline{I}=\{u_0,u_1,u_2, \dots, u_{2m+2}\}$.

\begin{claim}\label{claim.6.8}
 Let $\pmb{J}$  be the  induced substructure
of $\pmb{\overline{I}}$ on universe $J=\{u_0,u_1,u_3, \dots,$ $u_{2m-1}, u_{2m+1}, u_{2m+2}\}$.
Then
the associated  subtree $T_{\pmb{J}}$ of $T_{\pmb{G}}$ has exactly one splitting node.
\end{claim}

\begin{proof}
It suffices to
prove that
any three successive vertices in $J$ have the same meet.
Without loss of generality, it suffices to  prove that
$u_0\cap u_1=u_1\cap u_3$,
as the same argument  shows that
for any  $0\le i\le m-2$,
$u_{2i+1}\cap  u_{2i+3} = u_{2i+3}\cap u_{2i+5}$, and that
$u_{2m-1}\cap u_{2m+1}= u_{2m+1}\cap u_{2m+2}$.

Assume that $u_0\cap u_1\neq u_1\cap u_3$.
Then
$u_0\cap u_1\subset u_1\cap u_3$ or $u_1\cap u_3\subset u_0\cap u_1$.
Note that  $R^{\pmb{J}}(u_0, u_3)$ holds
 and
$\neg R^{\pmb{J}}(u_0, u_1)$  and $\neg R^{\pmb{J}}(u_1, u_3)$ hold.
If
 $u_0\cap u_1\subset u_1\cap u_3$
then  $R^{\pmb{J}}(u_0,u_1)$, a contradiction.
If
$u_1\cap u_3\subset u_0\cap u_1$, then
$R^{\pmb{J}}(u_1,u_3)$,  also a contradiction
 (see   Figure 26).
Thus, $u_0\cap u_1= u_1\cap u_3$.
It follows that
$T_{\pmb{J}}$ has exactly one splitting node, namely its stem.
\end{proof}

\begin{center}
\centering
\includegraphics[totalheight=2in]{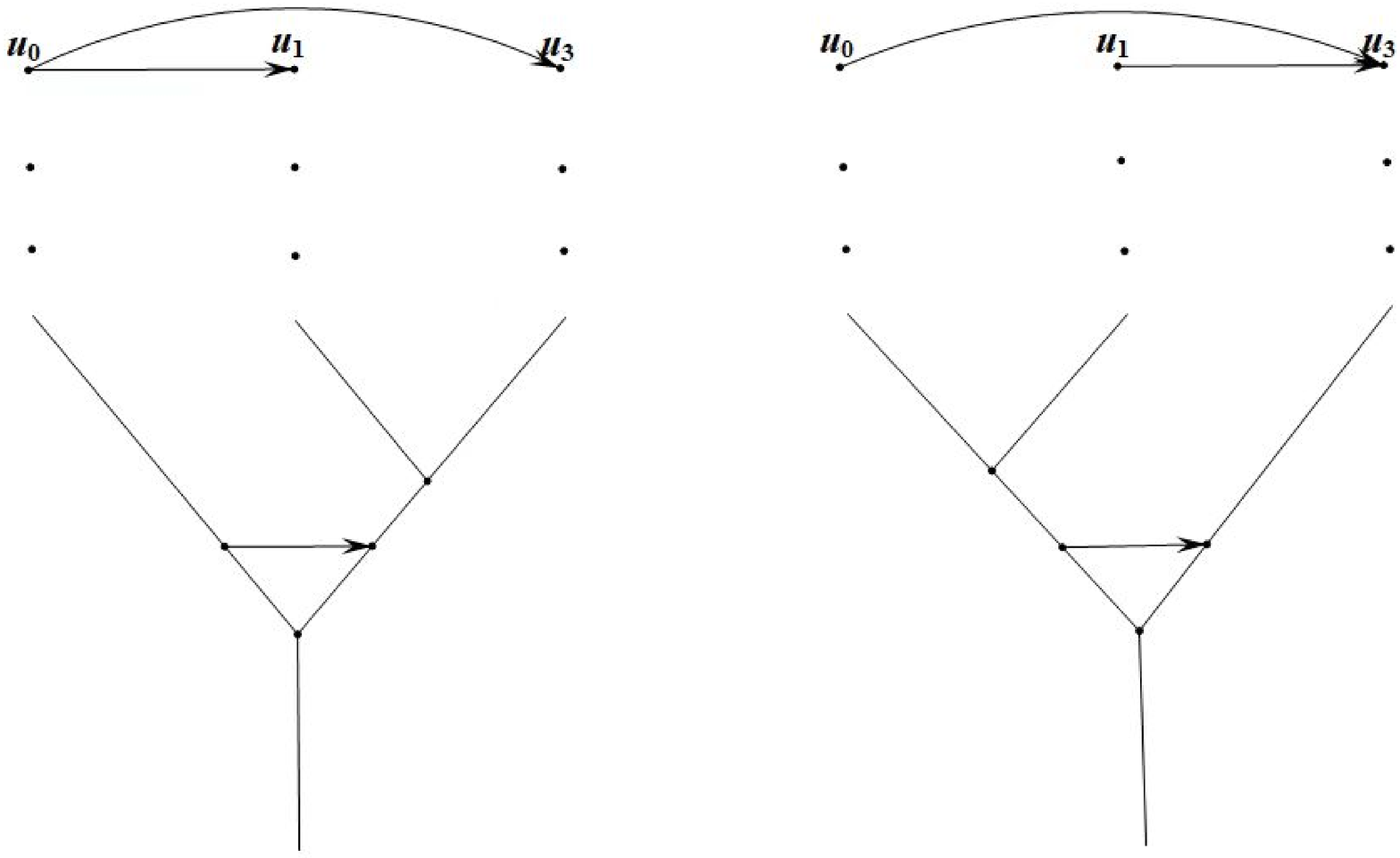}\\Figure 26
\end{center}

\begin{claim}\label{claim.6.9}
The subtree $T_{\pmb{\overline{I}}}$  of $T_{\pmb{G}}$ induced by $\pmb{\overline{I}}$ has exactly one  splitting node.
 \end{claim}

\begin{proof}
We will first prove that
for all $0\le i\le m-2$,
$u_{2i+1}\cap u_{2i+2}=u_{2i+2}\cap u_{2i+5}$,
and that
 $u_{2m-1}\cap u_{2m}=u_{2m}\cap u_{2m+2}$.
Since the argument is the same for each of these cases,
 it suffices to prove that  $u_1\cap u_2= u_2\cap u_5$.
In fact, the  same argument  as that  in Claim \ref{claim.6.8} applies here, for the structure
$\pmb{\overline{I}}$ restricted to the vertices $\{u_1,u_2, u_5\}$ is
isomorphic to the one in the proof of Claim \ref{claim.6.8}:
 $R^{\pmb{\overline{I}}}(u_1,u_5)$,
$\neg R^{\pmb{\overline{I}}}(u_1,u_2)$, and
$\neg R^{\pmb{\overline{I}}}(u_2,u_5)$.
Since the linear order $<$ extends $R^{\pmb{\overline{I}}}$,
$\neg R^{\pmb{\overline{I}}}(u_2,u_1)$,
$\neg R^{\pmb{\overline{I}}}(u_5,u_1)$, and
$\neg R^{\pmb{\overline{I}}}(u_5,u_2)$
all hold.
Therefore,  $u_1\cap u_2= u_2\cap u_5$ (see Figure 27).

Given  any three vertices $u_{2i+1}, u_{2i+2}, u_{2i+3}$, where $0\le i\le m-1$,
let $w=u_{2i+5}$ if $i<m-1$ and $w= u_{2m+2}$ if $i=m-1$.
By the above argument,
$$
u_{2i+1}\cap u_{2i+2}= u_{2i+2}\cap w=u_{2i+1}\cap w.
$$
By Claim \ref{claim.6.8},
$$
u_{2i+1}\cap u_{2i+3}=u_{2i+3}\cap w=u_{2i+1}\cap w.
$$
Therefore,
$$
u_{2i+1}\cap u_{2i+2} =
u_{2i+1}\cap w=
  u_{2i+2}\cap u_{2i+3}.
$$
Hence, also $u_{2i+1}\cap u_{2i+2}= u_{2i+1}\cap u_{2i+3}$.
Thus, any three successive vertices in
$\pmb{\overline{I}}$ have a common meet, meaning that
$\pmb{\overline{I}}$ has exactly one
   splitting node.
\end{proof}

\begin{center}
\centering
\includegraphics[totalheight=2.2in]{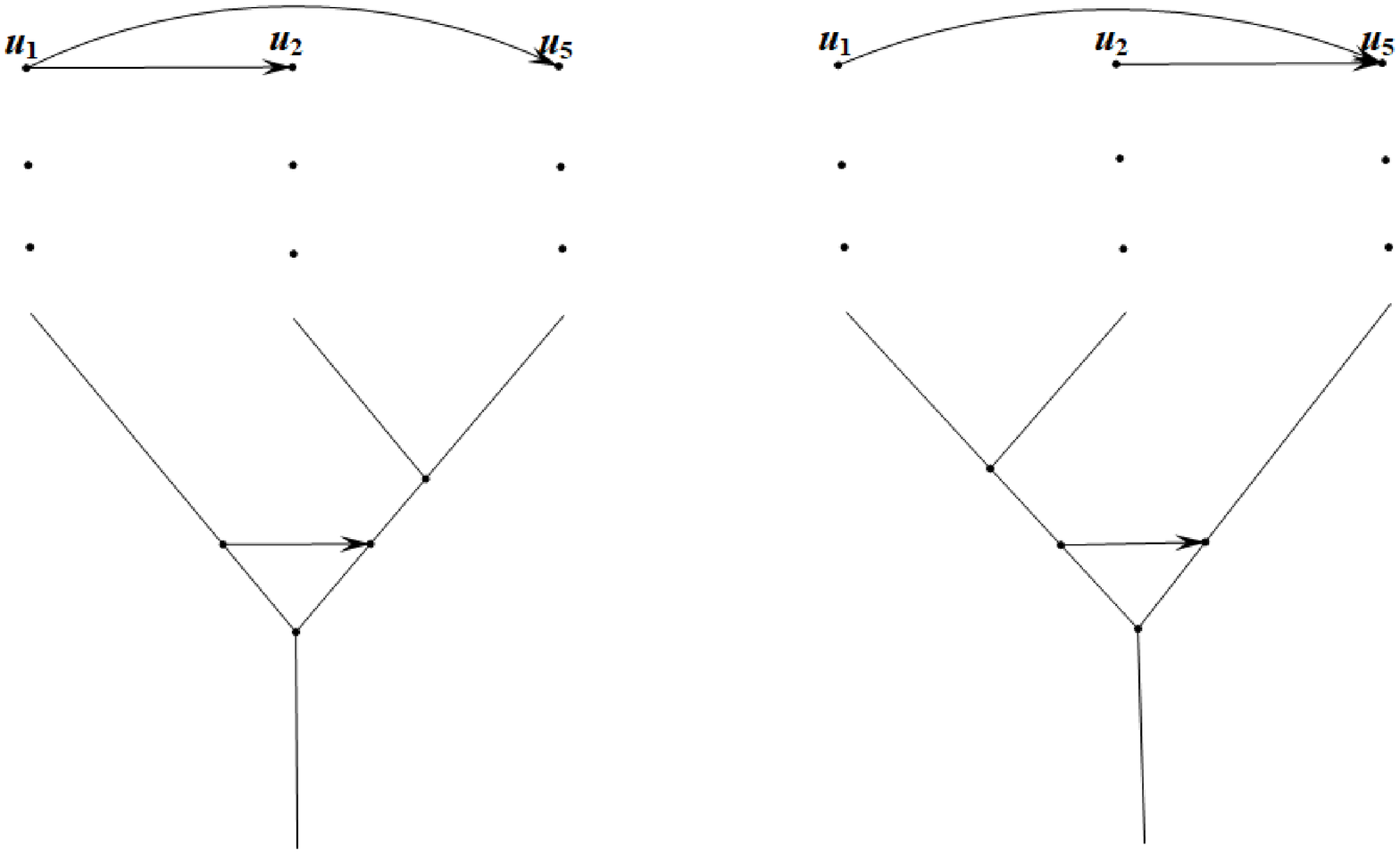}\\Figure 27
\end{center}

By Claim \ref{claim.6.9},   $\pmb{\overline{H}}$ satisfies the property in this lemma.
\end{proof}


The next theorem shows that the upper bounds proved in
Theorem \ref{thm.5.9} are exact for ordered binary  relational free amalgamation classes,  for ordered tournaments, and  for partial orders with a linear extension.

\begin{theorem}\label{thm.main}
Let $\mathcal{K}$ be any
\Fraisse\ class
in an ordered binary relational signature
such that either  $\mathcal{K}$ has  free amalgamation
or $\mathcal{K}$ is one of
$\mathcal{OT}$ or $\mathcal{OPO}$.
Let  $\pmb{G}$ be
 a  universal inverse limit structure  for  $\mathcal{K}$ contained in
 $\pmb{F_{\max}}$.
Then for each $\pmb{H}\in \mathcal{K}$, each type representing $\pmb{H}$  in $\pmb{G}$ persists in each closed subcopy  of $\pmb{G}$.
It follows that
 the  big Ramsey degree $T(\pmb{H}, \pmb{F_{\max}})$
for finite Baire-measurable colorings of
$\big(\mathop{}_{ \ \ \pmb{H}}^{\pmb{F_{\max}}}\big)$
is exactly the number of types in $T_{\max}$ representing a copy of $\pmb{H}$.
\end{theorem}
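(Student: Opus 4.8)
The plan is to establish the two matching bounds separately. The upper bound $T(\pmb{H},\pmb{F_{\max}})\le$ (number of types representing $\pmb{H}$) is already supplied by Theorem \ref{thm.5.9}, so the entire content lies in the matching lower bound, and this is exactly the persistence assertion. Indeed, if one colors each copy of $\pmb{H}$ by its type, then a closed subcopy uses precisely the number of types realized in it; thus once we know every type persists, this coloring witnesses that no subcopy can use fewer than the full number of types. Hence it suffices to prove: for every type $\tau$ representing $\pmb{H}$ and every $\pmb{F_{\max}}$-subtree $S$ of $T_{\pmb{G}}$, the structure $\pmb{F(S)}$ contains a copy of $\pmb{H}$ of type $\tau$.

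I would prove persistence by induction on the number $p$ of splitting nodes of the meet tree of $\tau$. For the base case $p=1$, all branches of a copy diverge at the immediate successors of a single node. Here I apply the relevant gadget lemma, namely Lemma \ref{lem.6.1}, \ref{lem.6.4}, or \ref{lem.6.7} according to whether $\mathcal{K}$ has free amalgamation, is $\mathcal{OT}$, or is $\mathcal{OPO}$, to the structure $\pmb{H}$ to obtain $\pmb{\overline{H}}\in\mathcal{K}$. Since $S$ is an $\pmb{F_{\max}}$-tree, there is $s\in S$ with $\pmb{\overline{H}}$ embedding into $\pmb{F^S_s}$; by the lemma this copy is forced to have a single splitting node, so the immediate successors of $s$ carry a copy of $\pmb{\overline{H}}$, and the induced copy of $\pmb{H}$ has the (unique) one-splitting-node type $\tau$.

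For the inductive step, let $\tau$ have $p+1\ge 2$ splitting nodes and let $w^{*}$ be a deepest splitting node, above which sit $q\ge 2$ leaves. These $q$ leaves are \emph{externally homogeneous}: any two of them agree up to $w^{*}$, so each has the same meet with every other vertex, and hence, by the definition of the relations on $\pmb{F_{\max}}$, the same $R_{\ell}$-relations to every external vertex. Passing to the induced substructure on the remaining vertices together with one representative of these leaves therefore yields $\pmb{H}'\le\pmb{H}$ whose type $\tau'$ has exactly $p$ splitting nodes. By the induction hypothesis $\pmb{F(S)}$ contains a copy of $\pmb{H}'$ of type $\tau'$; let $v^{*}$ be the branch realizing the representative vertex. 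I then re-split $v^{*}$: working in the $\pmb{F_{\max}}$-subtree of $S$ above a node $s\supseteq v^{*}$ chosen deeper than every meet already used, I apply the appropriate gadget lemma to the $q$-vertex structure carried by the original leaves and realize it at the immediate successors of a single node. The $q$ new branches pass through $v^{*}$, so they inherit $v^{*}$'s (correct) external relations, they carry the prescribed internal relations coming from the gadget, and their common meet $s$ sits at the prescribed depth; hence the resulting copy of $\pmb{H}$ has type $\tau$.

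The main obstacle is the inductive step, specifically the realizability of this local blow-up: one must know that the $q$-vertex internal structure, attached homogeneously to the already-built configuration, is again a structure of $\mathcal{K}$ and that a single clean meet can be forced. This is exactly where the hypothesis on $\mathcal{K}$ enters and why the three families require three separate lemmas — free amalgamation guarantees that a homogeneous amalgam introduces no forbidden configuration, whereas for $\mathcal{OT}$ and $\mathcal{OPO}$ the structures $\pmb{\overline{H}}$ were built by hand in Lemmas \ref{lem.6.4} and \ref{lem.6.7} to respect tournament totality and, respectively, transitivity of the partial order together with its linear extension. A secondary point requiring care is the bookkeeping of meet depths: one must place the successive meets so as to respect the full meet order prescribed by $\tau$ (including any prescribed coincidences of depth), which is arranged by processing the splitting nodes in order of increasing depth. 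Combining the persistence statement, valid for all three families, with the upper bound of Theorem \ref{thm.5.9} then gives that $T(\pmb{H},\pmb{F_{\max}})$ equals the number of types in $T_{\max}$ representing $\pmb{H}$.
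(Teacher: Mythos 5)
Your overall architecture matches the paper's: induction on the number of splitting nodes, deleting a deepest splitting node while keeping one representative vertex, re-splitting via the gadget structures of Lemmas \ref{lem.6.1}, \ref{lem.6.4}, \ref{lem.6.7}, and finally combining the (continuous, hence Baire-measurable) type coloring with Theorem \ref{thm.5.9}. However, there is a genuine gap at your very first step, the reduction ``it suffices to prove: for every type $\tau$ and every $\pmb{F_{\max}}$-subtree $S$ of $T_{\pmb{G}}$, the structure $\pmb{F(S)}$ contains a copy of $\pmb{H}$ of type $\tau$.'' The theorem quantifies over \emph{closed subcopies} of $\pmb{G}$, that is, over arbitrary universal inverse limit structures contained in $\pmb{G}$, and such a subcopy is not presented by an $\pmb{F_{\max}}$-subtree: the universality condition only guarantees that each finite structure of $\mathcal{K}$ has copies in every interval, with meets possibly scattered across many levels; it does not hand you a node $s$ of the subcopy's tree with $\pmb{F}$ embedded into the structure on the immediate successors of $s$. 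So the statement you reduce to does not, on its face, cover the subcopies the theorem is about, and justifying the reduction (i.e., showing that the tree of any universal subcopy for these classes does have the $\pmb{F_{\max}}$-tree property) is essentially the whole difficulty, not something that can be assumed.

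This mislocation of the difficulty is visible in how you invoke the gadget lemmas. Once you assume $S$ is an $\pmb{F_{\max}}$-tree and embed $\pmb{\overline{H}}$ into $\pmb{F^S_s}$, the copy lies on the immediate successors of $s$ by construction, so ``by the lemma this copy is forced to have a single splitting node'' is vacuous --- in your setting you could embed $\pmb{H}$ itself and never mention $\pmb{\overline{H}}$. In the paper's proof the lemmas do the opposite, and essential, job: inside an arbitrary universal subcopy, universality produces \emph{some} copy $\pmb{\overline{I}}$ of the gadget $\pmb{\overline{F}}$ in the interval $N_u$ above the node $u$ realizing the truncated type, and the relations built into $\pmb{\overline{F}}$ \emph{force} $T_{\pmb{\overline{I}}}$ to have exactly one splitting node $t$, whose immediate successors then carry a copy of $\pmb{F}$; this substitutes for the $\pmb{F_{\max}}$-tree property you assumed. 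Your inductive step is otherwise the same as the paper's (the paper truncates the type at its longest splitting node instead of keeping a representative branch --- an inessential difference), and the slips $s\supseteq v^{*}$ (should be $s\subseteq v^{*}$) and ``the new branches pass through $v^{*}$'' (they pass through $s$ and agree with $v^{*}$ up to $s$) are cosmetic. One further caution: your parenthetical claim that ``prescribed coincidences of depth'' among meets can be arranged by processing splitting nodes in order of increasing depth is unsupported by your construction, which always places each new meet strictly deeper than all previous ones.
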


\begin{proof}
Let $\pmb{G}\subseteq \pmb{F_{\max}}$ be a universal  inverse limit structure for $\mathcal{K}$.
We will prove by induction on the number of splitting nodes that every type $\tau$ for each finite structure in $\mathcal{K}$  persists in $\pmb{G}$.

Suppose  $\tau$  is a type for a structure in $\mathcal{K}$ which has no  splitting nodes.
Then $\tau$ codes a single element,
so there is a copy of $\tau$ in $T_{\pmb{G}}$.

Now assume  that $n\ge 1$ and  for each  type $\tau$ with less than $n$ many splitting nodes,
the type $\tau$ appears in $T_{\pmb{G}}$.
For the induction step,
let $\tau$ be a type for a structure in $\mathcal{K}$  with exactly $n$ splitting nodes.
Let  $s$ denote  the  splitting node of longest length in $\tau$, and
 let $\sigma=\tau\upharpoonright |s|$.
 By the induction hypothesis, there is a copy of $\sigma$ in $T_{\pmb{G}}$.
  Then there is a subtree $U$ of $T_{\pmb{G}}$ such that $U$ has type $\sigma$.
 Let $\varphi : \sigma\longrightarrow U$ be the strong isomorphism  from $\sigma$ to $U$,
 and let $u=\varphi(s)$.

 Suppose that $\pmb{F}$ is the finite  structure in $\mathcal{K}$ at the immediate successors of $s$ in $\tau$.
 Let
  $\pmb{\overline{F}}\in \mathcal{K}$
  be the structure
  containing a copy of $\pmb{F}$  satisfying the properties in Lemma \ref{lem.6.1}, \ref{lem.6.4}, or \ref{lem.6.7}, respectively.
   Since $\pmb{G}$ is a universal  inverse limit structure for $\mathcal{K}$, there is a copy
   $\pmb{\overline{H}}$ of $\pmb{\overline{F}}$  in the open interval $N_u$.
Taking  $t=\mbox{stem}(T_{\pmb{\overline{H}}})$,
then  succ$_{T_{\pmb{\overline{H}}}}(t)$ contains a copy of $\pmb{\overline{F}}$,
and thus succ$_{T_{\pmb{\overline{H}}}}(t)$
contains a copy $\pmb{H}$  of $\pmb{F}$.
Let $X$ denote the set of nodes  in succ$_{T_{\pmb{\overline{H}}}}(t)$ forming the universe of
$\pmb{H}$.
Let $Y$ be a set of nodes $\{y_z:z\in U\setminus \{u\}\}$  of length $|t|+1$ such that for each  $z\in U\setminus \{u\}$,
$y_z\supseteq z$.
Then $U\cup \{t\}\cup X\cup Y$ is
 a copy of $\tau$.
Hence, $\tau$ persists in $\pmb{G}$.

Now
given  a structure $\pmb{H}\in\mathcal{K}$,
 let $\tau_0,\dots,\tau_{m-1}$ list the collection of all types
 for copies of $\pmb{H}$  in $\pmb{F_{\max}}$.
Let $c :\big(\mathop{}_{ \ \ \pmb{H}}^{\pmb{F_{\max}}}\big)\longrightarrow m$
be defined by $c(\pmb{J})=i$ if and only if $T_{\pmb{J}}$ has type $\tau_i$.
Note that $c$ is in fact continuous, hence Baire-measurable.
By the above argument, there is a substructure $\pmb{G}$ of $\pmb{F_{\max}}$ which is again a universal inverse limit structure with the property that for each $i<m$, there is a copy of $\pmb{H}$ in $\pmb{G}$ with type $\tau_i$.
Therefore,  all colors $i<m$ persist in $\pmb{G}$.
Therefore, $T(\pmb{H},\pmb{F_{\max}})\ge m$.

By Theorem \ref{thm.5.9},  we know that $T(\pmb{H},\pmb{F_{\max}})\le m$ for finite Baire-measurable colorings.\
Therefore, $T(\pmb{H},\pmb{F_{\max}})$ is exactly the number of types  associated to $\pmb{H}$.
\end{proof}

\begin{remark}
Theorem \ref{thm.main} characterizes the exact big Ramsey degrees
under the finite Baire-measurable colorings
for some ordered structures with one (non-order) binary relation.
It seems likely that similar methods can be developed
to characterize the exact big Ramsey degrees for all structures considered in this paper in terms of the number of types.
\end{remark}

\vspace{0.2cm}

{\bf Acknowledgements}\ \  The authors would like to thank the anonymous referee for  carefully checking the original draft and giving us many helpful
suggestions for improvements.

\bibliographystyle{amsplain}

\end{document}